\newtheorem{assumption}{Assumption}
\newenvironment{ldescription}[1]
  {\begin{list}{}
   {\renewcommand\makelabel[1]{##1\hfill}
   \settowidth\labelwidth{\makelabel{#1}}
   \setlength\leftmargin{\labelwidth}
   \addtolength\leftmargin{\labelsep}}}
  {\end{list}}
\definecolor{purple}{rgb}{0.6, 0.2, 0.8}
\definecolor{darkred}{rgb}{0.7, 0.0, 0.0}
\definecolor{darkgreen}{rgb}{0.0, 0.5, 0.0}
\definecolor{lightblue}{rgb}{0.68, 0.85, 0.9}
\title{Towards Reliability-Aware Active Distribution System Operations: A Sequential Convex Programming Approach}
\titlerunning{Towards Reliability-Aware Active Distribution System Operations}
\author{Gejia Zhang \thanks{gejia.zhang@rutgers.edu}and Robert Mieth \thanks{robert.mieth@rutgers.edu}}
\institute{Department of Industrial and System Engineering, Rutgers University, 96 Frelinghuysen Road, Piscataway, 08854, NJ, USA}
\date{}
\authorrunning{G. Zhang and R. Mieth}
\begin{document}

\tcbset{colframe=black,colback=white,boxrule=0.05mm,arc=2mm,outer arc=2mm}

\maketitle

\begin{abstract}
The increasing demand for electricity and the aging infrastructure of power distribution systems have raised significant concerns about future system reliability. Failures in distribution systems, closely linked to system usage and environmental factors, are the primary contributors to electricity service interruptions. The integration of distributed energy resources (DER) presents an opportunity to enhance system reliability through optimized operations. 
This paper proposes a novel approach that explicitly incorporates both decision- and context-dependent reliability into the optimization of control setpoints for DERs in active distribution systems. The proposed model captures how operational decisions and ambient temperature impact the likelihood of component failures, enabling a balanced approach to cost efficiency and reliability. By leveraging a logistic function model for component failure rates and employing a sequential convex programming method, the model addresses the challenges of non-convex optimization under decision-dependent uncertainty. Numerical case study on a modified IEEE $33$-bus test system demonstrates the effectiveness of the model in dynamically adjusting power flows and enhancing system robustness under varying environmental conditions and operational loads. The results highlight the potential of DERs to contribute to distribution system reliability by efficiently managing power flows and responding to fluctuating energy demands.
\end{abstract} \\

\noindent \textbf{Keywords:} Decision-dependent uncertainty, non-convex optimization, optimal power flow, power system reliability

\newpage

\section{Introduction}

The global demand for electricity is projected to increase to approximately 150\% by 2035 (\cite{emmanuel2017}), driven by economic growth and the concurrent electrification of traditionally fossil-fueled loads such as cars and building heating (\cite{griffith2022electrify,zhang2019data,andrews2022data}).
This anticipated surge in electricity demand in combination with aging infrastructure has raised concerns about the reliability and adequacy of power distribution systems in the future (\cite{njbpu2022gridmod}). 
This concern is amplified by the fact that failures in distribution systems account for the majority of interruptions in electricity service with a distinct correlation to system usage and environmental factors (\cite{richard_epdr_3,campbell2012weather}).

The reliability of electric distribution systems has been considered predominantly from a planning perspective, i.e., long-term investments in protective equipment (\cite{izadi2019optimal,billinton1996optimal}), power quality control equipment (\cite{baran1989optimal}), and line hardening (\cite{milad2023,wang2017robust}), or from the perspective of system \textit{resilience}, i.e., preparation for, mitigation of, and recovery from extreme events (\cite{kim2018enhancing,wang2016resilience,hanchen2022,bajpai2016novel}). 
At the same time, efforts to abate fossil fuel-based electric power generation has introduced a growing fleet of distributed energy resources (DERs) at the distribution level, including
wind, solar, and other controllable energy systems like distributed generators (DGs), battery storage, and demand response (DR).
This development has sparked the interest of utilities to explore ``\textit{DER enabled reliability}'' (\cite{njbpu2022gridmod}) as an option for distributed resources to positively impact system reliability at the operational stage.  
This paper explores this option under the lens of decision-dependent and contextual reliability for active distribution system operations.

Most research on reducing service interruptions in the distribution system focuses on planning system reliability (e.g., \cite{izadi2019optimal,billinton1996optimal,baran1989optimal,milad2023}) or on system resilience in anticipation of and during extreme events (e.g., \cite{wang2017robust,kim2018enhancing}). 
In addition to the \textit{planning} tools described in the references above,
there are some approaches that discuss event-driven reliability enhancements in the \textit{operational} stage.
For example, \cite{kim2018enhancing} and \cite{bajpai2016novel} propose tools for the optimal deployment of DGs and mobile Battery Energy Storage Systems (BESSs), respectively, so that the expected cost of unserved energy during an anticipated extreme event is minimized.
As a different approach, \cite{wang2016resilience} and \cite{hanchen2022} discuss a combination of preventive generator dispatch, load shedding, and topology switching to minimize unserved energy during some critical scenarios.

Apart from the use of DERs for emergency services as in \cite{kim2018enhancing} or \cite{bajpai2016novel}, the connection between DER operation and distribution system reliability has previously been considered more implicitly by ensuring that technical safety limits, including voltage and thermal equipment limits, are maintained. 
This objective has gained importance over the past years because the generation of electric power at distribution level, especially from weather-dependent wind and solar resources complicates maintaining voltage limits (\cite{nasif2016,dall2017chance}), accelerates equipment degradation (\cite{andrianesis2021optimal}), and amplifies the effect of temperature-dependent resource properties on system operations (\cite{moein2016}).
Various options to ensure the safe operation of distribution systems with a high penetration of (renewable) DERs have been proposed, including the optimal placement and control of BESSs (\cite{yuqing2018}), controllable DGs (\cite{mieth2018data}), and DR programs (\cite{mieth2019online,jamshid2013}).
However, keeping system operations within predefined safety limits as in the references mentioned above offers no explicit insight into relevant reliability metrics and how they are affected by system operations.
Such considerations have been discussed for transmission systems, e.g., using weather-dependent equipment failure rates to improve unit commitment (\cite{miguel2016}), analyze resilience during extreme weather events (\cite{mathaios2016,abdin2019}), or optimize decision-dependent outage risk (\cite{mieth2022risk}).

The approach proposed in this paper \textit{explicitly} considers distribution system reliability when optimizing control setpoints for DERs by considering component failure rate as a function of operational context. 
Contextual reliability has been discussed by previous works with a focus on how component failure rates depend on different weather variables. 
For example,
\cite{billinton2005consideration} and \cite{billinton2006distribution} propose forecasting models for component failure and repair rates under varying weather conditions via a Markov approach.  
Similarly, \cite{hani2023} introduce a weather-aware predictive reliability assessment method for distribution systems. 
Using empirical data, \cite{luca2022} propose a classifier for defining heatwaves based on changes in the reliability of distribution system components during high-temperature events. 
The study models the impact of heatwaves on the component reliability, along with other covariates, using formulations adopted from the Cox proportional hazards model~(\cite{cox1972}). 

In addition to context from external factors such as weather, in this paper we propose a novel approach that also considers \textit{decision-dependent reliability}, i.e., component failure rate as a function of their utilization. 
To the best of our knowledge, this is the first paper to explore this approach.
Similarly to \cite{luca2022}, we use logistic regression that results in a component reliability model that is equivalent to the Cox proportional hazards model within a reasonable approximation. This allows us to internalize the contextual reliability of the components of a radial distribution system in the objective of a decision-making problem that co-optimizes its cost and reliability.
Solving the resulting problem is challenging because it is a non-convex optimization problem with decision-dependent uncertainty. 
To overcome this, we use a sequential convex programming approach that iterates between the exact non-convex expression of component reliability and its convex linear approximation.
Considering the impact of ambient temperature and net power demand through each bus and line on system reliability, our model ensures robustness under varying environmental conditions and operational loads. It demonstrates its ability to balance operational and the cost of expected-energy-not-served.
Furthermore, with its multiple time-step analysis, the model allows the distribution system to dynamically adjust power flows according to different times of the day and varying power demands as well as varying ambient temperature, enhancing the system's ability to meet fluctuating energy requirements efficiently. 

The paper is structured as follows. Section~\ref{sec:2_notation} lists the required notations.
Section~\ref{sec:3_math_model} describes our model of an active distribution grid with an AC power flow formulation and a baseline cost-minimizing objective.
Next, it introduces our approach to modeling and internalizing reliability into the model objective and outlines the proposed iterative solution algorithm.
Section~\ref{sec:4_case_study} details our case study, including data description, parameter estimation, and results.
Section~\ref{sec:conclusion} concludes the paper and provides future research directions. 

\section{Nomenclature} \label{sec:2_notation}

\noindent \textit{A. Superscripts}
\begin{ldescription}{$\rm{Pr}_{0}
(t)/\rm{Pr}_{i}^{\diamond}(t)xxx$}
\setlength{\itemsep}{0pt}
    \item [$\text{B}$] BESS.
    \item [$\text{B,c}$] BESS charging.
    \item [$\text{B,d}$] BESS discharging.
    \item [$\text{b}$] Bus.
    \item [$\text{DG}$] Distributed generation unit.
    \item [$\text{DR}$] Demand response.
    \item [$\text{l}$] Line.
\end{ldescription}

\noindent \textit{B. Sets and Indices}
\begin{ldescription}{$\rm{Pr}_{0}(t)/\rm{Pr}_{i}^{\diamond}(t)xxx$}
\setlength{\itemsep}{0pt}
    \item [$\mathcal{A}_i$] Singleton set with upstream (ancestor) bus of bus $i$.
    \item [$\mathcal{MCS}_{{b}_i}$] Minimum cut-set of lines connection the substation and bus $i$.
    \item [$\mathcal{N}$] Set of buses defined as $\mathcal{N} = \{0, 1, \cdots, N\}$.
    \item [$\mathcal{N}^+$] Set of buses without substation, $\mathcal{N}^+ = \mathcal{N}\setminus\{0\}$.
    \item [$\mathcal{N}_{\diamond}$] Subset of buses that host $\diamond = \{ \text{BESS}, \text{DG}, \text{DR} \}$.
    \item [$\mathcal{S}c_i$] Set of downstream (children) buses of bus $i$.
    \item [$\mathcal{T}$] Set of modeled time steps, indexed by $t$. (Time is modeled discretely and any value related to a time step $t$ represents the value at the end of the discrete time step $t$.)
\end{ldescription}

\noindent Indices $i \in \mathcal{N}^{+}$ refer to non-substation buses and lines towards these buses. \\
Indices $t\in\mathcal{T}$ enumerate time steps.\\

\noindent \textit{C. Variables} \\
\begin{ldescription}{$\rm{Pr}_{0}(t)/\rm{Pr}_{i}^{\diamond}(t)xxx$}
\setlength{\itemsep}{0pt}
    \item [$f_{i, t}^p/f_{i, t}^q$] Active/reactive power flow from bus $\mathcal{A}_i$ to bus $i$ (p.u.).
    \item [$l_{i, t}$] Square of current magnitude in line $i$ (p.u.).
    \item [$p_{0, t}/q_{0, t}$] Active/reactive power provided by substation (p.u.).
    \item [$p_{i, t}^{\diamond}/q_{i, t}^{\diamond}$] Active power provided by $\diamond$, $\diamond = \{\text{DG}, \text{(B,c)}, \text{(B,d)}, \text{DR} \}$  (p.u.).
    \item [$p_{i, t}^{\diamond}/q_{i, t}^{\diamond}$] Reactive power provided by $\diamond$, $\diamond = \{\text{DG}, \text{DR} \}$  (p.u.).
    \item [$\tilde{p}_{0, t}^{\rm b}$] Auxiliary variable for modeling $\left| p_{0, t} \right|$.
    \item [$\tilde{p}_{i, t}^{\rm b}$] Auxiliary variable for modeling $\left| - p_{i, t}^{\rm c} + p_{i, t}^{\rm DG} - p_{i, t}^{\rm B, c} + p_{i, t}^{\rm B, d} + p_{i, t}^{\rm DR} \right|$.
    \item [$\bm{p}_{i,t}$] Active power defined as a column vector $\bm{p}_{i, t} \!\! = \!\! (p_{i, t}^{\text{DG}}\!, p_{i, t}^{\text{B,c}}\!, p_{i, t}^{\text{B,d}}\!, p_{i, t}^{\text{DR}})^T$.
    \item [${\rm {SOC}}_{i, t}^{\text{B}}$] BESS state-of-charge (\%).
    \item [$u_{i, t}^{\text{B}}$] Binary BESS charging indicator. \newline
    (If $u_{i, t}^{\text{B}} = 1$ then $p_{i, t}^{\text{B,c}},q_{i, t}^{\text{B,c}}\ge 0$ and $p_{i, t}^{\text{B,d}},q_{i, t}^{\text{B,d}}= 0$; If $u_{i, t}^{\text{B}} = 0$ then $p_{i, t}^{\text{B,c}},q_{i, t}^{\text{B,c}} = 0$ and $p_{i, t}^{\text{B,d}},q_{i, t}^{\text{B,d}} \ge 0$.)
    \item [$u_{i, t}^{\text{DR}}$] Binary DR usage indicator. \newline
    (If $u_{i, t}^{\text{DR}} = 1$ then $p_{i, t}^{\text{DR}},q_{i, t}^{\text{DR}}\geq 0$; if $u_{i, t}^{\text{DR}} = 0$ then $p_{i, t}^{\text{DR}},q_{i, t}^{\text{DR}} = 0$.)
    \item [$v_{0, t}$] Square of voltage magnitude at substation (p.u.). 
    \item [$v_{i, t}$] Square of voltage magnitude (p.u.).
    \item [$\bm{v}$] Vector of all CM decision variables.
    \item [$\tilde{\bm{v}}$] Vector of all CRM-APPX decision variables.
\end{ldescription}

\noindent \textit{D. Parameter}
\begin{ldescription}{$\rm{Pr}_{0}(t)/\rm{Pr}_{i}^{\diamond}(t)xxx$}
\setlength{\itemsep}{0pt}
    \item [$\rm{AT}_t$] Ambient temperature $(\text{C}^{\circ})$.
    \item [$c_{0, t}$] Per-unit energy cost at substation (\$).
    \item [$c_{i, t}^{\diamond}$] Per-unit energy cost from distributed resource; $\diamond = \{\text{DG}, (\text{B,c}),$ \\
    $(\text{B,d}), \text{DR}\}$ (\$).
    \item [$\bm{c}_{i, t}$] Per-unit cost energy cost from each resource type defined as a column vector $\bm{c}_{i, t} = (c_{i, t}^{\text{DG}}, c_{i, t}^{\text{B,c}}, c_{i, t}^{\text{B,d}}, c_{i, t}^{\text{DR}})^{T}$.
    \item [$G_{i}/B_{i}$] Shunt conductance/susceptance of the line $i$ (p.u.).
    \item [$p_{i, t}^{\rm c}/q_{i, t}^{\rm c}$] Active/reactive power consumption (p.u.).
    \item [$P_{i}^{\rm \diamond, min}$]
    Lower bound on $p_{i,t}^\diamond$; $\diamond = \{\text{DG}, (\text{BESS}), \text{DR}\}$ (p.u.).
    \item [$P_{i}^{\rm \diamond, max}$]
    Upper bound on $p_{i,t}^\diamond$; $\diamond = \{\text{DG}, (\text{BESS}), \text{DR}\}$ (p.u.).
    \item [$\Omega_{0, t}^{\text{b}}$] Cost of asset failure at substation (\$).
    \item [$\Omega_{i, t}^{\text{b}}$] Cost of asset failure (\$).
    \item [$Re_{0}^{\text{b}}(t)$] \textit{Reliability} of substation up to time $t$.
    \item [$Re_{i}^{\diamond}(t)$] \textit{Reliability} of $\diamond$ up to time $t$, $\diamond = \{\text{b}, \text{l} \}$.
    \item [$F_{0}^{\text{b}}(t)$] \textit{Unreliability} of substation up to time $t$.
    \item [$F_{i}^{\diamond}(t)$] \textit{Unreliability} of $\diamond$ up to time $t$, $\diamond = \{ \text{b}, \text{l} \}$.
    \item [$\rm{Pr}_{0,\textit{t}}^{\text{b}}$] \textit{Interval unreliability} of substation.
    \item [$\rm{Pr}_{\textit{i,t}}^{\diamond}$] \textit{Interval unreliability} of $\diamond$, $\diamond = \{\text{b}, \text{l} \}$.
    \item [$R_{i}/X_{i}$] Resistance/reactance of line $i$ (p.u.).
    \item [$S_{i}$] Apparent power flow limit on line $i$ (p.u.).
    \item [$V_{0}^{\text{con}}$] Constant voltage at substation (p.u.).
    \item [$V_{i}^{\diamond}$] Bus voltage limits; $\diamond = \{\text{min}, \text{max} \}$ (p.u.).
    \item [$w_{0}$] Per unit cost of load-shedding at substation (\$).
    \item [$w_{i}^{\rm c}$] Per unit cost of power consumption (\$).
    \item [$w_{i}^{\diamond}$] Per unit cost of $\diamond$; $\diamond = \{\text{DG},\ (\text{B,c}),\ (\text{B,d}),\ \text{DR}\}$ (\$).
    \item [$\bm{w}_{i}$] Per unit cost of de-energizing consumers, DGs, BESSs defined as a column vector $\bm{w}_{i} = (w_{i}^{\rm DG}, w_{i}^{\rm B, c}, w_{i}^{\rm B, d}, w_{i}^{\rm DR})$.
    \item [$\Delta P_{i, \text{AT}_t}^{\diamond, \text{corr}}$] Maximum capacity correction factor for $\diamond$ due to ambient temperature $T$ at time $t$, $\diamond = \{\text{DG}, \text{B}, \text{l} \}$.
    \item [$\lambda_{\text{b}, 0}, \beta_{\kappa, 0}^{\text{b}}$] Coefficients of generalized logistic regression model for modelling the interval unreliability of substation, $\kappa = 0, 1, 2$.
    \item [$\lambda_{\diamond, i}, \beta_{\kappa, i}^{\diamond}$] Coefficients of generalized logistic regression model for modelling the interval unreliability of buses and lines, $\kappa = 0, 1, 2$, $\diamond = \{\text{b}, \text{l} \}$.
    \item [$\delta_i$] BESS Self-discharge rate (\%).
    \item [$\eta_i^{\rm B, c}/\eta_i^{\rm B, d}$] BESS Charging/discharging efficiency (\%).
    \item [${\rm {SOC}}_{i}^{\rm B, min}$] Minimum BESS state-of-charge (\%).
    \item [${\rm {SOC}}_{i}^{\rm B, max}$] Maximum BESS state-of-charge (\%).
\end{ldescription}

\newpage
\section{Model} \label{sec:3_math_model}

We model an active distribution system that is operated by a central distribution system operator and that hosts various controllable resources, i.e., DG, BESS, DR.
Section~\ref{sec:consts} introduces this model.
We use the established \textit{DistFlow} (or \textit{Branch Flow}) AC optimal power flow formulation for radial systems to model power flow and voltages (\cite{baran1989optimal,low2014convex}).
Building on the results in \cite{moein2016}, the model also captures dynamic resource capacity ratings that depend on ambient temperature. 
As a result, the model adjusts the maximum capacity of DGs and lines by introducing temperature-dependent coefficients. 

We then formulate two objectives of the distribution system operator using the model. First, in the \textit{cost-only model} (CM) the distribution system operator aims to control all resources such that the cost of supplying system demand is minimized and distributed generation can be utilized as much as possible (Section~\ref{sec:3_cm}).
Section~\ref{sec:3_crem} then extends the CM to additionally account for the cost of expected energy-not-served (EENS) in case of the failure of a bus or line (e.g., due to equipment failure or overload triggering protection equipment.)
The objective of this \textit{cost-and-reliability model} (CRM), is non-convex because the probability of component failure failure depends on the system control setpoints (which define the power flows, local generation, and voltages that, in turn, affect failure rate).
We propose a computationally efficient iterative solution approach using sequential convex optimization to solve the problem in Section~\ref{sec:3_comp}.

\subsection{Active distribution grid model}
\label{sec:consts}

We consider a radial distribution system with a set $\mathcal{N}$ of buses indexed by $i=0,1,...,N$. 
Index $i=0$ corresponds to the substation at the root of the network.  
Due to the radial network configuration, each bus $i$ has exactly one ancestor (upstream) bus $\mathcal{A}_i$ with $\left| \mathcal{A}_i \right| = 1$. 
This allows to uniquely identify each line with the index $i\in\mathcal{N}^+ = \mathcal{N}\setminus\{0\}$ as the line connecting bus $i$ to bus $\mathcal{A}_i$. 
The number of children (downstream) buses of bus $i$, collected in set $\mathcal{S}c_i$, is  $\left| \mathcal{S}c_i \right| \geq 1$ for internal buses $i$ and $\left| \mathcal{S}c_i \right| = 0$ for leaf buses. 
Consequently, there is a unique path between each bus and the substation.
Each bus can host distributed resources, i.e., DG (at all buses in $\mathcal{N}_{\rm DG}$), DR (at all buses in $\mathcal{N}_{\rm DR}$), or BESS (at all buses in $\mathcal{N}_{\rm BESS}$). 
As per the the \textit{DistFlow} model, the voltage at each bus is captured by the voltage magnitude squared $v_{i,t}$ and current flowing over each line by the current magnitude squared $l_{i,t}$.
We model active and reactive power losses due to the shunt conductance at bus~$i$ as $G_{i}v_{i,t}$ and $B_{i}v_{i,t}$, and losses due to line resistance and reactance as $R_{i}l_{i,t}$ and $X_{i}l_{i,t}$.
Fig.~\ref{fig:radial_network} illustrates the central elements of our notation and
the complete model is given by the following set of constraints:
\begin{figure}
    \begin{tikzpicture}[
        every node/.style={font=\small},
        bus/.style={draw, fill=black, thick},
        bus_b/.style={draw=blue, fill=blue, thick},
        block_c/.style={draw, fill=gray!20, minimum width=0.5cm, minimum height=4cm},
        block_a/.style={draw, fill=gray!20, minimum width=0.5cm, minimum height=1.5cm},
        bus_type/.style={draw=blue, fill=blue, minimum width=0.5cm, minimum height=0.25cm, text centered, text=white, align=center, font=\scriptsize, inner sep=0},
        substation/.style={draw, fill=black, thick, circle, minimum size=0.5cm, text=white, align=center, font=\scriptsize},
        set/.style={draw, fill=white, thick, circle, minimum size=0.4cm, text=black, align=center, font=\scriptsize, inner sep=0},
        bus_ori/.style={draw=blue, fill=white, thick, circle, minimum size=0.4cm, text=blue, align=center, font=\tiny, inner sep=0},
        arrow/.style={-Latex, thick},
        arrow_in/.style={-Stealth, thick, draw = blue},
        arrow_out/.style={-Stealth, thick, draw = purple},
        label/.style={font=\small}
        ]

        \pgfdeclarelayer{background}
        \pgfdeclarelayer{foreground}
        \pgfsetlayers{background,main,foreground}

        \node[label] at (-0.5, -2.85) {upstream};
        \node[label] at (4.5, -2.85) {downstream};
        \draw[arrow, thick, dash dot] (2.01, -2.5) -- (0, -2.5);
        \draw[arrow, thick, dash dot] (2.01, -2.5) -- (4.0, -2.5);
        \draw[dotted] (2.01, -2.3) -- (2.01, -2.7);

        \begin{pgfonlayer}{foreground}
            \draw[thick] (-2.7, 0) -- (-2.2, 0);
            \draw[thick, dashed] (-2.0, 0) -- (-1.4, 0);
            \draw[thick] (-1.2, 0) -- (2.5, 0);
            \draw[thick] (2.07, 0.25) -- (2.51, 0.25);
            \draw[thick] (2.5, 0.25) -- (4, 1.25);
            \draw[thick] (3.99, 1.25) -- (6.5, 1.25);
            \draw[thick, dashed] (6.5, 1.25) -- (7.5, 1.25);
            \draw[thick, dashed] (2.5, 0) -- (3.75, 0);
            \draw[thick] (2.07, -0.25) -- (2.51, -0.25);
            \draw[thick] (2.5, -0.25) -- (4, -1.25);
            \draw[thick] (3.99, -1.25) -- (6.5, -1.25);
            \draw[thick, dashed] (6.5, -1.25) -- (7.5, -1.25);
            \draw[thick, dotted] (4.55, -0.4) -- (4.55, 0.4);
        \end{pgfonlayer}

        \node[block_c, anchor=west] (block) at (4.3, 0) {};
        \node[block_a, anchor=west] (block) at (-0.75, 0) {};

        \node[substation] at (-3, 0){\textbf{Sub}};
        \node[bus, anchor=west, minimum width=0.1cm, minimum height=1.0cm, inner sep=0] (bus) at (-0.55, 0){};
        \node[bus_b, anchor=west, minimum width=0.1cm, minimum height=1.5cm, inner sep=0] (bus) at (1.95, 0){};
        \node[bus, anchor=west, minimum width=0.1cm, minimum height=1.0cm, inner sep=0] (bus) at (4.5, -1.25){};
        \node[bus, anchor=west, minimum width=0.1cm, minimum height=1.0cm, inner sep=0] (bus) at (4.5, 1.25){};
        \node[bus, anchor=west, minimum width=0.1cm, minimum height=1.0cm, inner sep=0] (bus) at (6.0, 1.25){};

        \node[bus_type] at (0.15, 1.75){\textbf{DG}};
        \node[bus_type] at (1, 1.75){\textbf{DR}};
        \node[bus_type] at (2, 1.75){\textbf{BESS}};

        \begin{pgfonlayer}{foreground}
            \node[set] at (-0.75, -0.75){$\mathcal{A}_i$};
            \node[bus_ori] at (1.6, -0.75){$\textbf{Bus}_i$};
            \node[set] at (4.25, -2){$\mathcal{S}c_i$};
        \end{pgfonlayer}

        \draw[arrow_in, dotted] (0.5, 0.25) -- (1.94, 0.25);
        \draw[thick, dotted, purple] (2.1, 0.125) -- (2.51, 0.125);
        \draw[arrow_out, dotted] (3.99, 1.125) -- (4.5, 1.125);
        \draw[thick, dotted, purple] (2.51, 0.125) -- (4.0, 1.125);
        \draw[thick, dotted, purple] (2.1, -0.125) -- (2.51, -0.125);
        \draw[arrow_out, dotted] (3.99, -1.125) -- (4.5, -1.125);
        \draw[thick, dotted, purple] (2.51, -0.125) -- (4.0, -1.125);

        \draw[arrow_in, dotted] (1.96, 1.60) -- (1.96, 0.75);
        \draw[arrow_out, dotted] (2.04, 0.75) -- (2.04, 1.60);
        \draw[thick, dotted, blue] (0.15, 0.45) -- (0.15, 1.75);
        \draw[arrow_in, dotted] (0.15, 0.45) -- (1.96, 0.45);
        \draw[thick, dotted, blue] (1, 0.65) -- (1, 1.75);
        \draw[arrow_in, dotted] (1, 0.65) -- (1.96, 0.65);

        \draw[arrow_out, dotted] (3, -0.5) -- (3, -1.5);
        \draw[arrow_out, dotted] (3.25, 0.6) -- (3.25, -1.5);
        \draw[arrow_out, dotted] (2.01, -0.75) -- (2.01, -1.5);
        \draw[thick, dotted, purple] (0.75, -0.125) -- (1.95, -0.125);
        \draw[arrow_out, dotted] (0.75, -0.125) -- (0.75, -1.5);

        \node[label] at (-0.2, 1.35) {\textcolor{blue}{$p_i^{\text{DG}}$}};
        \node[label] at (0.675, 1.35) {\textcolor{blue}{$p_i^{\text{DR}}$}};
        \node[label] at (1.65, 1.35) {\textcolor{blue}{$p_i^{\text{B,d}}$}};
        \node[label] at (0.3, 0.25) {\textcolor{blue}{$f_i^{p}$}};
        \node[label] at (2.45, 1.35) {\textcolor{purple}{$p_i^{\text{B,c}}$}};
        \node[label] at (4, 0.8) {\textcolor{purple}{$f_{\mathcal{S}c_i}^{p}$}};
        \node[label] at (4, -0.8) {\textcolor{purple}{$f_{\mathcal{S}c_i}^{p}$}};
        \node[label] at (0.75, -1.7) {\textcolor{purple}{$p_{i}^{c}$}};
        \node[label] at (2.0, -1.7) {\textcolor{purple}{$G_{i}v_{i}$}};
        \node[label] at (3.3, -1.7) {\textcolor{purple}{$l_{\mathcal{S}c_i}R_{\mathcal{S}c_i}$}};
    \end{tikzpicture}
\caption{Schematic illustration of central \textit{DistFlow} notations for active power in radial networks.}
\label{fig:radial_network}
\end{figure}
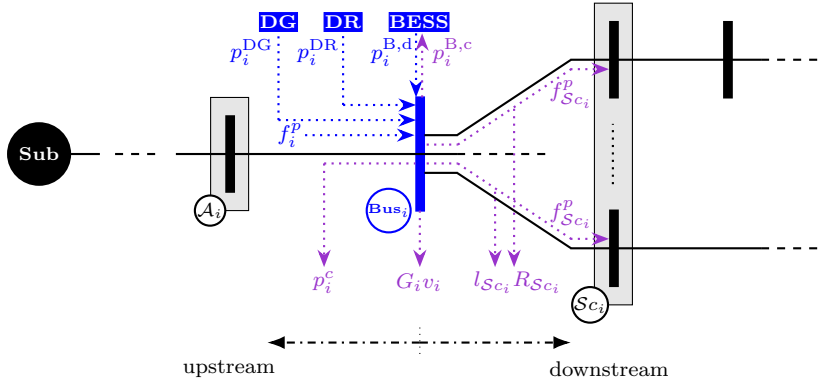

\allowdisplaybreaks
\begin{align}
    & \forall i \in \mathcal{N}^+, \forall t \in \mathcal{T}: \left( f_{i, t}^p \right)^2 + \left( f_{i, t}^q \right)^2 \leq \Delta P_{i, \text{AT}_t}^{\text{l,corr}} S_{i}^2 \label{eq:const_line_1}\\
    & \forall i \in \mathcal{N}^+, \forall t \in \mathcal{T}: \left( f_{i, t}^p + l_{i, t} R_{i} \right)^2 + \left( f_{i, t}^q + l_{i, t} X_{i} \right)^2 \leq \Delta P_{i, \text{AT}_t}^{\text{l,corr}} S_{i}^2 \label{eq:const_conic_1}\\
    & \forall i \in \mathcal{N}^+, \forall t \in \mathcal{T}: v_{i, t} + 2\left(R_{i}f_{i, t}^p + X_{i}f_{i, t}^q \right) + l_{i, t}\left( R_{i}^2 + X_{i}^2 \right) = v_{\mathcal{A}_i, t} \label{eq:const_conic_2}\\
    & \forall i \in \mathcal{N}^+, \forall t \in \mathcal{T}: \frac{\left( f_{i, t}^p \right)^2 + \left( f_{i, t}^q \right)^2}{l_{i, t}} \leq v_{i, t} \label{eq:const_conic_3}\\
    & \forall i \in \{0\}, \forall t \in \mathcal{T}: p_{i, t} = \sum_{j \in \mathcal{S}c_0}\left( f_{j, t}^p + l_{j, t}R_{j} \right) + G_{i}v_{i, t} \label{eq:const_sub_bal_real}\\
    & \forall i \in \{0\}, \forall t \in \mathcal{T}: q_{i, t} = \sum_{j \in \mathcal{S}c_0}\left( f_{j, t}^q + l_{j, t}X_{j} \right) + B_{i}v_{i, t} \label{eq:const_sub_bal_reactive}\\
    & i \in \mathcal{N}^+, \forall t \in \mathcal{T}: \label{eq:const_bus_bal_real}\\
    & f_{i, t}^p = \sum_{j \in \mathcal{S}c_i}\left( f_{j, t}^p + l_{j, t} R_{j} \right) + p_{i, t}^c - p_{i, t}^{\rm DG} + p_{i, t}^{\rm B, c} - p_{i, t}^{\rm B, d} - p_{i, t}^{\rm DR} + G_{i}v_{i, t} \notag \\
    & i \in \mathcal{N}^+, \forall t \in \mathcal{T}: \label{eq:const_bus_bal_reactive}\\
    & f_{i, t}^q = \sum_{j \in \mathcal{S}c_i}\left( f_{j, t}^q + l_{j, t} X_{j} \right) + q_{i, t}^c - q_{i, t}^{\rm DG} - q_{i, t}^{\rm DR} + B_{i}v_{i, t} \notag \\
    & \forall i \in \{0\}, \forall t \in \mathcal{T}: v_{i, t} = V_{i}^{\text{con}}\label{eq:const_sub_voltage_equal} \\
    & \forall i \in \mathcal{N}^{+}, \forall t \in \mathcal{T}: V_{i}^{\text{min}} \leq v_{i, t} \leq V_{i}^{\text{max}} \label{eq:const_bus_voltage_lim} \\
    & \forall i \in \{0\}, \forall t \in \mathcal{T}: p_{i, t} \leq P_{i}^{\rm max} \label{eq:const_sub_lim_real}\\
    & \forall i \in \{0\}, \forall t \in \mathcal{T}: q_{i, t} \leq Q_{i}^{\rm max} \label{eq:const_sub_lim_reactive}\\
    & \forall i \in \mathcal{N}_{\text{DG}}, \forall t \in \mathcal{T}: P_{i}^{\text{DG,min}} \leq p_{i, t}^{\rm DG} \leq P_{i}^{\text{DG,max}} \Delta P_{i, \text{AT}_t}^{\text{DG,corr}} \label{eq:const_dg_lim_real}\\
    & \forall i \in \mathcal{N}_{\text{DG}}, \forall t \in \mathcal{T}: Q_{i}^{\text{DG,min}} \leq q_{i, t}^{\rm DG} \leq Q_{i}^{\text{DG,max}} \label{eq:const_dg_lim_reactive}\\   
    & \forall i \in \mathcal{N}_{\text{DR}}, \forall t \in \mathcal{T}: P_{i}^{\text{DR,min}} u_{i, t}^{\text{DR}} \leq p_{i, t}^{\rm DR} \leq P_{i}^{\text{DR,max}} u_{i, t}^{\rm DR} \label{eq:const_dr_lim_real}\\
    & \forall i \in \mathcal{N}_{\text{DR}}, \forall t \in \mathcal{T}: Q_{i}^{\text{DR,min}} u_{i, t}^{\text{DR}} \leq q_{i, t}^{\rm DR} \leq Q_{i}^{\text{DR,max}} u_{i, t}^{\rm DR} \label{eq:const_dr_lim_reactive}\\
    & \forall i \in \mathcal{N}_{\text{BESS}}, \forall t \in \mathcal{T}: \text{SOC}_{i, t}^{\rm B} = \label{eq:const_soc_bal} \\
    & \left( 1 - \delta_i \right) \text{SOC}_{i, t - 1}^{\rm B} + \left( \frac{p_{i, t}^{\rm B, c}\eta_i^{\rm B, c}}{P_{i}^{\rm B, max} \Delta P_{i, \text{AT}_t}^{\text{B,corr}}} \right) - \left( \frac{p_{i, t}^{\rm B, d}}{\eta_i^{\rm B, d}P_{i}^{\rm B,max} \Delta P_{i, \text{AT}_t}^{\text{B,corr}}} \right) \notag \\
    & \forall i \in \mathcal{N}_{\rm BESS}, \forall t \in \mathcal{T}: \text{SOC}_{i}^{\rm B, min} \leq \text{SOC}_{i, t}^{\rm B} \leq \text{SOC}_{i}^{\rm B, max} \label{eq:const_soc_lim}\\
    & \forall i \in \mathcal{N}_{\rm BESS}, \forall t \in \mathcal{T}: P_{i}^{\rm B, min} u_{i, t}^{\rm B} \leq p_{i, t}^{\rm B, c} \leq P_{i}^{\rm B, max} u_{i, t}^{\rm B} \label{eq:const_battery_c_lim_real} \\
    & \forall i \in \mathcal{N}_{\rm BESS}, \forall t \in \mathcal{T}: P_{i}^{\rm B, min} \left( 1 - u_{i, t}^{\rm B} \right) \leq p_{i, t}^{\rm B, d} \leq P_{i}^{\rm B, max} \left( 1 - u_{i, t}^{\rm B} \right)\label{eq:const_battery_d_lim_real} 
\end{align}
\allowdisplaybreaks[0]%
Constraints \eqref{eq:const_line_1}, \eqref{eq:const_conic_1} limit the real and reactive power flow ($f^p_{i,t}$ and $f^q_{i,t}$) between buses, ensuring they do not exceed the maximum allowable line capacity, which is adjusted for the ambient temperature by the maximum capacity correction factor $(\Delta P_{i, \text{AT}_t}^{\text{l, max}})$, as suggested in \cite{moein2016}.
Constraints \eqref{eq:const_conic_2}, \eqref{eq:const_conic_3} model the bus voltages and line currents.
Constraints \eqref{eq:const_sub_bal_real}, \eqref{eq:const_sub_bal_reactive} are the real and reactive power flow balance equations for the substation ($i=0$) and constraints \eqref{eq:const_bus_bal_real}, \eqref{eq:const_bus_bal_reactive} are the real and reactive power flow balance equations for all other buses $i\in\mathcal{N}^+$.
Constraints \eqref{eq:const_sub_voltage_equal}, \eqref{eq:const_bus_voltage_lim} establish the bounds for the squared voltage magnitudes across all buses.
Constraints \eqref{eq:const_sub_lim_real}, \eqref{eq:const_sub_lim_reactive} limit the active and reactive power throughput at the substation. 
Constraints \eqref{eq:const_dg_lim_real},  \eqref{eq:const_dg_lim_reactive} set the lower and upper bounds for the real and reactive power of the DG at bus $i$. 
For real power generation, the maximum power correction factor $(\Delta P_{i, \text{AT}_t}^{\text{DG, corr}})$ for each DG at $i$ 
due to ambient temperature is added (\cite{moein2016}). 
Constraints \eqref{eq:const_dr_lim_real}, \eqref{eq:const_dr_lim_reactive} set the lower and upper bounds for the real and reactive power of the DR resource.
Constraint \eqref{eq:const_soc_bal} models BESS state-of-charge (SOC).
The SOC at time step $t$ is determined by several factors, including the previous SOC at time step $t - 1$, adjusted by self-discharge rate $\delta_i$. 
Charging power $p_{i,t}^{\rm B,c}$ is weighted by charging efficiency $\eta_{i}^{\rm B, c}$ and normalized by the product of the battery's maximum power $P_{i}^{\rm B, max}$ and a power correction factor $(\Delta P_{i, \text{AT}_t}^{\text{B,corr}})$
that depends on temperature $\text{AT}_t$ (\cite{moein2016}). 
Discharging power $p_{i,t}^{\text{B,d}}$ is also adjusted by discharging efficiency $\eta_i^{\rm B, d}$ and normalized by $P_{i}^{\rm B, max}$ and temperature-dependent power correction factor $\Delta P_{i, \text{AT}_t}^{\text{B,corr}}$. 
Constraint \eqref{eq:const_soc_lim} sets the lower and upper bounds for the SOC and constraints \eqref{eq:const_battery_c_lim_real} \eqref{eq:const_battery_d_lim_real} limit the BESS charging and discharging power. 
\subsection{Cost-only objective (CM)}\label{sec:3_cm}

For concise notation, we collect all decision variables from \eqref{eq:const_line_1}--\eqref{eq:const_battery_d_lim_real} in the vector~$\bm{v}$.
The objective of CM is to minimize the distribution system operational cost: 
\begin{equation}
    Obj_{\rm  CM}: \min_{\bm{v}} \left[ \sum_{t \in \mathcal{T}} \left( c_{0, t} p_{0, t} + \sum_{i \in \mathcal{N}^+} \bm{c}^{T}_{i, t} \bm{p}_{i, t} \right) \right] = \min_{\bm{v}} \left[ \sum_{t \in \mathcal{T}}\mathcal{C}_t^{\text{op}}({\bm{v}}) \right]. \label{eq:obj_baseline}
\end{equation}
The first term in \eqref{eq:obj_baseline} captures the cost of buying energy at the substation from the transmission grid. 
Hence, cost $c_{0,t}$ are, e.g., the wholesale electricity price at time $t$. 
The second term in \eqref{eq:obj_baseline} reflects the cost of utilizing distributed resources (DG, DR, BESS charging and discharging).

\subsection{Cost-and-reliability model (CRM)}\label{sec:3_crem}

The objective of CRM is to minimize the operational costs of the power system ($\mathcal{C}^{\text{op}}$ as defined in \eqref{eq:obj_baseline} above) and \textit{in addition} to CM consider the expected cost of energy not served $\mathcal{C}^{\text{eens}}$, which depend on system operations.
\begin{align}
    Obj_{\rm CRM}: \min_{\bm{v}} \left[ \sum_{t \in \mathcal{T}} \mathcal{C}^{\text{op}}_{t}(\bm{v}) + \sum_{t \in \mathcal{T}} \mathcal{C}_{t}^{\text{eens}}(\bm{v}) \right].
\label{eq:CRM_objective}
\end{align}
We define $\mathcal{C}_{t}^{\text{eens}}(\bm{v})$ as:
\begin{equation}
    \mathcal{C}_{t}^{\text{eens}}(\bm{v}) = \left(\Omega_{0, t}^{\rm b}(\bm{v}) \text{Pr}_{0,\textit{t}}^{\rm b}(\bm{v})\right) +\! \sum_{i \in \mathcal{N}^{+}}\! \Omega_{i, t}^{\rm b}(\bm{v}) \left(\!1\! - \!\left(\! 1 - \text{Pr}_{\textit{i,t}}^{\rm b}(\bm{v}) \right)\!\!\!\!\! \prod_{j \in \rm{MCS}_{b_i}} \!\!\!\!\!\left( 1 - \text{Pr}_{\textit{j,t}}^{\rm l}(\bm{v}) \right)\!\!\right), \label{eq:reli_measure}
\end{equation}
where $\rm{Pr}_{\textit{i,t}}^{\rm b}$, $\rm{Pr}_{\textit{i,t}}^{\rm l}$ is the interval unreliability (i.e., the probability that a bus or line will fail in time step $t$) and $\Omega_{i, t}^{\rm b}$ are resulting cost in the event of a failure.

The detailed expression of $\mathcal{C}_{t}^{\text{eens}}(\bm{v})$ is based on some central modeling definitions and assumptions outlined below.
\begin{definition}[Component failure]
    A component (bus or line, including transformers) is considered \emph{failed} if they must be de-energized for any reason. Such reasons may include triggered protection equipment (e.g., due short circuits or ground faults) or actual equipment failure (e.g., mechanical failures, transformer damage).  A component that is not failed is considered \emph{operational}.
\end{definition}

Any bus $i$ is considered operational at time step $t$ if and only if bus $i$ itself has not failed during $t$ and no line in the minimum cut-set $\mathcal{MCS}_{\text{b}_i}$ of the network flow from the substation to the bus has failed.
In a radial network, $\mathcal{MCS}_{\text{b}_i}$ consists of all upstream buses that directly connect the substation to bus $i$. 
See also Fig.~\ref{fig:radial_network_bus_i_status} for illustration.

\begin{assumption}
    The failure events of buses and lines \emph{as a component} are treated as mutually independent across all components of the system. 
\end{assumption}

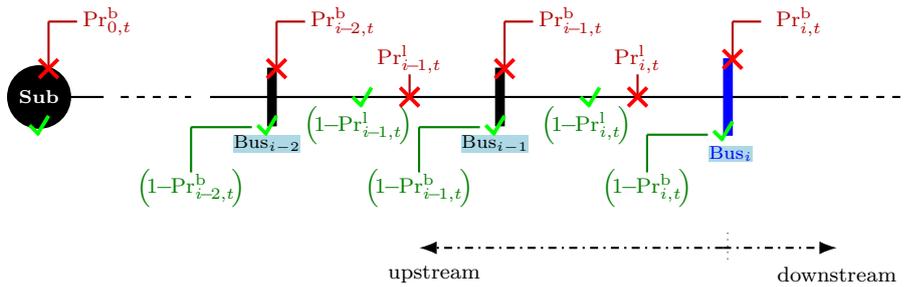
\begin{figure}[b]
    \begin{tikzpicture}[
        every node/.style={font=\small},
        bus/.style={draw, fill=black, thick},
        bus_b/.style={draw=blue, fill=blue, thick},
        substation/.style={draw, fill=black, thick, circle, minimum size=0.5cm, text=white, align=center, font=\scriptsize},
        arrow/.style={-Latex, thick},
        label/.style={font=\small},
        label_tiny/.style={font=\scriptsize, fill = lightblue, inner sep = 0},
        label_prob_r/.style={font=\small, text=darkred},
        label_prob_g/.style={font=\small, text=darkgreen}
        ]

        \pgfdeclarelayer{background}
        \pgfdeclarelayer{foreground}
        \pgfsetlayers{background,main,foreground}

        \node[label] at (2.2, -2.35) {upstream};
        \node[label] at (7.5, -2.35) {downstream};
        \draw[arrow, thick, dash dot] (6, -2) -- (2, -2);
        \draw[arrow, thick, dash dot] (6, -2) -- (7.5, -2);
        \draw[dotted] (6.06, -1.8) -- (6.06, -2.2);

        \begin{pgfonlayer}{background}
            \draw[thick] (-3, 0) -- (-2.15, 0);
            \draw[thick, dashed] (-1.925, 0) -- (-1, 0);
            \draw[thick] (-0.75, 0) -- (6.75, 0);
            \draw[thick, dashed] (6.75, 0) -- (8.5, 0);
        \end{pgfonlayer}

        \begin{pgfonlayer}{main}
            \node[substation] at (-3, 0){\textbf{Sub}};
            \node[bus, anchor=west, minimum width=0.1cm, minimum height=0.75cm, inner sep=0] (bus) at (0, 0){};
            \node[bus, anchor=west, minimum width=0.1cm, minimum height=0.75cm, inner sep=0] (bus) at (3, 0){};
            \node[bus_b, anchor=west, minimum width=0.1cm, minimum height=1cm, inner sep=0] (bus) at (6, 0){};

            \node[label_tiny] at (0, -0.625) {\textcolor{black}{$\text{Bus}_{i-2}$}};
            \node[label_tiny] at (3, -0.625) {\textcolor{black}{$\text{Bus}_{i-1}$}};
            \node[label_tiny] at (6.1, -0.75) {\textcolor{blue}{$\text{Bus}_{i}$}};
        \end{pgfonlayer}

        \begin{pgfonlayer}{foreground}
            \draw[line width=0.5mm, draw = red] (-3, 0.25) -- (-2.75, 0.5);
            \draw[line width=0.5mm, draw = red] (-3, 0.5) -- (-2.75, 0.25);
            \draw[line width=0.5mm, draw = red] (0, 0.25) -- (0.25, 0.5);
            \draw[line width=0.5mm, draw = red] (0, 0.5) -- (0.25, 0.25);
            \draw[line width=0.5mm, draw = red] (3, 0.25) -- (3.25, 0.5);
            \draw[line width=0.5mm, draw = red] (3, 0.5) -- (3.25, 0.25);
            \draw[line width=0.5mm, draw = red] (6, 0.375) -- (6.25, 0.625);
            \draw[line width=0.5mm, draw = red] (6, 0.625) -- (6.25, 0.374);

            \draw[line width=0.5mm, draw = red] (1.75, -0.125) -- (2, 0.125);
            \draw[line width=0.5mm, draw = red] (1.75, 0.125) -- (2, -0.125);
            \draw[line width=0.5mm, draw = red] (4.75, -0.125) -- (5, 0.125);
            \draw[line width=0.5mm, draw = red] (4.75, 0.125) -- (5, -0.125);
        \end{pgfonlayer}

        \begin{pgfonlayer}{foreground}
            \draw[line width=0.5mm, draw = green] (-3.125, -0.375) -- (-3, -0.5);
            \draw[line width=0.5mm, draw = green] (-3.025, -0.5) -- (-2.875, -0.25);
            \draw[line width=0.5mm, draw = green] (-0.125, -0.375) -- (0, -0.5);
            \draw[line width=0.5mm, draw = green] (-0.025, -0.5) -- (0.125, -0.25);
            \draw[line width=0.5mm, draw = green] (2.875, -0.375) -- (3, -0.5);
            \draw[line width=0.5mm, draw = green] (2.975, -0.5) -- (3.125, -0.25);
            \draw[line width=0.5mm, draw = green] (5.875, -0.45) -- (6, -0.575);
            \draw[line width=0.5mm, draw = green] (5.975, -0.575) -- (6.125, -0.325);

            \draw[line width=0.5mm, draw = green] (1.125, 0) -- (1.25, -0.125);
            \draw[line width=0.5mm, draw = green] (1.225, -0.125) -- (1.375, 0.125);
            \draw[line width=0.5mm, draw = green] (4.125, 0) -- (4.25, -0.125);
            \draw[line width=0.5mm, draw = green] (4.225, -0.125) -- (4.375, 0.125);
        \end{pgfonlayer}

        \draw[thick, draw = darkred] (-2.875, 0.375) -- (-2.875, 1);
        \draw[thick, draw = darkred] (-2.875, 1) -- (-2.5, 1);
        \node[label_prob_r] at (-2.1, 1) {$\text{Pr}_{0,t}^{\text{b}}$};
        \draw[thick, draw = darkred] (0.125, 0.375) -- (0.125, 1);
        \draw[thick, draw = darkred] (0.125, 1) -- (0.5, 1);
        \node[label_prob_r] at (1, 1) {$\text{Pr}_{i \!- \!2,t}^{\text{b}}$};
        \draw[thick, draw = darkred] (3.125, 0.375) -- (3.125, 1);
        \draw[thick, draw = darkred] (3.125, 1) -- (3.5, 1);
        \node[label_prob_r] at (4, 1) {$\text{Pr}_{i \!- \!1,t}^{\text{b}}$};
        \draw[thick, draw = darkred] (6.125, 0.5) -- (6.125, 1);
        \draw[thick, draw = darkred] (6.125, 1) -- (6.5, 1);
        \node[label_prob_r] at (7, 1) {$\text{Pr}_{i,t}^{\text{b}}$};

        \draw[thick, draw = darkred] (1.875, 0) -- (1.875, 0.3);
        \node[label_prob_r] at (1.875, 0.5) {$\text{Pr}_{i \!- \!1,t}^{\text{l}}$};
        \draw[thick, draw = darkred] (4.875, 0) -- (4.875, 0.3);
        \node[label_prob_r] at (4.875, 0.5) {$\text{Pr}_{i,t}^{\text{l}}$};

        \draw[thick, draw = darkgreen] (-0.2, -0.4) -- (-1, -0.4);
        \draw[thick, draw = darkgreen] (-1, -0.4) -- (-1, -1);
        \node[label_prob_g] at (-1, -1.2) {$\left( \!1 \! \!- \! \!\text{Pr}_{i \!- \!2,t}^{\text{b}} \!\right)$};
        \draw[thick, draw = darkgreen] (2.8, -0.4) -- (2, -0.4);
        \draw[thick, draw = darkgreen] (2, -0.4) -- (2, -1);
        \node[label_prob_g] at (2, -1.2) {$\left( \!1 \! \!- \! \!\text{Pr}_{i \!- \!1,t}^{\text{b}} \!\right)$};
        \draw[thick, draw = darkgreen] (5.8, -0.5) -- (5, -0.5);
        \draw[thick, draw = darkgreen] (5, -0.5) -- (5, -1);
        \node[label_prob_g] at (5, -1.2) {$\left( \! 1  \!\! -  \!\! \text{Pr}_{i,t}^{\text{b}}\!\right)$};

        \node[label_prob_g] at (1.2, -0.375) {$\left( \!1 \! \!- \!\!\text{Pr}_{i \!- \!1,t}^{\text{l}} \!\right)$};
        \node[label_prob_g] at (4.2, -0.375) {$\left( \!1 \! \!- \! \!\text{Pr}_{i,t}^{\text{l}} \!\right)$};
        
    \end{tikzpicture}
\vspace{-2em}    
\caption{Reliability Modeling for Radial Network}
\label{fig:radial_network_reliability_modelling}
\end{figure}

\begin{figure}
    \centering
    
    \begin{subfigure}[b]{0.45\textwidth}
        \centering
                \begin{tikzpicture}[
        every node/.style={font=\small},
        bus/.style={draw, fill=black, thick},
        bus_b/.style={draw=darkgreen, fill=darkgreen, thick},
        substation/.style={draw, fill=black, thick, circle, minimum size=0.5cm, text=white, align=center, font=\scriptsize},
        arrow/.style={-Latex, thick},
        label/.style={font=\small},
        label_tiny/.style={font=\scriptsize, fill = lightblue, inner sep = 0},
        label_prob_r/.style={font=\small, text=darkred},
        label_prob_g/.style={font=\small, text=darkgreen}
        ]

        \pgfdeclarelayer{background}
        \pgfdeclarelayer{foreground}
        \pgfsetlayers{background,main,foreground}

        \begin{pgfonlayer}{background}
            \draw[thick, draw = darkgreen] (-3, 0) -- (-2.35, 0);
            \draw[thick, dashed, draw = darkgreen] (-2.35, 0) -- (-1.85, 0);
            \draw[thick, draw = darkgreen] (-1.85, 0) -- (-1.05, 0);
            \draw[thick, dashed, draw = darkgreen] (-1.05, 0) -- (-0.4, 0);
            \draw[thick, draw = darkgreen] (-0.4, 0) -- (0.5, 0);
            \draw[thick, dashed, draw = darkgreen] (0.5, 0) -- (1.15, 0);
            \draw[thick, draw = darkgreen] (1.15, 0) -- (2, 0);
            \draw[thick, dashed, draw = darkgreen] (2, 0) -- (2.5, 0);
        \end{pgfonlayer}

        \begin{pgfonlayer}{main}
            \node[substation] at (-3, 0){\textbf{Sub}};
            \node[bus, anchor=west, minimum width=0.1cm, minimum height=0.75cm, inner sep=0] (bus) at (-1.5, 0){};
            \node[bus, anchor=west, minimum width=0.1cm, minimum height=0.75cm, inner sep=0] (bus) at (0, 0){};
            \node[bus_b, anchor=west, minimum width=0.1cm, minimum height=1cm, inner sep=0] (bus) at (1.5, 0){};

            \node[label_tiny] at (1.6, -0.75) {\textcolor{blue}{$\text{Bus}_{i}$}};
        \end{pgfonlayer}


        \begin{pgfonlayer}{foreground}
            \draw[line width=0.5mm, draw = green] (-3.125, -0.375) -- (-3, -0.5);
            \draw[line width=0.5mm, draw = green] (-3.025, -0.5) -- (-2.875, -0.25);
            \draw[line width=0.5mm, draw = green] (-1.625, -0.375) -- (-1.5, -0.5);
            \draw[line width=0.5mm, draw = green] (-1.525, -0.5) -- (-1.375, -0.25);
            \draw[line width=0.5mm, draw = green] (-0.125, -0.375) -- (0, -0.5);
            \draw[line width=0.5mm, draw = green] (-0.025, -0.5) -- (0.125, -0.25);
            \draw[line width=0.5mm, draw = green] (1.375, -0.45) -- (1.5, -0.575);
            \draw[line width=0.5mm, draw = green] (1.475, -0.575) -- (1.625, -0.325);

            \draw[line width=0.5mm, draw = green] (-1.875, 0) -- (-1.75, -0.125);
            \draw[line width=0.5mm, draw = green] (-1.775, -0.125) -- (-1.625, 0.125);
            \draw[line width=0.5mm, draw = green] (-0.375, 0) -- (-0.25, -0.125);
            \draw[line width=0.5mm, draw = green] (-0.275, -0.125) -- (-0.125, 0.125);
            \draw[line width=0.5mm, draw = green] (1.125, 0) -- (1.25, -0.125);
            \draw[line width=0.5mm, draw = green] (1.225, -0.125) -- (1.375, 0.125);
        \end{pgfonlayer}
        \end{tikzpicture}
        \vspace{-2em}    
        \caption{}
    \label{fig:radial_network_bus_working_ex_1}
    \end{subfigure}
    \hfill
    \begin{subfigure}[b]{0.45\textwidth}
        \centering
        \begin{tikzpicture}[
        every node/.style={font=\small},
        bus/.style={draw, fill=black, thick},
        bus_b/.style={draw=darkgreen, fill=darkgreen, thick},
        substation/.style={draw, fill=black, thick, circle, minimum size=0.5cm, text=white, align=center, font=\scriptsize},
        arrow/.style={-Latex, thick},
        label/.style={font=\small},
        label_tiny/.style={font=\scriptsize, fill = lightblue, inner sep = 0},
        label_prob_r/.style={font=\small, text=darkred},
        label_prob_g/.style={font=\small, text=darkgreen}
        ]

        \pgfdeclarelayer{background}
        \pgfdeclarelayer{foreground}
        \pgfsetlayers{background,main,foreground}

        \begin{pgfonlayer}{background}
            \draw[thick, draw = darkgreen] (-3, 0) -- (-2.35, 0);
            \draw[thick, dashed, draw = darkgreen] (-2.35, 0) -- (-1.85, 0);
            \draw[thick, draw = darkgreen] (-1.85, 0) -- (-1.05, 0);
            \draw[thick, dashed, draw = darkgreen] (-1.05, 0) -- (-0.4, 0);
            \draw[thick, draw = darkgreen] (-0.4, 0) -- (0.5, 0);
            \draw[thick, dashed, draw = darkgreen] (0.5, 0) -- (1.15, 0);
            \draw[thick, draw = darkgreen] (1.15, 0) -- (2, 0);
            \draw[thick, dashed, draw = darkgreen] (2, 0) -- (2.5, 0);
        \end{pgfonlayer}

        \begin{pgfonlayer}{main}
            \node[substation] at (-3, 0){\textbf{Sub}};
            \node[bus, anchor=west, minimum width=0.1cm, minimum height=0.75cm, inner sep=0] (bus) at (-1.5, 0){};
            \node[bus, anchor=west, minimum width=0.1cm, minimum height=0.75cm, inner sep=0] (bus) at (0, 0){};
            \node[bus_b, anchor=west, minimum width=0.1cm, minimum height=1cm, inner sep=0] (bus) at (1.5, 0){};

            \node[label_tiny] at (1.6, -0.75) {\textcolor{blue}{$\text{Bus}_{i}$}};
        \end{pgfonlayer}

        \begin{pgfonlayer}{foreground}
            \draw[line width=0.5mm, draw = red] (-1.5, 0.25) -- (-1.25, 0.5);
            \draw[line width=0.5mm, draw = red] (-1.5, 0.5) -- (-1.25, 0.25);
            \draw[line width=0.5mm, draw = red] (0, 0.25) -- (0.25, 0.5);
            \draw[line width=0.5mm, draw = red] (0, 0.5) -- (0.25, 0.25);
        \end{pgfonlayer}

        \begin{pgfonlayer}{foreground}
            \draw[line width=0.5mm, draw = green] (-3.125, -0.375) -- (-3, -0.5);
            \draw[line width=0.5mm, draw = green] (-3.025, -0.5) -- (-2.875, -0.25);
            \draw[line width=0.5mm, draw = green] (1.375, -0.45) -- (1.5, -0.575);
            \draw[line width=0.5mm, draw = green] (1.475, -0.575) -- (1.625, -0.325);

            \draw[line width=0.5mm, draw = green] (-1.875, 0) -- (-1.75, -0.125);
            \draw[line width=0.5mm, draw = green] (-1.775, -0.125) -- (-1.625, 0.125);
            \draw[line width=0.5mm, draw = green] (-0.375, 0) -- (-0.25, -0.125);
            \draw[line width=0.5mm, draw = green] (-0.275, -0.125) -- (-0.125, 0.125);
            \draw[line width=0.5mm, draw = green] (1.125, 0) -- (1.25, -0.125);
            \draw[line width=0.5mm, draw = green] (1.225, -0.125) -- (1.375, 0.125);
        \end{pgfonlayer}
        \end{tikzpicture}
        \vspace{-2em}    
        \caption{}
    \label{fig:radial_network_bus_working_ex_2}
    \end{subfigure}
    \caption{Radial Network Properly Working Up To Bus $i$ Examples: Fig. \ref{fig:radial_network_bus_working_ex_1} shows that Bus $i$ is operational, along with all other buses and lines in the system, while Fig. \ref{fig:radial_network_bus_working_ex_2} demonstrates that Bus $i$ remains operational despite disruptions to some upstream buses.}
    \label{fig:radial_network_bus_working_ex}
    
    \vspace{1cm}
    
    \begin{subfigure}[b]{0.45\textwidth}
        \centering
        \begin{tikzpicture}[
        every node/.style={font=\small},
        bus/.style={draw, fill=black, thick},
        bus_b/.style={draw=darkred, fill=darkred, thick},
        substation/.style={draw, fill=black, thick, circle, minimum size=0.5cm, text=white, align=center, font=\scriptsize},
        arrow/.style={-Latex, thick},
        label/.style={font=\small},
        label_tiny/.style={font=\scriptsize, fill = lightblue, inner sep = 0},
        label_prob_r/.style={font=\small, text=darkred},
        label_prob_g/.style={font=\small, text=darkgreen}
        ]

        \pgfdeclarelayer{background}
        \pgfdeclarelayer{foreground}
        \pgfsetlayers{background,main,foreground}

        \begin{pgfonlayer}{background}
            \draw[thick, draw = darkgreen] (-3, 0) -- (-2.35, 0);
            \draw[thick, dashed, draw = darkgreen] (-2.35, 0) -- (-1.85, 0);
            \draw[thick, draw = darkgreen] (-1.85, 0) -- (-1.05, 0);
            \draw[thick, dashed, draw = darkgreen] (-1.05, 0) -- (-0.4, 0);
            \draw[thick, draw = darkgreen] (-0.4, 0) -- (0.5, 0);
            \draw[thick, dashed, draw = darkred] (0.5, 0) -- (1.15, 0);
            \draw[thick, draw = darkred] (1.15, 0) -- (2, 0);
            \draw[thick, dashed, draw = darkred] (2, 0) -- (2.5, 0);
        \end{pgfonlayer}

        \begin{pgfonlayer}{main}
            \node[substation] at (-3, 0){\textbf{Sub}};
            \node[bus, anchor=west, minimum width=0.1cm, minimum height=0.75cm, inner sep=0] (bus) at (-1.5, 0){};
            \node[bus, anchor=west, minimum width=0.1cm, minimum height=0.75cm, inner sep=0] (bus) at (0, 0){};
            \node[bus_b, anchor=west, minimum width=0.1cm, minimum height=1cm, inner sep=0] (bus) at (1.5, 0){};

            \node[label_tiny] at (1.6, -0.75) {\textcolor{blue}{$\text{Bus}_{i}$}};
        \end{pgfonlayer}


        \begin{pgfonlayer}{foreground}
            \draw[line width=0.5mm, draw = green] (-3.125, -0.375) -- (-3, -0.5);
            \draw[line width=0.5mm, draw = green] (-3.025, -0.5) -- (-2.875, -0.25);
            \draw[line width=0.5mm, draw = green] (-1.625, -0.375) -- (-1.5, -0.5);
            \draw[line width=0.5mm, draw = green] (-1.525, -0.5) -- (-1.375, -0.25);
            \draw[line width=0.5mm, draw = green] (-0.125, -0.375) -- (0, -0.5);
            \draw[line width=0.5mm, draw = green] (-0.025, -0.5) -- (0.125, -0.25);
            \draw[line width=0.5mm, draw = green] (1.375, -0.45) -- (1.5, -0.575);
            \draw[line width=0.5mm, draw = green] (1.475, -0.575) -- (1.625, -0.325);

            \draw[line width=0.5mm, draw = green] (-1.875, 0) -- (-1.75, -0.125);
            \draw[line width=0.5mm, draw = green] (-1.775, -0.125) -- (-1.625, 0.125);
            \draw[line width=0.5mm, draw = green] (-0.375, 0) -- (-0.25, -0.125);
            \draw[line width=0.5mm, draw = green] (-0.275, -0.125) -- (-0.125, 0.125);

            \draw[line width=0.5mm, draw = red] (1.15, -0.125) -- (1.4, 0.125);
            \draw[line width=0.5mm, draw = red] (1.15, 0.125) -- (1.4, -0.125);
        \end{pgfonlayer}
    \end{tikzpicture}
        \vspace{-2em} 
        \label{fig:bus_not_working_line}
        \caption{}
    \label{fig:radial_network_bus_not_working_ex_1}
    \end{subfigure}
    \hfill
    \begin{subfigure}[b]{0.45\textwidth}
        \centering
    \begin{tikzpicture}[
        every node/.style={font=\small},
        bus/.style={draw, fill=black, thick},
        bus_b/.style={draw=darkred, fill=darkred, thick},
        substation/.style={draw, fill=black, thick, circle, minimum size=0.5cm, text=white, align=center, font=\scriptsize},
        arrow/.style={-Latex, thick},
        label/.style={font=\small},
        label_tiny/.style={font=\scriptsize, fill = lightblue, inner sep = 0},
        label_prob_r/.style={font=\small, text=darkred},
        label_prob_g/.style={font=\small, text=darkgreen}
        ]

        \pgfdeclarelayer{background}
        \pgfdeclarelayer{foreground}
        \pgfsetlayers{background,main,foreground}

        \begin{pgfonlayer}{background}
            \draw[thick] (-3, 0) -- (-2.35, 0);
            \draw[thick, dashed] (-2.35, 0) -- (-1.85, 0);
            \draw[thick] (-1.85, 0) -- (-1.05, 0);
            \draw[thick, dashed] (-1.05, 0) -- (-0.4, 0);
            \draw[thick] (-0.4, 0) -- (0.5, 0);
            \draw[thick, dashed] (0.5, 0) -- (1.15, 0);
            \draw[thick] (1.15, 0) -- (2, 0);
            \draw[thick, dashed] (2, 0) -- (2.5, 0);
        \end{pgfonlayer}

        \begin{pgfonlayer}{main}
            \node[substation] at (-3, 0){\textbf{Sub}};
            \node[bus, anchor=west, minimum width=0.1cm, minimum height=0.75cm, inner sep=0] (bus) at (-1.5, 0){};
            \node[bus, anchor=west, minimum width=0.1cm, minimum height=0.75cm, inner sep=0] (bus) at (0, 0){};
            \node[bus_b, anchor=west, minimum width=0.1cm, minimum height=1cm, inner sep=0] (bus) at (1.5, 0){};

            \node[label_tiny] at (1.6, -0.75) {\textcolor{blue}{$\text{Bus}_{i}$}};
        \end{pgfonlayer}

        \begin{pgfonlayer}{foreground}
            \draw[line width=0.5mm, draw = red] (1.5, 0.375) -- (1.75, 0.625);
            \draw[line width=0.5mm, draw = red] (1.5, 0.625) -- (1.75, 0.374);
        \end{pgfonlayer}

        \begin{pgfonlayer}{foreground}
            \draw[line width=0.5mm, draw = green] (-3.125, -0.375) -- (-3, -0.5);
            \draw[line width=0.5mm, draw = green] (-3.025, -0.5) -- (-2.875, -0.25);
            \draw[line width=0.5mm, draw = green] (-1.625, -0.375) -- (-1.5, -0.5);
            \draw[line width=0.5mm, draw = green] (-1.525, -0.5) -- (-1.375, -0.25);
            \draw[line width=0.5mm, draw = green] (-0.125, -0.375) -- (0, -0.5);
            \draw[line width=0.5mm, draw = green] (-0.025, -0.5) -- (0.125, -0.25);

            \draw[line width=0.5mm, draw = green] (-1.875, 0) -- (-1.75, -0.125);
            \draw[line width=0.5mm, draw = green] (-1.775, -0.125) -- (-1.625, 0.125);
            \draw[line width=0.5mm, draw = green] (-0.375, 0) -- (-0.25, -0.125);
            \draw[line width=0.5mm, draw = green] (-0.275, -0.125) -- (-0.125, 0.125);
            \draw[line width=0.5mm, draw = green] (1.125, 0) -- (1.25, -0.125);
            \draw[line width=0.5mm, draw = green] (1.225, -0.125) -- (1.375, 0.125);

        \end{pgfonlayer}
        \end{tikzpicture}
    \vspace{-2em}
    \caption{}
\label{fig:radial_network_bus_not_working_ex_2}
    \end{subfigure}
    \caption{Radial Network Failure Towards Bus $i$ Examples: Fig. \ref{fig:radial_network_bus_not_working_ex_1} illustrates the failure of Bus $i$ due to a disruption in the lines connecting the substation to Bus $i$, while Fig. \ref{fig:radial_network_bus_not_working_ex_2} demonstrates the failure of Bus $i$ itself.}
    \label{fig:radial_network_bus_not_working_ex}
    
    \label{fig:radial_network_bus_i_status}
    
\end{figure}

\begin{definition}[Reliability/Unreliability]
The \emph{reliability} of a component at time $t$ is defined as the probability of the component not failing during the time interval $[0, t]$. 
The \emph{unreliability} of buses and lines are defined as the probability that the component will fail \emph{by time $t$},  
\begin{align*}
    F_i^{\text{b}}(t) &= 1 - Re_i^{\text{b}}(t) \quad i  \in \mathcal{N},  \\
    F_j^{\text{l}}(t) &= 1 - Re_j^{\text{l}}(t) \quad j \in \mathcal{N}^+.
\end{align*}
\end{definition}

\begin{definition}[Interval unreliability]
The \textit{interval unreliability} is the probability that a component fails \emph{in} time step $t$, i.e., between the end of $t-1$ and the end of $t$. Formally:
\begin{align*}
    \text{Pr}_i^{\text{b}}(t) &= F_i^{\text{b}}(t) - F_i^{\text{b}}(t-1) = Re_i^{\text{b}}(t-1) - Re_i^{\text{b}}(t),\ i \in \mathcal{N}\\
    \text{Pr}_j^{\text{b}}(t) &= F_j^{\text{b}}(t) - F_j^{\text{b}}(t-1) = Re_j^{\text{b}}(t-1) - Re_j^{\text{b}}(t),\ j \in \mathcal{N}^+.
\end{align*}
\end{definition}
Using these definitions and assumptions, the term $\big( 1 - \rm{Pr}_{\textit{i,t}}^{\rm b} \big)$ in \eqref{eq:reli_measure} is the the probability that a bus $i$ itself is operational (not failed) during  time step $t$.
Term $\left( 1 - \rm{Pr}_{\textit{j,t}}^{\rm l} \right)$ is the probability that upstream line $j$ of bus $i$ operational during time step $t$.
As a result, $\prod_{j \in \rm{MCS}_{b_i}} \left( 1 - \rm{Pr}_{\textit{j,t}}^{\rm l} \right)$ is the probability that all upstream lines that serve bus $i$ are operational during time step $t$.
Finally,  $\left( 1 - \rm{Pr}_{\textit{i,t}}^{\rm b} \right) \times \prod_{j \in \rm{MCS}_{b_i}} \left( 1 - \rm{Pr}_{\textit{j,t}}^{\rm l} \right)$ is the resulting probability that bus $i$ is operational \textit{and} has an operational connection to the substation during time step $t$.
We also refer to Fig.~\ref{fig:radial_network_reliability_modelling} for illustration of the model notation. 

\begin{remark}
We note the we do not explicitly model fault isolation and service restoration via tie line switches (TS). 
While this is a common operational practice in distribution system operations, we opt to not include TS switching events in our model for two main reasons: 
(i) Our model remains useful because even if a component failure can be isolated and service can be restored through TS switching, local service will be interrupted for a brief period of time (\cite{milad2023}), which leads to the potentially costly de-energizing of equipment. Some DG may not be able to immediately and automatically re-connect to the grid, even if service is restored.
(ii) The focus of this paper is the modeling, optimization, and parameter estimation of the context- and decision-aware reliability functions. We consider the additional complexity in modeling and solving the resulting problem to be beyond the scope of the intention of this paper. We discuss future reserach avenues that include TS operation in Section~\ref{sec:conclusion} below.
\end{remark}

The values of $\Omega_{i, t}^{\rm b}$, $\rm{Pr}_{\textit{i,t}}^{\rm b}$, and $\rm{Pr}_{\textit{j,t}}^{\rm l}$, $i \in \mathcal{N}, j \in \mathcal{N}^{+}$, $t \in \mathcal{T}$ in \eqref{eq:reli_measure} depend on the system state.
Failure cost $\Omega_{i, t}^{\rm b}$ are the expected cost resulting from the system's inability to serve load and/or accommodate net-generation from distributed resources during in the event of component failure.
Hence, $\Omega_{i, t}^{\rm b}$ depend on the resources dispatch as: 
\begin{align*}
    \Omega_{0, t}^{\rm b} &= w_{\text{b}_0}  p_{0, t}, \\
    \Omega_{i, t}^{\rm b} &= w_{i}^{\rm c} p_{i, t}^c + \bm{w}_{i}^{T} \bm{p}_{i, t} \nonumber \\
    &= w_{i}^{\rm c} p_{i, t}^c + w_{i}^{\rm DG} p_{i, t}^{\rm DG} + w_{i}^{\rm B, c} p_{i, t}^{\rm B, c} + w_{i}^{\rm B, d} p_{i, t}^{\rm B, d} + w_{i}^{\rm DR} p_{i, t}^{\rm DR}. 
\end{align*}
The failure cost at all other buses $i\in\mathcal{N}^+$ are modeled as a weighted sum of power consumption $p_{i,t}^{\rm c}$ and generation $p_{i,t}^{\diamond},\ \diamond=\{\rm DG,\ (B,c),\ (B,d),\ DR\}$.
Cost $w_i^{\diamond},\ \diamond=\{\rm DG,\ (B,c),\ (B,d),\ DR\}$ can be chosen to reflect value-of-lost-load with nodal resolution (e.g., to model higher-risk loads at buses with vulnerable demographics or critical infrastructure (\cite{eckstrom2022outing}) or model penalty payments for unused generation potential. 

The probability of component failures depend on both ambient and system conditions (\cite{billinton2005consideration,billinton2006distribution,hani2023,luca2022}). For analyzing the failure time of a component when the failure time is dependent on some time-independent covariates, the Cox proportional hazards model (PH) is suitable as detailed in \cite{john2023fifty}. By taking advantage of the fact that our time steps are discrete, it can be shown that, given the probability of a failure event in each time step is small, e.g., less than $0.1$ (\cite{abbott1985logistic}), logistic regression can be adapted for the analysis. This adaptation leads to parameter estimates close to those obtained by the PH model, which we explain in detail in Appendix \ref{appendix:log_reg_cox}.
Give that generalized logistic regression provides a probability measure,
we model this dependency through a logistic function as: 
\begin{equation} \label{eq:logistic_ori}
            \rm{Pr} = \frac{1}{1 + e^{-(\beta_0 + \beta_1 X_1 + \beta_2 X_2)}}, 
\end{equation}
where $\beta_0$ is the intercept, $\beta_1$ and $\beta_2$ are coefficients for the context variables $X_1$ and $X_2$ respectively.
The values of $\beta_0$, $\beta_1$ and $\beta_2$ can be estimated through maximum likelihood estimation (MLE) and Hamiltonian Markov Chain (HMC) (\cite{duane1987hybrid}), detailed in Section \ref{sec:4_case_study}. 
We rewrite \eqref{eq:logistic_ori} as
\begin{equation*}
    \rm{Pr} = \frac{1}{1 + \lambda e^{-(\beta_1 X_1 + \beta_2 X_2)}}, 
\end{equation*}
where $\lambda = e^{-\beta_0}$ can be interpreted as the inverse of the baseline odds. 
Specifically, it represents the ratio of the probability of the component not failing to the probability of the component failing when all covariates are zero.

In our model, we set variables $X_1$ to the net power demand ($\tilde{p}^{\text{b}}$) or current magnitude ($l$) of each component and variable $X_2$ to the the ambient temperature.
We note that this a modeling choice for this paper. The model can be modified to incorporate other covariates to component reliability as required by the modeler or decision-maker.
We assume that $\rm{Pr}_{*}^{*}(t)$ is monotonically increasing with ambient temperature and net power demand ($\tilde{p}^{\text{b}}$) or current magnitude ($l$) of the components. 
The resulting component failure probabilities are:
\begin{itemize}
\item Interval unreliability of substation:
    \begin{equation}
        \rm{Pr}_{0,\textit{t}}^{\rm b} = \frac{1}{1 + \lambda_{\text{b}, 0}  e^{-\left(\beta_{1, 0}^{\rm b} \left|p_{0, t}\right| + \beta_{2, 0}^{\rm b} {\rm AT}_t\right)}}. \label{eq:fail_sub}
    \end{equation}
    \item Interval unreliability of bus $i$, $i \in \mathcal{N}^{+}$:
        \begin{equation}
            \rm{Pr}_{\textit{i,t}}^{\rm b} = \frac{1}{1 + \lambda_{\text{b}, i}  e^{-\left(\beta_{1, i}^{\rm b}  \left|- p_{i, t}^{\rm c} + p_{i, t}^{\rm DG} - p_{i, t}^{\rm B, c} + p_{i, t}^{\rm B, d} + p_{i, t}^{\rm DR}\right| + \beta_{2, i}^{\rm b}  {\rm AT}_t\right)}}. \label{eq:fail_bus}
        \end{equation}
    \item Interval unreliability of line $i$, $i \in \mathcal{N}^{+}$:
        \begin{equation}
            \rm{Pr}_{\textit{i,t}}^{\rm l} = \frac{1}{1 + \lambda_{\text{l}, i}  e^{-\left(\beta_{1, i}^{\rm l} l_{i, t} + \beta_{2, i}^{\rm l}  {\rm AT}_t\right)}}. \label{eq:fail_line}
        \end{equation}
\end{itemize}

\subsection{Computational approach}\label{sec:3_comp}

The CRM objective is non-linear and non-convex due to the logistic functions in \eqref{eq:reli_measure}. We adopt sequential convex programming (SCP), a local optimization method to handle the non-convex problems (\cite{schittkowski1995sequential}). It initially addresses the convex parts of the problem precisely, then approximates the non-convex parts using convex functions that are sufficiently accurate within a local region. The process is repeated until the convergence conditions is met. To solve the CRM, we use the solution from the CM as an initial guess. The convex portion of the problem is then obtained by linearizing the objective of the CRM, as illustrated in Fig.~\ref{fig:scp_iterative_approach}. 

\begin{figure}
    \begin{tikzpicture}[
        every node/.style={font=\small},
        process/.style={draw=gray!20, fill=gray!20, minimum width=1.8cm, minimum height=0.8cm, inner sep=0, align=center},
        process_crm/.style={draw=gray!30, fill=gray!30, minimum width=1.8cm, minimum height=0.4cm, inner sep=0, align=center},
        arrow/.style={-Latex, thick},
        label/.style={font=\small, text=darkred}
        ]

        \pgfdeclarelayer{background}
        \pgfdeclarelayer{foreground}
        \pgfsetlayers{background,main,foreground}

        \begin{pgfonlayer}{main}
            \node[process, anchor=west] (block) at (0, 0.15) {Initial\\guess};
            \node[process_crm, anchor=west] (block) at (0, -0.45) {CM};
            \node[process, anchor=west] (block) at (3.5, 0.15) {non-convex\\approx};
            \node[process_crm, anchor=west] (block) at (3.5, -0.45) {CRM-APPX};
            \node[process, anchor=west] (block) at (7, 0) {Convergence\\check};
            \node[process, anchor=west] (block) at (10.5, 0) {End\\Output};
        \end{pgfonlayer}

        \begin{pgfonlayer}{background}
            \draw[arrow] (0.9, 0) -- (3.5, 0);
            \draw[arrow] (4.4, 0) -- (7, 0);
            \draw[arrow] (7.9, 0) -- (10.5, 0);
            \draw[thick] (7.9, 0) -- (7.9, 1.0);
            \draw[thick] (4.4, 1.0) -- (7.9, 1.0);
            \draw[arrow] (4.4, 1.0) -- (4.4, 0.6);
        \end{pgfonlayer}

        \begin{pgfonlayer}{main}
            \node[label] at (9.1, 0.15) {Yes};
            \node[label] at (8.2, 0.6) {No};
        \end{pgfonlayer}
        
    \end{tikzpicture}
\caption{SCP Iterative Approach}
\label{fig:scp_iterative_approach}
\end{figure}
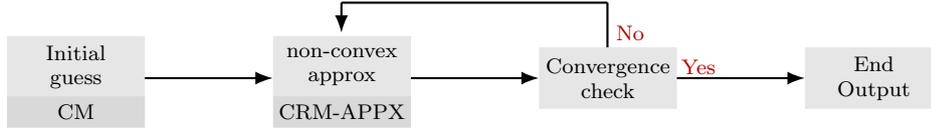

In the following sections we provide details for the sequential convex programming approach. 
Section~\ref{sec:3_comp_abs} first discusses handling the absolute value functions in \eqref{eq:fail_sub}, \eqref{eq:fail_bus}.
The discussed approach also guarantees that the objective function remains differentiable throughout the open domain of the variables.
We use this property as discussed in Section~\ref{sec:3_comp_lin} to create a linearized approximation of the non-convex function.
Section \ref{sec:3_comp_alg} then summarizes our procedure to numerically solve CRM.

\subsubsection{Handling absolute value functions}\label{sec:3_comp_abs}
In CRM, during each time step $t$, function $\mathcal{C}_{t}^{\text{eens}}$ includes $\rm{Pr}_{i, t}^{\rm b},\ i \in \mathcal{N}$, which contains absolute value functions as per \eqref{eq:fail_sub}, \eqref{eq:fail_bus}. 
Specifically, these values are the absolute net power demand of the substation at time $t$ ($| p_{0, t} |$) and the absolute net power demand of bus $i\in\mathcal{N}^+$ at time $t$ ($|\! - p_{i, t}^{\rm c} + p_{i, t}^{\rm DG} - p_{i, t}^{\rm B, c} + p_{i, t}^{\rm B, d} + p_{i, t}^{\rm DR} |$), respectively. 
We introduce auxiliary variables for these values defined as:
\begin{align} 
    \tilde{p}_{0, t}^{\rm{b}} &= \left| p_{0, t} \right|, \label{eq:addition_var_sub}\\ 
    \tilde{p}_{i, t}^{\rm{b}} &= \left| - p_{i, t}^{\rm c} + p_{i, t}^{\rm DG} - p_{i, t}^{\rm B, c} + p_{i, t}^{\rm B, d} + p_{i, t}^{\rm DR} \right|. \label{eq:addition_var_bu}
\end{align}
Using \eqref{eq:addition_var_sub}, \eqref{eq:addition_var_bu} we rewrite \eqref{eq:fail_sub}, \eqref{eq:fail_bus} as: 
\begin{equation*} 
    {\tilde{\rm{Pr}}_{\textit{i,t}}^{\rm{b}}} = \frac{1}{1 + \lambda_{\text{b}, i} e^{-\left(\beta_{1, i}^{\rm b} \tilde{p}_{i, t}^{\rm{b}} + \beta_{2, i}^{\rm b} {\rm AT}_t\right)}},
\end{equation*}
where $i \in \mathcal{N}$, and $\mathcal{C}_{t}^{\text{eens}}(\bm{v})$ as 
\begin{equation*}
    \tilde{\mathcal{C}}_{t}^{\text{eens}}(\tilde{\bm{v}}) = \left(\Omega_{0, t}^{\rm b}(\tilde{\bm{v}}) \tilde{\text{Pr}}_{0,\textit{t}}^{\rm b}(\tilde{\bm{v}})\right) +\! \sum_{i \in \mathcal{N}^{+}}\! \Omega_{i, t}^{\rm b} \left(\!1\! - \!\left(\! 1 - \tilde{\text{Pr}}_{\textit{i,t}}^{\rm b}(\tilde{\bm{v}}) \right)\!\!\!\!\! \prod_{j \in \rm{MCS}_{b_i}} \!\!\!\!\!\left( 1 - \text{Pr}_{\textit{j,t}}^{\rm l}(\tilde{\bm{v}} \right) \!\!\right).
\end{equation*}
Because $\mathcal{C}_{t}^{\text{eens}}$ is monotonically increasing with respect to $\tilde{p}_{i, t}^{\rm{b}}$, for $i \in \mathcal{N}$ and CRM is a minimization problem,
we can replace equality constraints \eqref{eq:addition_var_sub}, \eqref{eq:addition_var_bu} with their epigraph formulation as:
\begin{align} 
    \tilde{p}_{0, t}^{\rm{b}} &\geq p_{0, t}, \label{eq:addition_var_sub_const_p}\\
    \tilde{p}_{0, t}^{\rm{b}} &\geq - p_{0, t},  \label{eq:addition_var_sub_const_n}\\
    \tilde{p}_{i, t}^{\rm{b}} &\geq - p_{i, t}^{\rm c} + p_{i, t}^{\rm DG} - p_{i, t}^{\rm B, c} + p_{i, t}^{\rm B, d} + p_{i, t}^{\rm DR},  \label{eq:addition_var_bu_const_p}\\
    -\tilde{p}_{i, t}^{\rm{b}} &\geq p_{i, t}^{\rm c} - p_{i, t}^{\rm DG} + p_{i, t}^{\rm B, c} - p_{i, t}^{\rm B, d} - p_{i, t}^{\rm DR}. \label{eq:addition_var_bu_const_n}
\end{align}

\subsubsection{Linearization}\label{sec:3_comp_lin}

\begin{claim}\label{claim:non_convex}
    The expression $\mathcal{C}^{\text{eens}}(\bm{v})$ is non-convex (non-concave) in $\bm{v}$.
\end{claim}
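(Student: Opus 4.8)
The plan is to prove non-convexity and non-concavity simultaneously by restricting $\mathcal{C}^{\text{eens}}$ to a single coordinate line and showing that the restriction is a strictly positively-scaled logistic sigmoid, which is itself neither convex nor concave. This suffices because a convex (respectively concave) function of $\bm{v}$ must be convex (respectively concave) along every line in its domain, so exhibiting one coordinate direction along which the restriction fails both properties settles the claim. I read the parenthetical ``(non-concave)'' as asserting that $\mathcal{C}^{\text{eens}}$ is \emph{neither} convex nor concave, and the single restriction I construct establishes both.

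First I would fix a time step $t$ and a bus $i \in \mathcal{N}^+$ with strictly positive failure cost $\Omega_{i,t}^{\text{b}} > 0$, and hold every component of $\bm{v}$ fixed except the squared line current $l_{i,t}$. Inspecting \eqref{eq:reli_measure} together with \eqref{eq:fail_line}, the only primitive depending on $l_{i,t}$ is the line interval unreliability $\text{Pr}_{i,t}^{\text{l}} = \sigma(\beta_{1,i}^{\text{l}} l_{i,t} + \beta_{2,i}^{\text{l}} \text{AT}_t)$, with $\sigma(z) = (1 + \lambda_{\text{l},i} e^{-z})^{-1}$; the weights $\Omega_{k,t}^{\text{b}}$ and the bus probabilities $\text{Pr}_{k,t}^{\text{b}}$ do not involve $l_{i,t}$. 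Because line $i$ lies on the path from the substation to bus $i$, we have $i \in \text{MCS}_{\text{b}_i}$, so $\text{Pr}_{i,t}^{\text{l}}$ enters the product term of every bus $k$ with $i \in \text{MCS}_{\text{b}_k}$. For each such $k$ the bracketed factor $1 - (1 - \text{Pr}_{k,t}^{\text{b}})(1 - \text{Pr}_{i,t}^{\text{l}}) \prod_{j \in \text{MCS}_{\text{b}_k},\, j \neq i}(1 - \text{Pr}_{j,t}^{\text{l}})$ is affine and nondecreasing in $\text{Pr}_{i,t}^{\text{l}}$ with nonnegative slope. Collecting terms, the restriction is $\mathcal{C}^{\text{eens}}(l_{i,t}) = C_1 + C_2\, \sigma(\beta_{1,i}^{\text{l}} l_{i,t} + \beta_{2,i}^{\text{l}} \text{AT}_t)$, where $C_1$ absorbs all $l_{i,t}$-independent terms and $C_2 = \sum_{k:\, i \in \text{MCS}_{\text{b}_k}} \Omega_{k,t}^{\text{b}} (1 - \text{Pr}_{k,t}^{\text{b}}) \prod_{j \neq i}(1 - \text{Pr}_{j,t}^{\text{l}}) > 0$, since it contains at least the strictly positive $k = i$ summand whenever the component probabilities lie in $(0,1)$ and $\Omega_{i,t}^{\text{b}} > 0$.

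Next I would analyze the sigmoid curvature. Writing $s = \sigma(z)$, one has $\sigma'(z) = s(1 - s) > 0$ and $\sigma''(z) = s(1 - s)(1 - 2s)$, so $\sigma''$ is strictly positive for $s < 1/2$ and strictly negative for $s > 1/2$; as $s$ sweeps all of $(0,1)$ over $z \in \mathbb{R}$, $\sigma''$ changes sign. Hence $\tfrac{d^2}{dl_{i,t}^2}\mathcal{C}^{\text{eens}} = C_2 (\beta_{1,i}^{\text{l}})^2 \sigma''$ also changes sign, using $C_2 > 0$ and $\beta_{1,i}^{\text{l}} \neq 0$. A twice-differentiable scalar function whose second derivative attains both strictly positive and strictly negative values is neither convex nor concave; therefore the restriction, and with it $\mathcal{C}^{\text{eens}}$ itself, is neither convex nor concave in $\bm{v}$.

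I expect the main obstacle to be the bookkeeping establishing $C_2 > 0$ rather than merely $C_2 \geq 0$: one must invoke a genuine non-degeneracy condition (a positive failure cost at some served bus and component reliabilities strictly interior to $(0,1)$) so that the sigmoid is multiplied by a strictly positive constant and its curvature is not flattened to the affine case. A secondary point needing care is the clean application of the line-restriction principle, together with the verification that no other summand in \eqref{eq:reli_measure} depends on $l_{i,t}$, so that the restriction is exactly the claimed single-sigmoid form.
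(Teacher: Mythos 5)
Your structural reduction is right as far as it goes: holding everything but $l_{i,t}$ fixed, the only piece of \eqref{eq:reli_measure} that moves is $\text{Pr}_{i,t}^{\text{l}}$ from \eqref{eq:fail_line}, and the restriction does collapse to $C_1 + C_2\,\sigma\bigl(\beta_{1,i}^{\text{l}} l_{i,t} + \beta_{2,i}^{\text{l}}\text{AT}_t\bigr)$ with $C_2\ge 0$, while $\sigma''$ changes sign exactly where $\sigma=1/2$. The gap is in the final step: to conclude that this restriction is \emph{neither} convex nor concave you need the inflection point --- the value of $l_{i,t}$ at which the line failure probability equals one half --- to lie in the \emph{interior} of the admissible range of $l_{i,t}$. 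But $l_{i,t}$ is a squared current magnitude, hence nonnegative and bounded above through \eqref{eq:const_line_1}--\eqref{eq:const_conic_3}, and the model is calibrated so that interval unreliabilities are tiny (order $10^{-5}$) throughout operation. So $z=\beta_{1,i}^{\text{l}} l_{i,t}+\beta_{2,i}^{\text{l}}\text{AT}_t$ never crosses $\ln\lambda_{\text{l},i}$ on the relevant domain, the sigmoid stays on one side of its inflection, and your one-dimensional restriction is one-sidedly curved there --- it refutes at most one of convexity and concavity, not both. Your phrase ``as $s$ sweeps all of $(0,1)$ over $z\in\mathbb{R}$'' silently assumes an unbounded sweep that the variable does not have, and the non-degeneracy condition you flag as the main obstacle ($C_2>0$) is not actually the hard part.

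The paper's Appendix~\ref{appendix:proof_non_convex} argument avoids this by using a genuinely two-dimensional mechanism: it keeps both $p_{1,1}^{\rm DG}$ and $l_{1,1}$ free, so the decision-dependent failure cost $\Omega_{1,1}^{\rm b}$ (affine in $p_{1,1}^{\rm DG}$) multiplies the probability bracket, and the resulting cross partial $\partial^2/\partial p_{1,1}^{\rm DG}\partial l_{1,1}$ makes the $2\times 2$ Hessian \eqref{eq:hessian_r_1_1} indefinite at a single feasible point ($\mathbf{x}^T H\mathbf{x}$ takes both signs for $\mathbf{x}=[0.1,0.4]^T$ versus $[0.4,0.1]^T$). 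That $\Omega\times\text{Pr}$ coupling produces indefiniteness even when every failure probability stays far from $1/2$, which is precisely the regime your single-coordinate argument cannot handle. To repair your proof you would either have to verify that the failure probability actually crosses $1/2$ within the feasible current range (it does not for the given parametrization), or move to a two-variable restriction that exposes the product of the affine cost factor and the sigmoid, as the paper does.
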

\noindent
We provide a formal discussion of Claim \ref{claim:non_convex} in Appendix \ref{appendix:proof_non_convex}. 
To efficiently solve CRM, we employ a sequential approach that utilizes a linearization of $\tilde{\mathcal{C}}_{t}^{\text{eens}}(\tilde{\bm{v}})$ which we define as CRM-APPX:
\begin{equation*}
    \text{Obj}_{\text{CRM-APPX}}: \min \left[ \sum_{t \in \mathcal{T}}\mathcal{C}(t) + \sum_{t \in \mathcal{T}} \mathcal{L}\big[\tilde{\mathcal{C}}_{t}^{\text{eens}}(\tilde{\bm{v}})|\tilde{\bm{v}}^{\star}\big] \right], 
\end{equation*}
where $\mathcal{L}\left[*\right|\bm{v}]$ indicates the linearization around $\bm{v}$.
Specifically, we use multivariate first-order Taylor expansion.
For a given expansion point $\tilde{\bm{v}}_t^{\star} = \left(\tilde{p}_{0, t}^{\rm{b}^{\star}}, \tilde{\bm{v}}_{i, t}^{\star}, i \in \mathcal{N}^{+} \right)$, where
\begin{equation*}
    \tilde{\bm{v}}_{i, t}^{\star} = \left( {p_{i, t}^{\rm DG}}^{\star}, {p_{i, t}^{\rm B, c}}^{\star}, {p_{i, t}^{\rm B, d}}^{\star}, {p_{i, t}^{\rm DR}}^{\star}, \tilde{p}_{i, t}^{\rm{b}^{\star}}, l^{\star}_{i, t} \right) = ({\tilde{v}_{i, t}^{j^{\star}}}, j = 1, 2, \cdots, 6).
\end{equation*}
we can write
$\mathcal{L}\left[ \tilde{\mathcal{C}}_{t}^{\text{eens}}(\tilde{\bm{v}})|\tilde{\bm{v}}_t^{\star}\right]$ at point $\tilde{\bm{v}}_t^{\star}$ as:
\begin{align*} 
    &\tilde{\mathcal{C}}_{t}^{\text{eens}}(\tilde{\bm{v}})|\tilde{\bm{v}}_t^{\star} \approx \mathcal{L}\left[\tilde{\mathcal{C}}_{t}^{\text{eens}}(\tilde{\bm{v}})|\tilde{\bm{v}}^{\star}_t\right] = \tilde{\mathcal{C}}_{t}^{\text{eens}}\left( \tilde{\bm{v}}_t^{\star} \right)\notag \\
    +& \left[ \sum_{i = 1}^{i = N}\sum_{j = 1}^{j = 6} \frac{\partial \tilde{\mathcal{C}}_{t}^{\text{eens}}}{\partial \tilde{v}_{i,t}^{j}}\left( \tilde{\bm{v}}^{\star}_t \right) \left( \tilde{v}_{i,t}^{j} - \tilde{v}_{i, t}^{j^{\star}} \right) \right] + \left[ \frac{\partial {\tilde{\mathcal{C}}_{t}^{\text{eens}}}}{\partial \tilde{p}_{0, t}^{\rm{b}}}\left(\tilde{\bm{v}}^{\star}_t\right) \left(\tilde{p}_{0, t}^{\rm{b}} - \tilde{p}_{0, t}^{\rm{b}^{\star}}\right) \right],  
\end{align*}
where
\begin{equation*}
    \tilde{\bm{v}}_{i, t} = \left( p_{i, t}^{\rm DG}, p_{i, t}^{\rm B, c}, p_{i, t}^{\rm B, d}, p_{i, t}^{\rm DR}, \tilde{p}_{i, t}^{\rm{b}}, l_{i, t} \right) = (\tilde{v}_{i, t}^{j}, j = 1, 2, \cdots, 6). 
\end{equation*}

\noindent The following claim establishes that the linearization of the objective function of the CRM remains consistent regardless of whether we expand around the point $\tilde{\bm{v}}^*$ with respect to $\bm{v}$ or $\tilde{\bm{v}}$.
\begin{claim}\label{claim:add_var_equal}
    Given ${\tilde{p}_{0, t}^{\rm{b}}} = \left|p_{0, t} \right|$ and $\tilde{p}_{i, t}^{\rm{b}} = \left| - p_{i, t}^{\rm c} + p_{i, t}^{\rm DG} - p_{i, t}^{\rm B, c} + p_{i, t}^{\rm B, d} + p_{i, t}^{\rm DR} \right|$:
    \begin{equation*}
        \mathcal{L}\left[\mathcal{C}_{t}^{\text{eens}}(\bm{v})|\bm{v}_t^{\star}\right] = \mathcal{L}\left[ \tilde{\mathcal{C}}_{t}^{\text{eens}}(\tilde{\bm{v}})|\tilde{\bm{v}}_t^{\star}\right]. 
    \end{equation*}
\end{claim}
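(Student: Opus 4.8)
The plan is to read Claim~\ref{claim:add_var_equal} as an instance of the chain rule, namely that a first-order Taylor expansion commutes with composition. Let $\phi$ denote the map from the CM variables $\bm{v}$ to the CRM-APPX variables $\tilde{\bm{v}}$ that sets $\tilde{p}_{0,t}^{\rm b}=\left|p_{0,t}\right|$ and $\tilde{p}_{i,t}^{\rm b}=\left|-p_{i,t}^{\rm c}+p_{i,t}^{\rm DG}-p_{i,t}^{\rm B,c}+p_{i,t}^{\rm B,d}+p_{i,t}^{\rm DR}\right|$ while acting as the identity on every remaining coordinate (the power variables $p_{i,t}^{\rm DG},p_{i,t}^{\rm B,c},\dots$ and the currents $l_{i,t}$). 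By construction $\mathcal{C}_t^{\text{eens}}(\bm{v})=\tilde{\mathcal{C}}_t^{\text{eens}}(\phi(\bm{v}))$, since re-substituting each $\tilde{p}^{\rm b}$ by its absolute-value definition turns $\tilde{\text{Pr}}_{i,t}^{\rm b}$ back into $\text{Pr}_{i,t}^{\rm b}$ and leaves $\Omega_{i,t}^{\rm b}$ and $\text{Pr}_{j,t}^{\rm l}$ untouched. Taking $\tilde{\bm{v}}_t^{\star}=\phi(\bm{v}_t^{\star})$ as the matched expansion point, the asserted identity is exactly $\mathcal{L}\!\left[\tilde{\mathcal{C}}_t^{\text{eens}}\circ\phi \,\middle|\, \bm{v}_t^{\star}\right]=\mathcal{L}\!\left[\tilde{\mathcal{C}}_t^{\text{eens}}\,\middle|\,\tilde{\bm{v}}_t^{\star}\right]$ evaluated along $\phi$.

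First I would confirm that $\phi$ is differentiable at $\bm{v}_t^{\star}$. Its only non-smooth components are the absolute values, and $\left|\cdot\right|$ is differentiable away from the origin with derivative $\operatorname{sign}(\cdot)$. Writing $x_{i,t}$ for the affine net-demand argument inside $\tilde{p}_{i,t}^{\rm b}$, differentiability at $\bm{v}_t^{\star}$ holds precisely when $p_{0,t}$ and each $x_{i,t}$ are nonzero at the expansion point, which is guaranteed by the open-domain differentiability established in Section~\ref{sec:3_comp_abs}. Under this condition the Jacobian of $\phi$ is the identity except on the $\tilde{p}^{\rm b}$-rows, where $\partial \tilde{p}_{i,t}^{\rm b}/\partial p=\operatorname{sign}(x_{i,t}^{\star})\,\partial x_{i,t}/\partial p$ and $\partial x_{i,t}/\partial p\in\{+1,-1\}$ are the fixed incidence signs.

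Next I would check the two ingredients of the chain-rule identity. The constant terms match because $\tilde{\bm{v}}_t^{\star}=\phi(\bm{v}_t^{\star})$ forces $\tilde{\mathcal{C}}_t^{\text{eens}}(\tilde{\bm{v}}_t^{\star})=\mathcal{C}_t^{\text{eens}}(\bm{v}_t^{\star})$. For the linear terms, every coordinate fixed by $\phi$ (the currents $l_{i,t}$ and the explicit occurrences of the power variables inside $\Omega_{i,t}^{\rm b}$) contributes identical partial derivatives on both sides, so only the dependence routed through $\tilde{p}_{i,t}^{\rm b}$ needs attention. On the CRM-APPX side this appears as $\frac{\partial \tilde{\mathcal{C}}_t^{\text{eens}}}{\partial \tilde{p}_{i,t}^{\rm b}}(\tilde{\bm{v}}_t^{\star})\big(\tilde{p}_{i,t}^{\rm b}-\tilde{p}_{i,t}^{\rm{b}^{\star}}\big)$, while on the CM side the same factor multiplies $\operatorname{sign}(x_{i,t}^{\star})\sum_p \frac{\partial x_{i,t}}{\partial p}(p-p^{\star})$. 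Because $x_{i,t}$ is affine in the power variables (with $p_{i,t}^{\rm c}$ constant), $\sum_p \frac{\partial x_{i,t}}{\partial p}(p-p^{\star})=x_{i,t}-x_{i,t}^{\star}$, and the linearization of $\tilde{p}_{i,t}^{\rm b}=|x_{i,t}|$ about $x_{i,t}^{\star}$ is $|x_{i,t}^{\star}|+\operatorname{sign}(x_{i,t}^{\star})(x_{i,t}-x_{i,t}^{\star})$; hence $\tilde{p}_{i,t}^{\rm b}-\tilde{p}_{i,t}^{\rm{b}^{\star}}$ and $\operatorname{sign}(x_{i,t}^{\star})(x_{i,t}-x_{i,t}^{\star})$ agree as linearized quantities and the two linear terms coincide. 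The identical computation on $\tilde{p}_{0,t}^{\rm b}=|p_{0,t}|$ handles the substation term.

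I expect the main obstacle to be conceptual rather than computational: both sides are affine models living in different variable spaces, so the equality must be understood after identifying $\tilde{p}^{\rm b}$ with its linearized definition through $\phi$, as made precise above. The single genuine hypothesis to flag is the non-differentiability of $\left|\cdot\right|$ at $0$: the argument requires every net-demand argument to be nonzero at the expansion point, which is exactly the reason Section~\ref{sec:3_comp_abs} works over the open domain. Away from that exceptional set the chain rule applies verbatim and Claim~\ref{claim:add_var_equal} follows.
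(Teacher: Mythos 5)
Your argument is essentially the paper's: the paper proves a general proposition that a first-order Taylor expansion is unchanged when auxiliary variables that are affine in the original variables are introduced (via exactly the chain-rule bookkeeping you carry out for $\phi$), and then applies it to $\tilde{p}^{\rm b}$. The only presentational difference is how the branch of $\left|\cdot\right|$ is fixed so that $\phi$ is locally affine --- you invoke differentiability of the absolute value away from the origin, while the paper observes that at an optimum of CRM each $\tilde{p}^{\rm b}$ coincides with one of the two affine epigraph bounds \eqref{eq:addition_var_sub_const_p}--\eqref{eq:addition_var_bu_const_n}; both devices serve the same purpose.
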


\begin{proof}
    See Appendix~\ref{appendix:proof_add_var_equal}
\end{proof}

\subsubsection{Iterative Approach} \label{sec:3_comp_alg}

Once $\rm{Obj}_{\rm CRM}$ is linearized to $\rm{Obj}_{\rm CRM-APPX}$, we use an iterative approach to refine the solution. The initial solution from the CM serves as a starting point. To handle the convex portion, the objective function of CRM is linearized at the current solution point. 
The process is repeated until convergence.
To control the step size in each iteration, we add a regularization $\text{Reg}(\tilde{\bm{v}})$ term to  $\rm{Obj}_{\rm CRM-APPX}$ and we denote the resulting objective as:
\begin{equation*}
    \text{Obj}_{\text{CRM-ITE}}: \min_{\tilde{\bm{v}}} \left[ \sum_{t \in \mathcal{T}}\mathcal{C}_t(\tilde{\bm{v}}) + \sum_{t \in \mathcal{T}} \mathcal{L}\left[\mathcal{C}_{t}^{\text{eens}}(\tilde{\bm{v}})\right] + \text{Reg}(\tilde{\bm{v}})\right].
\end{equation*}
Regularization $\text{Reg}(\tilde{\bm{v}})$ penalizes large changes in decision variables $\tilde{\bm{v}}$ between iterations and, as a result,
promotes gradual and controlled variable updates between iterations.
We define $\text{Reg}(\tilde{\bm{v}})$ as $\phi(k)||\tilde{\bm{v}}^* - \tilde{\bm{v}}||_2^2$, 
where $\phi(k)$ is a monotonically increasing function w.r.t. the $k^{th}$ iteration. 

Algorithm~\ref{alg:ite_approach} itemizes the steps of our computational approach.
We introduce three convergence criteria, where $k$ indicates the $k^{th}$ iteration:
\begin{enumerate}
    \item Total variable difference: Terminate if the total sum of the differences in variable values between the current iteration and the previous iteration is below a given threshold:
        \begin{equation} \label{eq:conv_var_diff}
            \left( \tilde{\bm{v}}[k] - \tilde{\bm{v}}[k-1] \right)^T \left( \tilde{\bm{v}}[k] - \tilde{\bm{v}}[k-1] \right) \le \epsilon_1.
        \end{equation}
    \item Total linearization deviation: Terminate if the total sum of differences between $\mathcal{C}^{\text{eens}}(\tilde{\bm{v}})$ at current iteration and $\mathcal{C}^{\text{eens}}(\tilde{\bm{v}})$ at previous iteration is smaller then a given threshold across all modelled time steps $t$:
        \begin{equation} \label{eq:conv_lin_diff}
            \left| \left( \text{Obj}_{\text{CRM-APPX}}[k] - \text{Obj}_{\text{CM}}[k]\right) - \left( \text{Obj}_{\text{CRM}}[k - 1] - \text{Obj}_{\text{CM}}[k - 1] \right) \right| \le \epsilon_2.
        \end{equation}
        
    \item Relative objective improvement: Terminate if the relative objective improvement between the previous and current iterations 
    are below a given threshold:
        \begin{equation} \label{eq:conv_ratio_diff}
            \frac{\left| \text{Obj}_{\text{CREM-APPX}}[k] - \text{Obj}_{\text{CREM-APPX}}[k-1] \right|}{\text{Obj}_{\text{CREM-APPX}}[k]} \leq \epsilon_3.
        \end{equation}
\end{enumerate}
Algorithm~\ref{alg:ite_approach} terminates when one of these criteria is met. 

\begin{algorithm}
\caption{Iterative Linearization Approach for Solving CRM}
\label{alg:ite_approach}
\begin{algorithmic}[1]
    \Statex \hspace*{-1.6em} \textbf{Input:} CM parameter set, CRM-ITE parameter set, maximum number of iteration: $k^{\text{max}}$
    \Statex \hspace*{-1.6em} \textbf{Output:} Optimized variable values: $\tilde{\bm{v}}_{\text{CRM}}$ and objective value: $\rm{Obj}_{\rm{CRM}}$
    \State Solve CM \Comment{\textit{initialization}}
    \State Obtain $\bm{v}_{\text{CM}}$ and $\text{Obj}_{\rm{CM}}$ 
    \State Obtain initial variable guess: $\tilde{\bm{v}}_{\text{CRM-ITE}}[0] \leftarrow \bm{v}_{\text{CM}}$ 
    \State Obtain initial objective value: 
    \Statex \qquad $\text{Obj}_{\rm{CRM-ITE}}[0] \leftarrow \text{Obj}_{\rm{CM}}$
    \Statex \qquad $\text{Obj}_{\rm{CRM-APPX}}[0] \leftarrow \text{Obj}_{\rm{CM}}$
    \For{$k=1,...,k^{\text{max}}$}  \Comment{\textit{solving iteratively}}
        \State linearize $\mathcal{C}^{\text{eens}}$ around $\tilde{\bm{v}}_{\text{CRM-ITE}}[k - 1]$
        \State Solve CRM-ITE
        \State Obtain $\tilde{\bm{v}}_{\text{CRM-ITE}}[k]$ and $\text{Obj}_{\rm{CRM-ITE}}[k]$
        \State Use $\tilde{\bm{v}}_{\text{CRM-ITE}}[k]$ to calculate the updated objective values:
        \Statex \qquad $\text{Obj}_{\text{CM}}[k]$, $\text{Obj}_{\text{CRM}}[k]$ and $\text{Obj}_{\text{CRM-APPX}}[k]$
        \If {$\text{total variable difference \eqref{eq:conv_var_diff}}  \leq \epsilon_1$ \textbf{or} \\ \hspace*{2.3em} $\text{total linearization deviation \eqref{eq:conv_lin_diff}} \leq \epsilon_2$ \textbf{or} \\
        \hspace*{2.3em} $\text{linearization ratio deviation \eqref{eq:conv_ratio_diff}} \leq \epsilon_3$} \Comment{\textit{check convergence}}
            \State \textbf{break}
        \EndIf
    \EndFor
    \State {\textbf{return}} $\tilde{\bm{v}}_{\text{CRM}}$, $\text{Obj}_{\text{CRM}}$
\end{algorithmic}
\end{algorithm}

\section{Case Study} \label{sec:4_case_study}
We now apply the proposed method to a detailed case study. 
Section~\ref{sec:4_exp_setting} details the case study dataset. 
Section~\ref{sec:4_para_est} discusses the process of estimating the coefficients for the generalized logistic regression model \eqref{eq:fail_sub}, \eqref{eq:fail_bus}, \eqref{eq:fail_line} that computes the failure probability of each component at each time step as a function of the relevant covariates.
Section~\ref{sec:case_study_result_discussion} presents and discusses the numerical results of the case study.

\subsection{Experiment description and parameters}\label{sec:4_exp_setting}
We consider the a one-day operation of an active distribution system. 
We assume that all system components in the power system are all in proper working condition and, for our scope of modeling a single day, disregard the effects of component aging. 
We use a modified version of the radial IEEE $33$-bus test feeder.
We use data provided by pandapower (\cite{thurner2018pandapower}) and manually add 
DERs.
Fig.~\ref{fig:33_bus_power_radial_system} in Appendix~\ref{appendix:sys_layout} illustrates the system in detail.  
We extend the single-period load data from pandapower by scaling it with the load profile of New York City obtained from the New York Independent System Operator (zone ``N.Y.C.'') for the same day (\cite{nyiso}) as:
\begin{equation} \label{eq:power_load_scal}
    p_{i, t}^c = p_{i}^c \times \frac{\rm{NYC~load~at~time~step~}t}{\rm{maximum~NYC~load~across~all~time~steps}}, 
\end{equation}
where $p_{i}^c$ is the load for bus $i$ from the original dataset. 
The same scaling is applied to compute $q_{i, t}^c$. 
Fig.~\ref{fig:temp_power_daily} shows the load and temperature profile for the studied day and we provide a complete parameter list in Appendix \ref{appendix:para_values}.

The substation voltage and voltage limits $V_{0}^{\text{con}}, V_{i}^{\text{min}}, V_{i}^{\text{max}}$ are set to $1.03$ p.u., $0.9$ p.u. and $1.1$ p.u., respectively. 
We take the temperature-dependent limit corrections from \cite{moein2016} as:
\begin{align*}
    &\Delta P_{i, \text{AT}_t}^{\text{l, max}} = -0.77 \text{AT}_t + 119.45 ,\\
    &\Delta P_{i, \text{AT}_t}^{\text{DG, corr}} = -0.47 \text{AT}_t + 111.60 ,\\
    &\Delta P_{i, \text{AT}_t}^{\text{B,corr}} = -0.016 \left( \text{AT}_t \right)^2 + 1.97 \text{AT}_t + 60.75.
\end{align*}

\begin{figure}
    \centering    
    \includegraphics[width=0.65\textwidth]{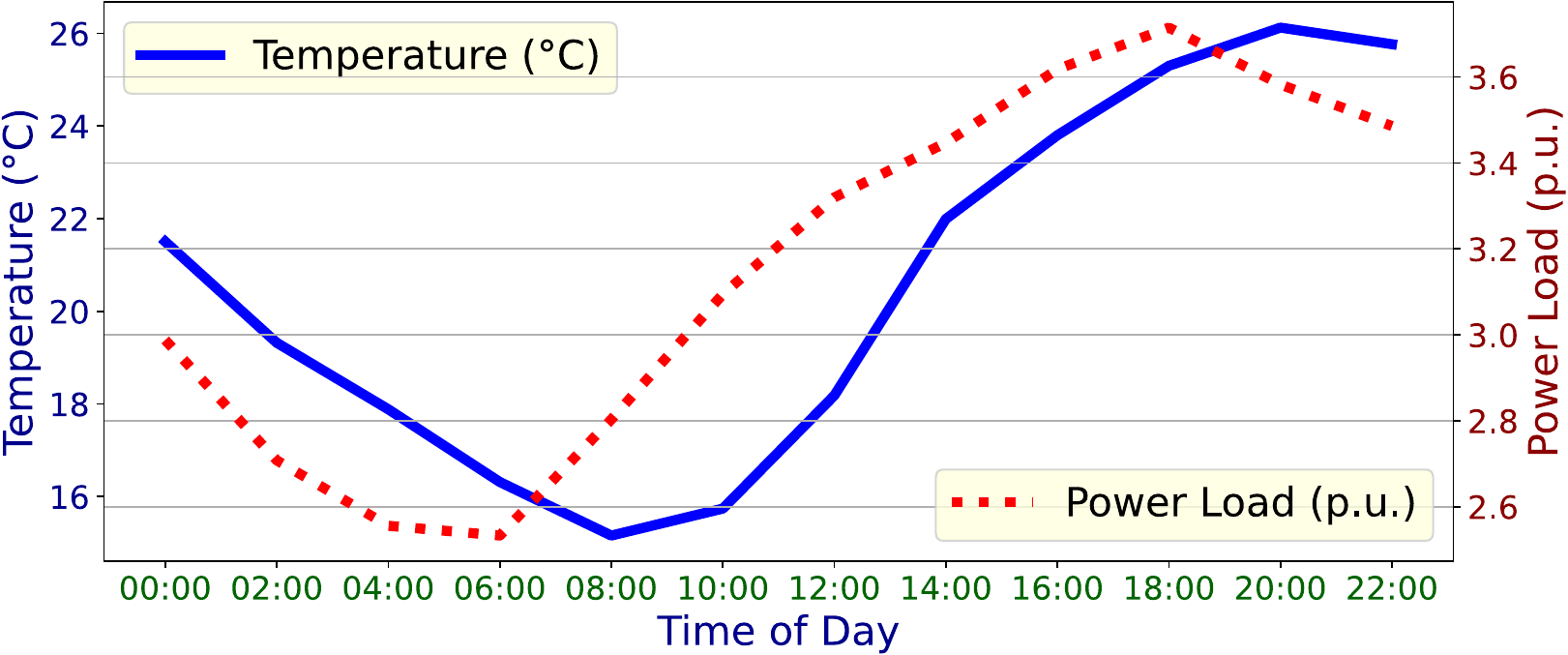}
    \caption{Temperature and load profile of the considered day. Load is total load across all buses: $\sum_{i\in\mathcal{N}}p_{i,t}^{\rm c}\ \forall t$.}
    \label{fig:temp_power_daily}
\end{figure}

\subsection{Regression Model Coefficients Estimation}\label{sec:4_para_est}
This section outlines a blueprint for system operators to estimate the required failure rate models. 
For this case study we use a data-generating simulation that creates database for our regression model. 
While real-world system operators can use real data, they may also benefit from a similar simulation approach if data is scarce.

\subsubsection{Bus and line failure rate}
We simulate the baseline time to failure of all components through an 
exponential distribution with constant failure rate (\cite{richard_epdr_4}).
In particular:

\begin{enumerate}
    \item \textbf{Substation and bus failure rate:}
    For the substation and buses we adopt the failure rate of transformers from \cite{richard_epdr_4} under the following conditions:
    \begin{enumerate}
            \item ambient temperature is $22$ $^{\circ}$C
            \item average winding temperature rise is $65$ $^{\circ}$C
            \item additional hot spot temperature rise is $15$ $^{\circ}$C.
        \end{enumerate}
    The resulting mean time to failure (MTTF) of the each transformer is:
    \begin{equation*}
        \rm{MTTF}_{\rm b} = 10^{\frac{K_1}{\left( 273 + ^{\circ}C  \right)}+K_2} \rm{hours}, 
    \end{equation*}
    where constants $K_1 = 6328.80$ and $K_2 = -11.269$. 
    The resulting time step failure rate (TSFR) can then be calculated as the reciprocal of the MTTF as: (\cite{pham_sre_1}):
    \begin{align*}
        \text{TSFR}_{\rm{b}}= \frac{1}{10^{\frac{K_1}{\left( 273 + ^{\circ}C  \right)}+K_2}} 
        = \frac{1}{2\!\times\! 10^{\frac{6328.80}{\left( 273 + (22+65+15)^{\circ}C  \right)}+(-11.269)}} 
        = 4.93 \times 10^{-6}. 
    \end{align*}
    
    \item \textbf{Line failure rate:} We assume the constant failure rate follows an exponential distribution and adopt the yearly line failure rate as the typical yearly failure rate of overhead lines: $\text{Year}_{\rm{l}} = 0.05$ (\cite{richard_epdr_4, brown2004failure}). Converting into the TSFR:
    \begin{equation*}
        \text{TSFR}_{\rm{l}} = \frac{0.05}{365\text{ days} \times 12\text{ time steps}} = 1.14 \times 10^{-5}.
    \end{equation*}
\end{enumerate}

\subsubsection{Bus and line total failure probability during a single time step}

Because the time scope in the experimental setting is a single day, we do not consider component degradation in this model.
Hence, the failure probability during each time step is independent of the time step for all components.
Without loss of generality, for all components the failure probability during a single time step can be calculated as the probability of failure within the time interval $(0, 1]$. Specifically:
\begin{align}
    &\forall i \in \mathcal{N}: \text{Pr}_{i}^{\text{b}}(1) = F_{i}^{\text{b}}(1) - F_{i}^{\text{b}}(0) = 1 - e^{-\text{TSFR}_{\text{b}}} = 1 - e^{-5.56\times 10^{-6}} = 4.93\times 10^{-6},  \label{eq:case_study_bus_int_unrel}\\
    &\forall i \in \mathcal{N}^{+} \!\!: \text{Pr}_{i}^{\text{l}}(1) = F_{i}^{\text{l}}(1) - F_{i}^{\text{l}}(0) = 1 - e^{-\text{TSFR}_{\text{l}}} = 1 - e^{-1.14\times 10^{-5}} = 1.14\times 10^{-5}. \label{eq:case_study_line_int_unrel}
\end{align}

\subsubsection{Sampling experimental data}
\label{ssec:data_sampling}

Because component failures are relatively rare, i.e., the interval unreliability is small as shown in \eqref{eq:case_study_bus_int_unrel} and \eqref{eq:case_study_line_int_unrel}, a large data sample is necessary to accurately estimate the parameters of the logistic regression model proposed in Section~\ref{sec:3_crem} (\cite{heidelberger1995fast}). 
While such datasets may be available for system operators with multiple years of data records, for our experiment we generate a smaller scale synthetic dataset and employ weighted bootstrapping to effectively enlarge the sample size (\cite{jens1993exchangeably}).
This approach can also be used to expand existing datasets with rare failure events to ensure more accurate parameter estimation.

\paragraph{Ambient temperature data.}
We use $2$-meter temperature data (i.e., air temperature measured at $2$ meters above the surface) provided by Copernicus Climate Change (\cite{copernicus}) for our case study. 
Our analysis is based on data collected for New York City (specifically, between latitudes $40.47^\circ$N to $40.91^\circ$N and longitudes $73.70^\circ$W to $74.25^\circ$W) over a span of $122$ days, covering the period from April $1$, $2024$ to July $31$, $2024$. 
The temperature readings were recorded every $2$ hours, starting from 00:00 and continuing through 22:00 (in total of $12$ time steps) each day.

\paragraph{Power flow and line current magnitude data.}
We use load data from New York City for the same time period (April $1$, $2024$ to July $31$, $2024$), as provided by the New York Independent System Operator (\cite{nyiso}).
By applying the scaling method detailed in \eqref{eq:power_load_scal}, we extend the single-period load data from pandapower to create a dataset of $1464$ data points, corresponding to $122$ days with $12$ time steps per day. 
Running the CM for each of these $1464$ time steps, we generate $1464$ reference data points for net power demand and currents in the system.

\paragraph{Component failure data.}
We generate synthetic failure data for our case study by assigning failure probabilities based on key context variables, reflecting observations made, e.g., in \cite{zhang2019data} and \cite{luca2022} for distribution systems and mirroring generator reliability modeling under adversarial conditions discussed in \cite{miguel2016,mieth2022risk}. 
For buses, the likelihood of failure increases with higher net power demand and elevated ambient temperatures. Similarly, line failures are more probable when the current magnitude and ambient temperature are high. 
(As discussed in Section~\ref{sec:3_crem} above, other context variables can be used in the model.)

Using the 1464 samples of the generated data as described above, for each bus $i$, $i \in \mathcal{N}$ and line $j$, $j \in \mathcal{N}^{+}$, we calculate the mean ($\mu_{\text{b}_i}$ for bus $i$, $\mu_{\text{l}_j}$ for line $j$) and standard deviation ($\sigma_{\text{b}_i}$ for bus $i$, $\sigma_{\text{l}_j}$ for line $j$) of net power demand $\tilde{p}^{\text{b}}$ (for buses) and current square magnitude $l_{i,t}$ (for lines).
Further, we calculate the mean ($\mu_{\text{AT}}$) and standard deviation ($\sigma_{\text{AT}}$) of ambient temperature from the previously generated datasets. 
We then define thresholds for net power demand, current magnitude, and ambient temperature as: $\tau_{\text{b}} = \mu_{\text{b}} + \sigma_{\text{b}}$; $\tau_{\text{l}} = \mu_{\text{l}} + \sigma_{\text{l}}$; $\tau_{\text{AT}} = \mu_{\text{AT}} + \sigma_{\text{AT}}$. 
Assuming that each variable follows a normal distribution, setting the threshold at one standard deviation above the mean corresponds to capturing values that are higher than approximately $84.2\%$ of the distribution.
Next, we count the number of samples, $s_i^{\text{b}}$ for bus $i$ and $s_i^{\text{l}}$ for line $j$, where the net power demand (or current magnitude) exceeds the respective thresholds $\tau_{\text{b}}$ (or $\tau_{\text{l}}$) and the ambient temperature exceeds $\tau_{\text{AT}}$. 
For each bus and line, we designate $80\%$ of these samples as failure cases. Within these failure cases, $90\%$ correspond to instances where both the net power demand (or current magnitude) and ambient temperature are above their respective thresholds, while the remaining $10\%$ correspond to instances where both values are below their thresholds. 

The failure data is represented by binary indicators, where $1$ denotes failure and $0$ denotes normal operations. 
Initially, all failure indicators are set to $0$. 
For each bus and line, starting from the first of the $1464$ data points, we assign a failure indicator of $1$ to the data points where the net power demand exceeds $\tau_{\text{b}}$ (or the current magnitude exceeds $\tau_{\text{l}}$) and the ambient temperature exceeds $\tau_{\text{AT}}$. 
We continue this process until the total number of assigned failures reaches $s_i^{\text{b}} \times 80\% \times 90\%$ for bus $i$ (or $s_i^{\text{l}} \times 80\% \times 90\%$ for lines). 
Subsequently, we restart from the beginning of the $1464$ data points and assign a failure indicator of $1$ to data points where both the net power demand (or current magnitude) and ambient temperature are below their respective thresholds, until $s_i^{\text{b}} \times 80\% \times 10\%$ for bus $i$ (or $s_i^{\text{l}} \times 80\% \times 10\%$ for lines). 
We denote the resulting synthetic dataset, which, to summarize, has a sample size of $1464$, and includes the context variables (covariates) net power demand for buses (current magnitude for lines) and ambient temperature, and response variables indicating failure, as $D_{\text{b}}$ for buses and $D_{\text{l}}$ for lines.

\subsubsection{Generalized logistic regression model parameter estimation}

Traditional logistic regression using MLE is known to be unsuitable for rare events due to its tendency to underestimate event probabilities as shown in \cite{king2001logistic} and  \cite{mccullagh_glm_4}.
To avoid this, we utilize the HMC method, with prior distributions derived from MLE obtained from a logistic regression model, which we apply to the expanded versions of our datasets $D_{\text{b}}$ and $D_{\text{l}}$ to provide a blueprint for real-world applications.
HMC is preferred over traditional Markov Chain Monte Carlo (MCMC) methods for several reasons. Although MCMC is theoretically guaranteed to converge to the target distribution, determining a sharp bound on the convergence rate is often difficult, and terminating the chain after a finite time can lead to errors in approximating the stationary distribution, as discussed in \cite{guanyang2022exact, michael2018conceptual}. HMC accelerates the convergence rate by exploiting the geometry of the target distribution through the use of gradients. By introducing auxiliary momentum variables, HMC ensures that exploration is guided more efficiently toward the typical set (\cite{yuan2024markov, robert2018accelerating}). Furthermore, HMC is more robust when dealing with target distributions that exhibit pathological behavior, which is especially useful in our case since the prior guess may not be perfect (\cite{michael2018conceptual}).
The parameter estimation procedure is consistent for all buses and lines in the network. 
We illustrate the procedure in detail using the bus indexed as $i=1$ as an example.

\paragraph{Weighted bootstrapping and logistic regression model fitting using MLE.}
Given the rare occurrence of bus failure events, the sample size in $D_{\text{b}}$ is insufficient for a reasonable parameter estimation using the logistic regression model with MLE. 
To address this, we bootstrap $10^{7}$ samples from $D_{\text{b}}$. 
Specifically, we use weighted bootstrapping to ensure that the number of failures in the expanded sample is consistent with the failures rates from \eqref{eq:case_study_bus_int_unrel}, \eqref{eq:case_study_line_int_unrel}.
The sampling weight $w_{i}$ for each sample $i$ in $D_{\text{b}}$ is calculated as follows:
\begin{equation}
    w_i = 
        \begin{cases} 
            \text{Pr}_{1}^{\text{b}}/\frac{s_1^{\text{b}}}{1464} & \text{if sample \( i \) is a failure sample,} \\ 
            \big(1-\text{Pr}_{1}^{\text{b}}\big)/\frac{1464 - s_1^{\text{b}}}{1464} & \text{if sample \( i \) is a non-failure sample.} 
        \end{cases}
\label{eq:sampling_weight_defintion}
\end{equation}
After bootstrapping, with a total of $10^{7}$ samples, we fit the model using the \texttt{LogisticRegression} class from the \texttt{scikit-learn} Python library (\cite{scikitlearn}). We denote the estimated parameters as $\hat{\beta}_{0, 1}^{\text{b,prior}}$, $\hat{\beta}_{1, 1}^{\text{b,prior}}$, $\hat{\beta}_{2, 1}^{\text{b,prior}}$ and $\hat{\lambda}_{\text{b}, 1}^{\text{prior}} = e^{-\hat{\beta}_{0, 1}^{\text{b,prior}}}$. 

\paragraph{HMC.} \label{sec:para_hmc}
To estimate the parameters of our logistic regression model, we apply HMC using the \texttt{TensorFlow Probability} library (\cite{tensorflow2015whitepaper}). We model the prior as a multivariate normal distribution with mean $[\hat{\beta}_{0, 1}^{\text{b,prior}}, \hat{\beta}_{1, 1}^{\text{b,prior}}, \hat{\beta}_{2, 1}^{\text{b,prior}}]$ and a standard deviation of 10 for each parameter, reflecting our initial uncertainty. 
According to Bayes' theorem~(\cite{hoff_afcbsm_10}):
\begin{equation*}
    \text{Posterior}(\boldsymbol{\beta} \mid \text{data}) \propto \text{Weighted Likelihood}(\text{data} \mid \boldsymbol{\beta}) \times \text{Prior}(\boldsymbol{\beta}). 
\end{equation*}
Explicitly, the posterior is:
\begin{equation*}
    \text{Pr}(\boldsymbol{\beta} \! \mid \! \text{data}) \! \propto \!\left[\prod_{i=1}^{n_{\text{mc}}} \left(\text{pr}_i^{y_i} (1 - \text{pr}_i)^{1 - y_i}\right)^{w_i} \! \right] \! \times \prod_{j=0}^2 \frac{1}{\sqrt{2\pi\sigma_j^2}} \exp \! \left( \!\! -\frac{(\beta_{j, 1} - \mu_j)^2}{2\sigma_j^2} \! \right),  
\end{equation*}
The log posterior is:
\begin{equation*}
    \log \text{Pr}(\boldsymbol{\beta} \! \mid \! \text{data}) \! \propto \! \sum_{i=1}^{n_{\text{mc}}} w_i \left[ y_i \log(\text{pr}_i) + (1 - y_i) \log(1 - \text{pr}_i) \right] - \sum_{j=0}^2 \left(\frac{(\beta_{j, 1} - \mu_j)^2}{2\sigma_j^2}\right), 
\end{equation*}
where
\begin{enumerate}
    \item[-] $\boldsymbol{\beta} = (\beta_{0, 1}^{\text{b}}, \beta_{1, 1}^{\text{b}}, \beta_{2, 1}^{\text{b}})$.
    \item[-] $y_i$ is the binary value indicates the observed outcome (1 for failure, 0 for non-failure).
    \item[-] $\text{pr}_i = \frac{1}{1 + \exp(-\bm{x}_i^\top \boldsymbol{\beta})}$ is the predicted probability for the $i^{th}$ observation, $\bm{x} = (x_0, x_1, x_2)^{T}$ is the covariates matrix with  $x_0 = 1$, $x_1$ corresponding to power flow and $x_2$ corresponding to ambient temperature.
    \item[-] $w_i$ is the sampling weight applied to the $i^{th}$ observation as per \eqref{eq:sampling_weight_defintion}
    \item[-] $\mu_j$ and $\sigma_j^2$ are the mean and variance of the prior distribution for~$\beta_{j, 1}$.
\end{enumerate}
The HMC algorithm treats the parameter vector $\boldsymbol{\beta}$ as particles in a physical system, where the negative log-posterior serves as the potential energy. 
An auxiliary momentum variable $\bm{p}$ is introduced, typically drawn from a Gaussian distribution $\mathcal{N}(0, M)$, where $M$ is the mass matrix, which is set to the identity matrix in our implementation. The Hamiltonian function $H(\boldsymbol{\beta}, \bm{p})$ combines potential and kinetic energy and is defined to be (\cite{gelman_hmcmc_5, martin_bmacip_11}):
\begin{equation*}
    H(\boldsymbol{\beta}, \bm{p}) = -\log \text{Pr}(\boldsymbol{\beta} \mid \text{data}) + \frac{1}{2} \bm{p}^\top M^{-1} \bm{p}. 
\end{equation*}
HMC uses leapfrog integration to simulate the trajectories of $\boldsymbol{\beta}$ and $\mathbf{p}$ over time (\cite{gelman_hmcmc_5, martin_bmacip_11}). A Metropolis-Hastings correction step ensures that the proposed new state is accepted with the correct probability (\cite{gelman_hmcmc_5, martin_bmacip_11}):
\begin{equation*}
    \text{Accept}(\boldsymbol{\beta}^*) = \min\left(1, \exp\left(H(\boldsymbol{\beta}, \bm{p}) - H(\boldsymbol{\beta}^*, \bm{p}^*)\right)\right). 
\end{equation*}
We apply the Dual Averaging Step Size Adaptation to dynamically adjust the step size for the leapfrog (\cite{tensorflow2015whitepaper}). This adaptation aims for an acceptance rate of 0.65, and values between 0.41 and 0.65 are considered acceptable for stable exploration of the parameter space (\cite{gelman_hmcmc_5, martin_bmacip_11}). The HMC sampling is conducted for $10^5$ iterations, including a burn-in period of $2\times 10^4$ steps. The final estimates of the parameters: $\hat{\beta}_{0, 1}^{\text{b}}$, $\hat{\beta}_{1, 1}^{\text{b}}$ and $\hat{\beta}_{2, 1}^{\text{b}}$ are taken as the mean values from the posterior samples.

\subsection{CRM  Regularization}\label{sec:case_study_ite_alg}
As detailed in Section~\ref{sec:3_comp_alg} above, the iterative optimization process of CRM incorporates predefined convergence and regulation parameters.
For our case study we set the convergence thresholds as $\epsilon_1=10^{-3}$, $\epsilon_2=10^{-1}$, and $\epsilon_3=2 \times 10^{-5}$. The maximum number of iterations is $k^{\text{max}} = 100$. 
Finally, we define the regularization term as:
\begin{equation*}
    \frac{10^{5}\times \left(\left(\mathbf{v}[k]-\mathbf{v}[k-1]\right)^{T}\left(\mathbf{v}[k]-\mathbf{v}[k-1]\right)\right)}{0.85^{k+5}}, 
\end{equation*}
where $k$ indicates the iteration number.

\subsection{Results and discussion} \label{sec:case_study_result_discussion}

\paragraph{Cost.}
Fig.~\ref{fig:obj_crem_cremappx_vs_ite} illustrates the objective values for both CRM-APPX and CRM at each iteration. We show detailed values and computational details in Appendix~\ref{appendix:obj_values}. 
The algorithm required 48 iterations to converge and was terminated by the relative objective improvement ($\epsilon_3$).
At iteration $0$, both models display the result of the initial CM, i.e., resources are dispatched only with regard to operational cost.  
As expected, the initial CM shows lower cost, as it does not account for expected cost from failures (point (i) in Fig.~\ref{fig:obj_crem_cremappx_vs_ite}). 
Point (ii) in Fig.~\ref{fig:obj_crem_cremappx_vs_ite}, i.e., the first iteration of the algorithm, then shows the resulting total cost, i.e., including expected cost of failure as per CRM objective \eqref{eq:CRM_objective}, of the previous reliability-myopic DER control setpoints defined by CM in the initial iteration of the algorithm. 
As iterations progress, CRM co-optimizes the cost of operation and expected failure cost, resulting in point (iii) in Fig.~\ref{fig:obj_crem_cremappx_vs_ite} and achieving a \unit[11.7]{\%} reduction of expected cost of unserved energy.
Operational cost similarly increase by \unit[12.6]{\%}, however at an absolute scale that is two orders of magnitude lower.

\begin{figure}[b]
    \centering
    \begin{subfigure}[b]{0.495\textwidth}
        \centering
        \includegraphics[width=\textwidth]{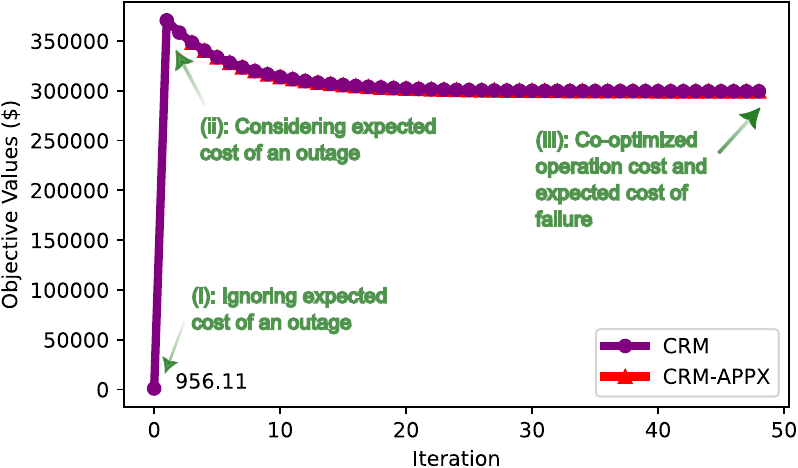}
        \caption{CRM \& CRM-APPX objective vs Iteration}
        \label{fig:obj_crem_and_approx}
    \end{subfigure}
    \hfill
    \begin{subfigure}[b]{0.495\textwidth}
        \centering
        \includegraphics[width=\textwidth]{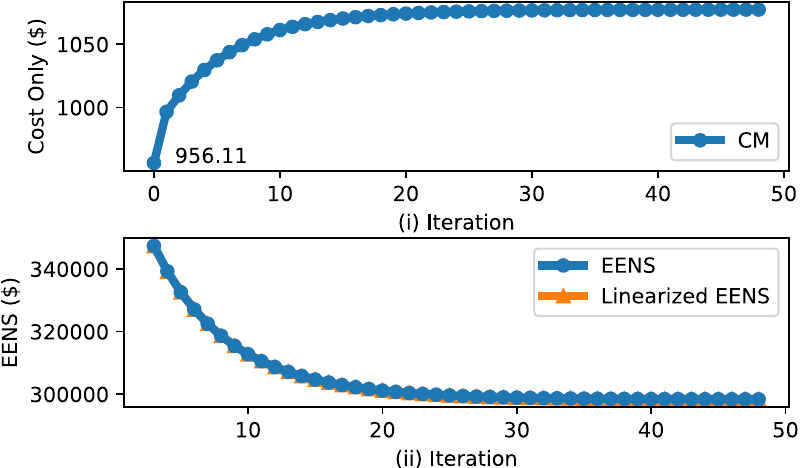}
        \caption{(i) $\sum_{t\in\mathcal{T}} C_t^{\text{op}}$\!\!  \& (ii) $\sum_{t\in\mathcal{T}} C_t^{\text{eens}}$\!\!  vs Iteration}
        \label{fig:obj_op_and_eens}
    \end{subfigure}
    \caption{Objective values for CRM and CRM-APPX in each iteration}
    \label{fig:obj_crem_cremappx_vs_ite}
\end{figure}

\paragraph{DER dispatch.} 
Fig.~\ref{fig:pf_cm_crem} shows the resulting active power dispatch of all DERs for the CM and CRM.
Figs.~\ref{fig:crem_voltage} to \ref{fig:crem-cm_current} illustrate the resulting voltages and currents in the system and highlight the respective differences when applying CRM.
Additionally, Fig.~\ref{fig:v_cs} illustrates the resulting voltages and currents for the CRM and the differences between the CM and CRM results in a spatial resolution for the 18:00, i.e., the time step with the highest temperature.

\begin{figure}
    \centering
    \begin{subfigure}[b]{0.495\textwidth}
        \centering
        \includegraphics[width=\textwidth]{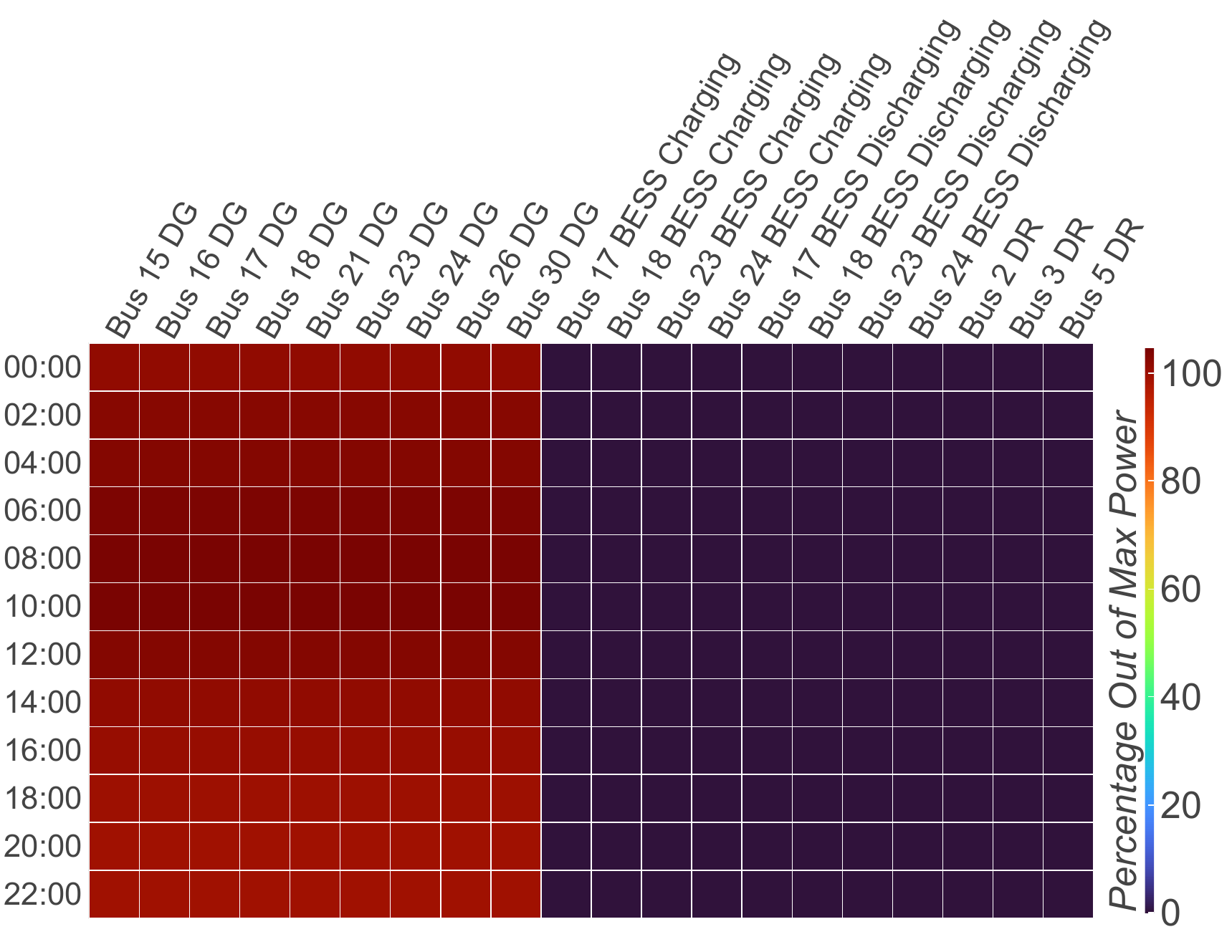}
        \caption{CM}
        \label{fig:pf_cm}
    \end{subfigure}
    \hfill
    \begin{subfigure}[b]{0.495\textwidth}
        \centering
        \includegraphics[width=\textwidth]{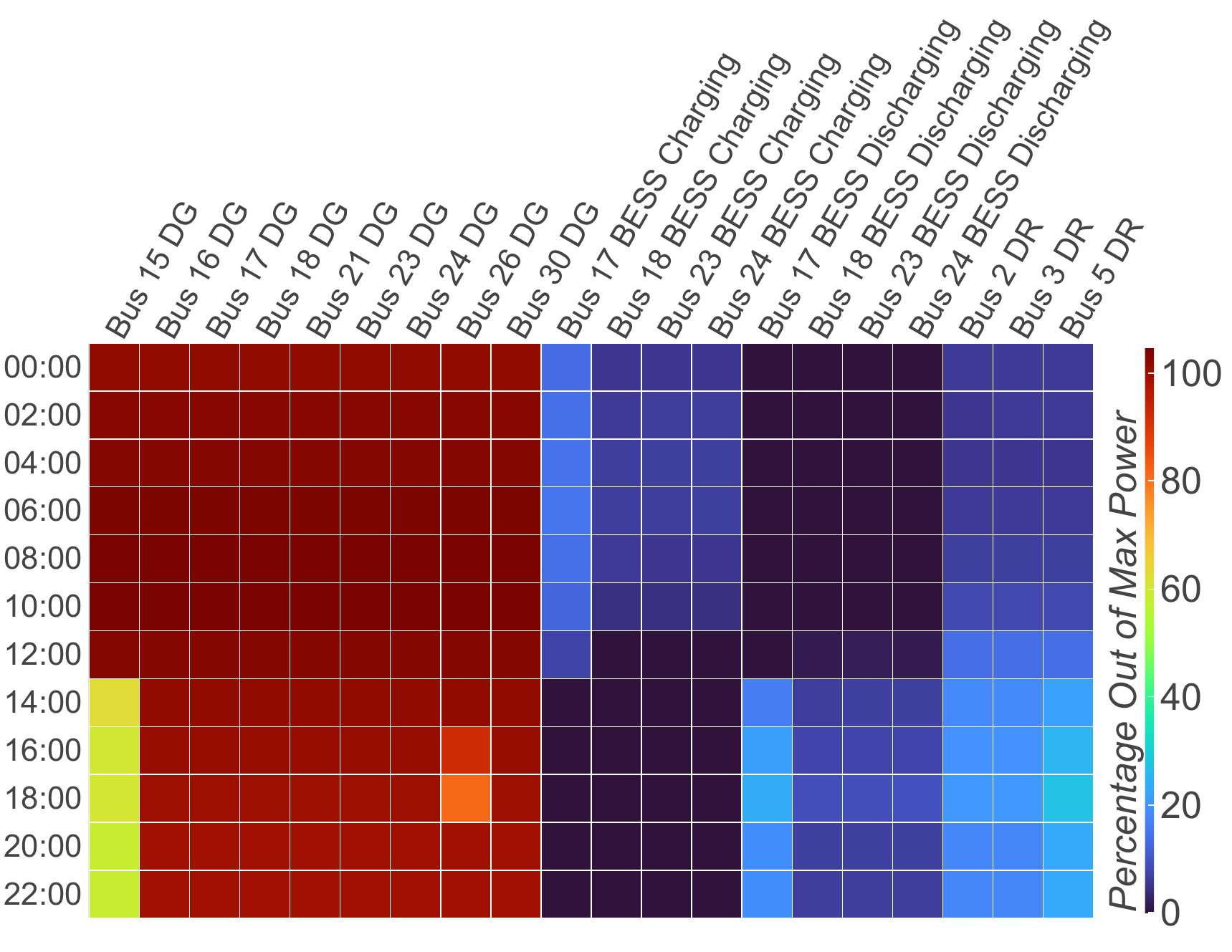}
        \caption{CRM}
        \label{fig:pf_crem}
    \end{subfigure}
    \caption{DER active power dispatch in percent utilization of maximum power limit.}
    \label{fig:pf_cm_crem}
\end{figure}

We observe from the dispatch results in Fig.~\ref{fig:pf_cm} that in the reliability-myopic CM only energy supplied from the substation and cheap local DG generation is required. 
The CM does not consider battery discharging (which would require prior charging) or the relatively costly utilization of DR, because the system is able to ensure that demand is met and all technical constraints, as defined in \eqref{eq:const_line_1}--\eqref{eq:const_battery_d_lim_real}, are satisfied. 
Fig.~\ref{fig:crem_voltage}, for example, shows that all voltage magnitudes remain within the technical limits of \unit[0.9]{p.u.} to \unit[1.1]{p.u.} (for squared voltage magnitudes).
When switching to CRM, the model now aims to find DER control setpoints that decrease the expected cost of EENS by optimally decreasing the power- and current-dependent failure rate of the system components. 
To this end, more DER is utilized as shown in Fig.~\ref{fig:pf_crem}.
We observe that CRM decides to charge batteries at lower load hours and discharge them at higher load hours (see also Fig.~\ref{fig:temp_power_daily} for the studied load profile) to reduce net bus net power demand.
For the same reason, and in contrast to CM, CRM utilizes DR during high load hours.
As a result, voltage levels increase at the relevant buses and time steps as shown in Fig.\ref{fig:crem-cm_voltage}. 

Net power demand reduction at buses with DERs is particularly impactful on line currents and their resulting failure rate. Fig.~\ref{fig:crem-cm_current} shows that during high load and high temperature hours (time steps $16$ to $22$), currents are reduced. During lower load hours Fig.~\ref{fig:crem-cm_current} shows higher currents to accommodate the higher load from battery charging as per Fig.~\ref{fig:pf_crem}. The impact of using CRM on line currents is reduced with increasing distance from the substation because of the radial topology of the network.

\begin{figure}[b]
    \centering    
    \includegraphics[width=1.0\linewidth]{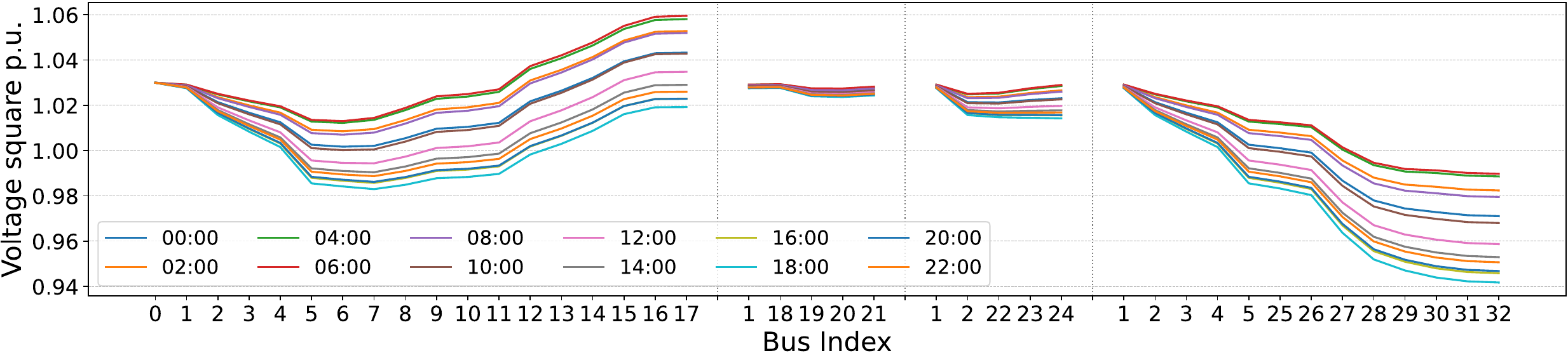}
    \caption{Voltage square ($v_{i,t}$ in p.u.) for CM.}
    \label{fig:crem_voltage}
\end{figure}

\begin{figure}
    \centering    
    \includegraphics[width=1.0\textwidth]{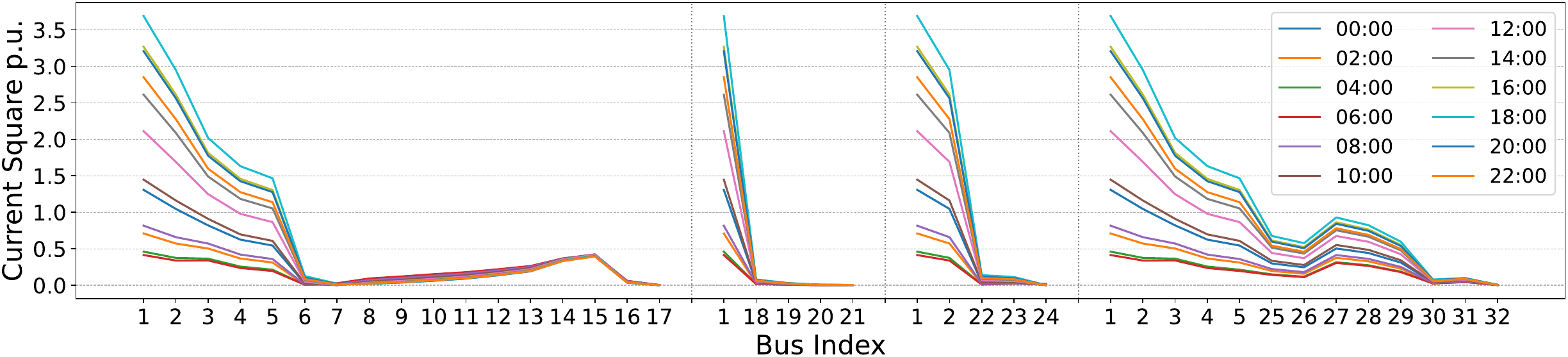}
    \caption{Current square ($l_{i,t}$ in p.u.) for CM}
    \label{fig:crem_current}
\end{figure}

\begin{figure}
    \centering    
    \includegraphics[width=1.0\textwidth]{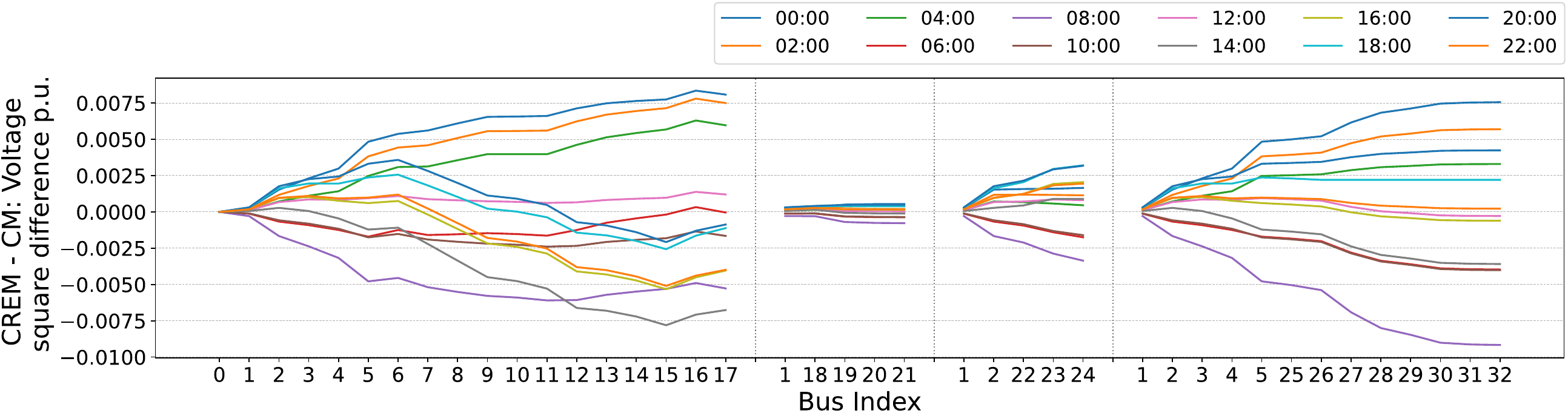}
    \caption{Voltage square difference ($v_{i,t}$ in p.u.) between CRM and CM. Positive values indicate an increase in CRM compared to CM.}
    \label{fig:crem-cm_voltage}
\end{figure}

\begin{figure}
    \centering    
    \includegraphics[width=1.0\textwidth]{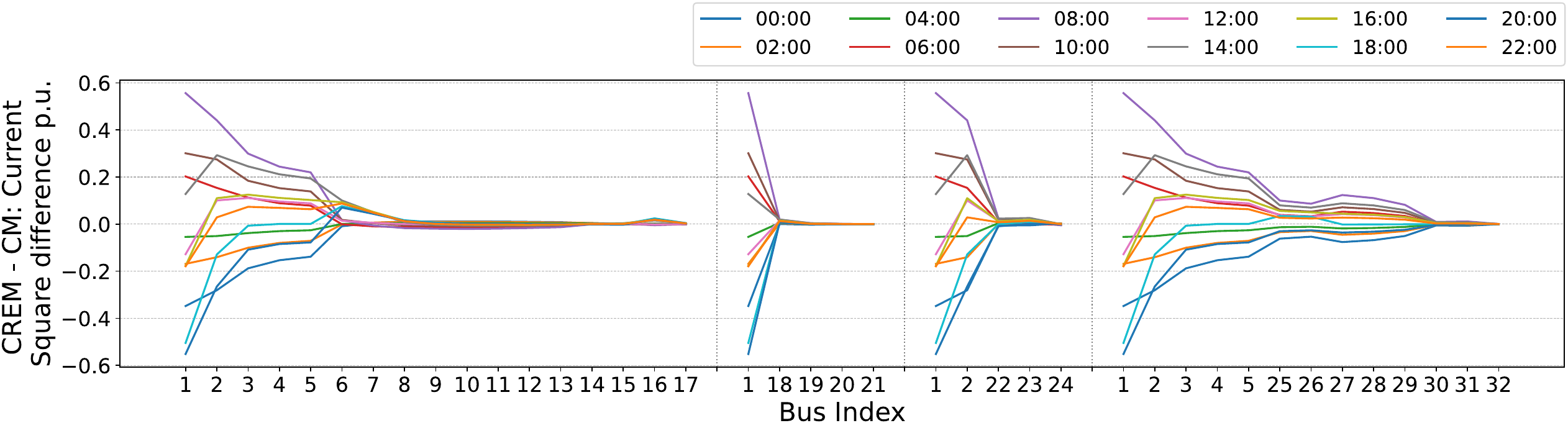}
    \caption{Current square difference ($l_{i,t}$ in p.u.) between CRM and CM. Positive values indicate an increase in CRM compared to CM.}
    \label{fig:crem-cm_current}
\end{figure}

\begin{figure}
    \centering
    \begin{subfigure}[b]{0.495\textwidth}
        \centering
        \includegraphics[width=\textwidth]{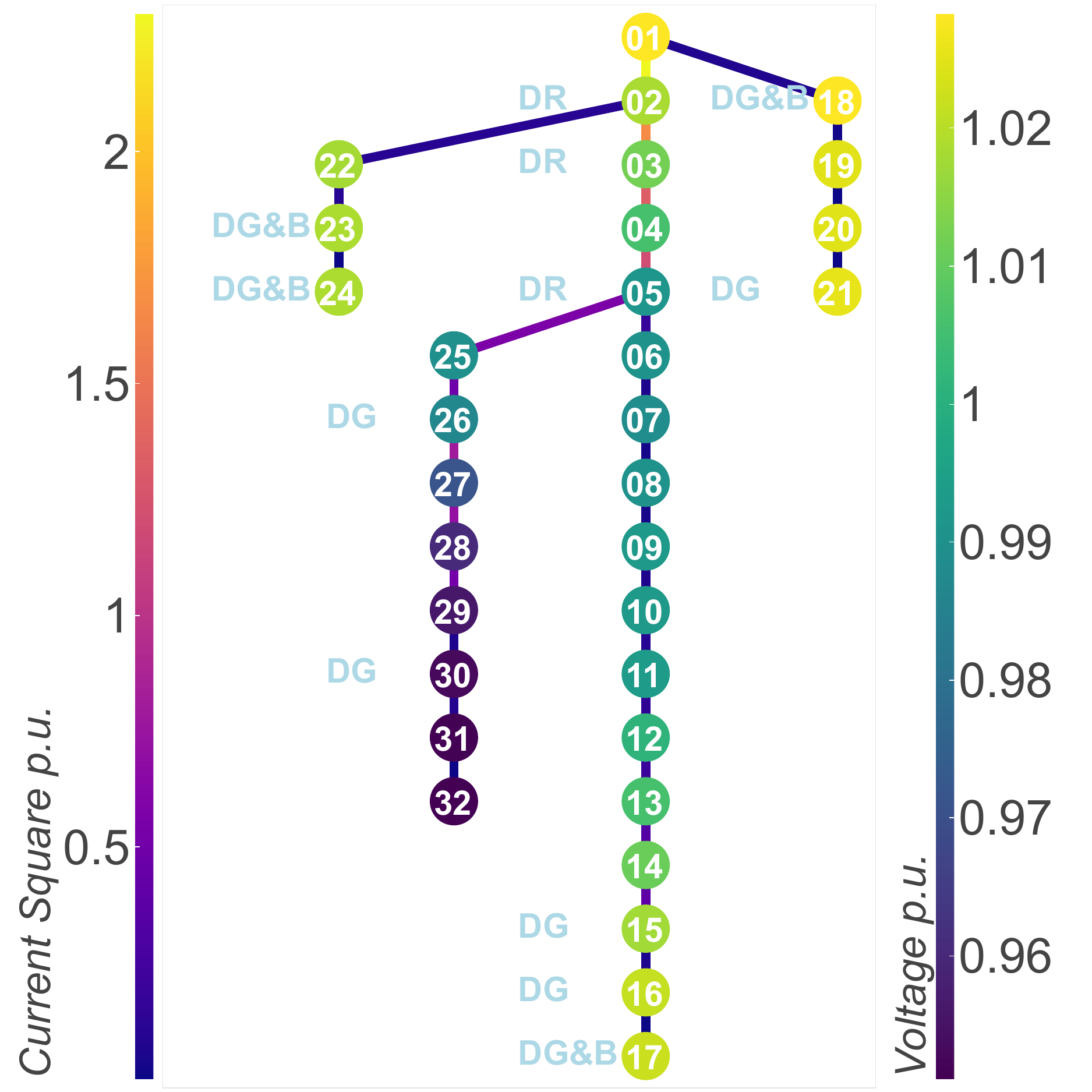}
        \caption{CRM voltage and current}
        \label{fig:v_cs_crem}
    \end{subfigure}
    \hfill
    \begin{subfigure}[b]{0.495\textwidth}
        \centering
        \includegraphics[width=\textwidth]{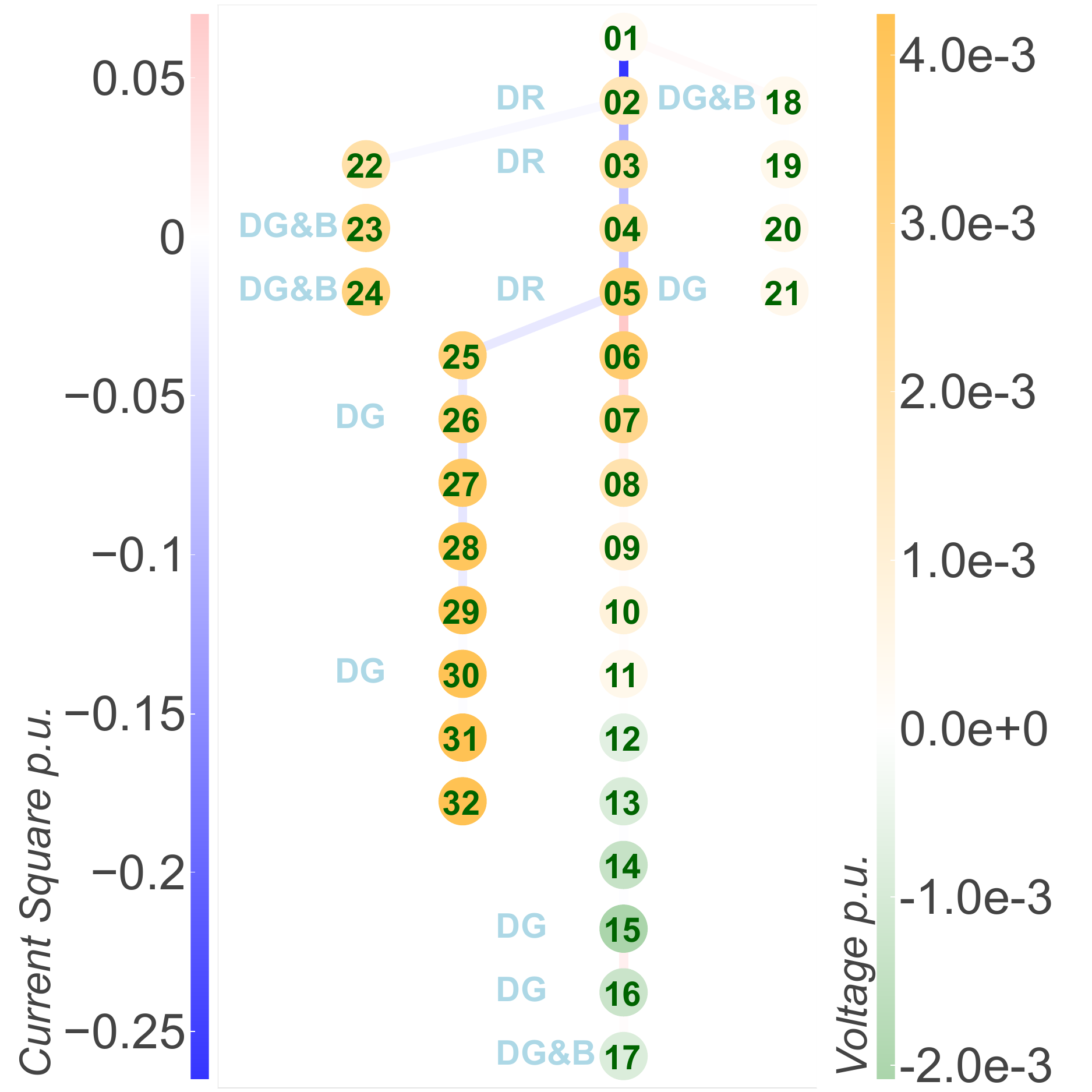}
        \caption{Voltage and current difference CRM-CM}
        \label{fig:v_cs_crem_cm}
    \end{subfigure}
    \caption{Spatial illustration of voltage and current results for CRM (absolute in (a) and relative to CM in (b)).}
    \label{fig:v_cs}
\end{figure}

\paragraph{Failure rates.}

Figs.~\ref{fig:bus_line_fail_prob}, \ref{fig:bus_line_fail_prob_600}, and \ref{fig:bus_line_fail_prob_1800} itemize the resulting change in line and bus failure rates that are achieved by CRM. 
Fig.~\ref{fig:bus_line_fail_prob} shows the average resulting failure rate change across all time steps for the studied day. While most failure probabilities are reduced on average, some buses and lines show a slight failure probability increase. 
This observed increase is an acceptable trade-off made by the CRM as to reduce the overall expected cost of EENS.
From Fig.~\ref{fig:bus_line_fail_prob}, we see that this trade-off made spatially by the CRM, i.e., across the buses, and also across time as the inspection of Figs.~\ref{fig:bus_line_fail_prob_600} and \ref{fig:bus_line_fail_prob_1800} reveals.
At 06:00 (Fig.~\ref{fig:bus_line_fail_prob_600}) the CRM accepts a higher failure probability for some lines to accommodate BESS charging as discussed above.
As a result, it can achieve an over-proportionally reduced failure probability for lines 1 and 2 at 18:00 (Fig.~\ref{fig:bus_line_fail_prob_1800}) enabled by BESS discharging and DR activation. 
Additionally, due to the radial topology of the grid, the reliability improvements at lines close to the substation improve the reliability of the entire system (see Section~\ref{sec:3_crem} above). 

\begin{figure}
    \centering
    \begin{subfigure}[b]{0.495\textwidth}
        \centering
        \includegraphics[width=\textwidth]{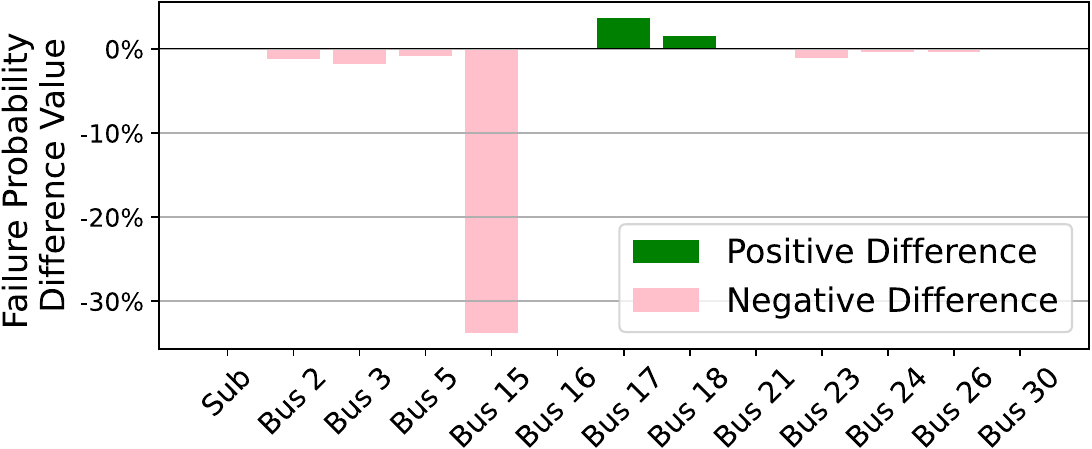}
        \caption{Relative change in line failure probability}
        \label{fig:bus_fail_prob_diff}
    \end{subfigure}
    \hfill
    \begin{subfigure}[b]{0.495\textwidth}
        \centering
        \includegraphics[width=\textwidth]{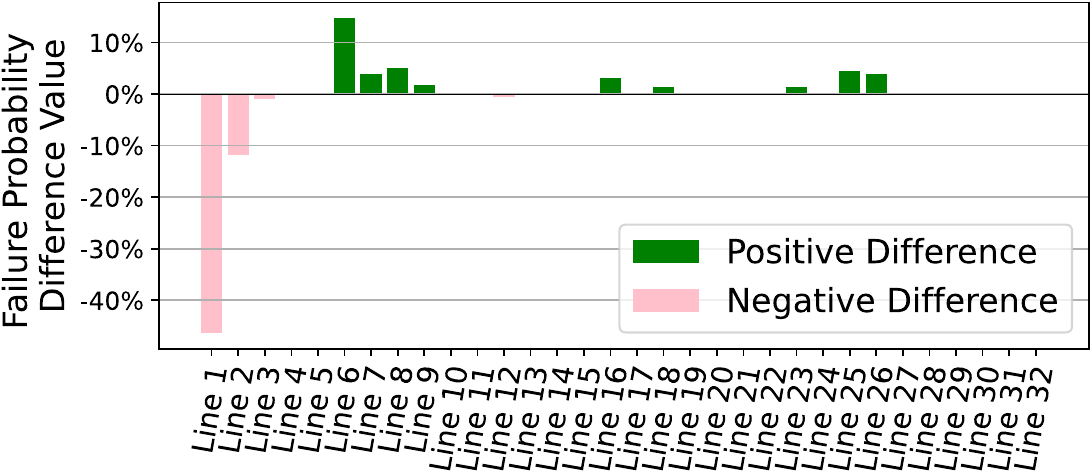}
        \caption{Relative change in line failure probability}
        \label{fig:line_fail_prob_diff}
    \end{subfigure}
    \caption{Relative change in bus and line failure probability (CRM relative to CM in percent) on average for all time steps.}
    \label{fig:bus_line_fail_prob}
\end{figure}

\begin{figure}
    \centering
    \begin{subfigure}[b]{0.495\textwidth}
        \centering
        \includegraphics[width=\textwidth]{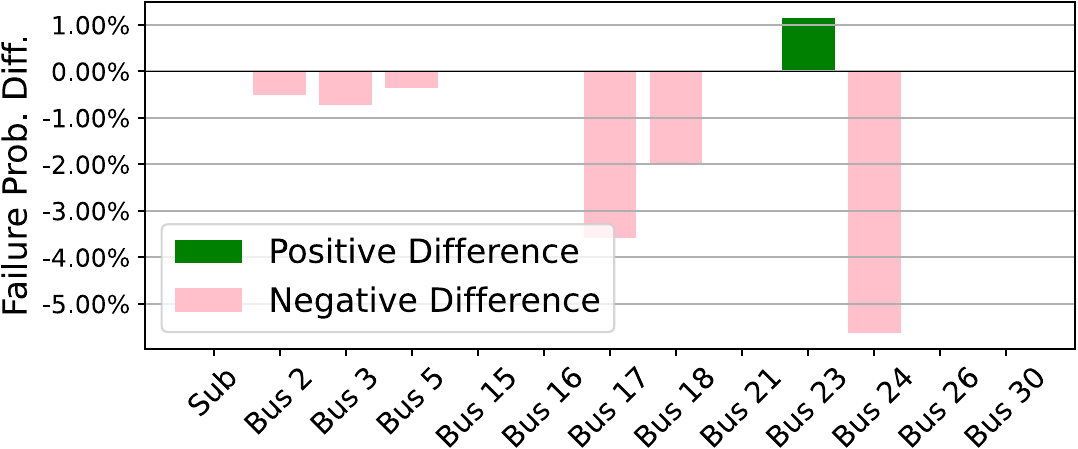}
        \caption{Relative change in bus failure probability}
        \label{fig:bus_fail_prob_diff_600}
    \end{subfigure}
    \hfill
    \begin{subfigure}[b]{0.495\textwidth}
        \centering
        \includegraphics[width=\textwidth]{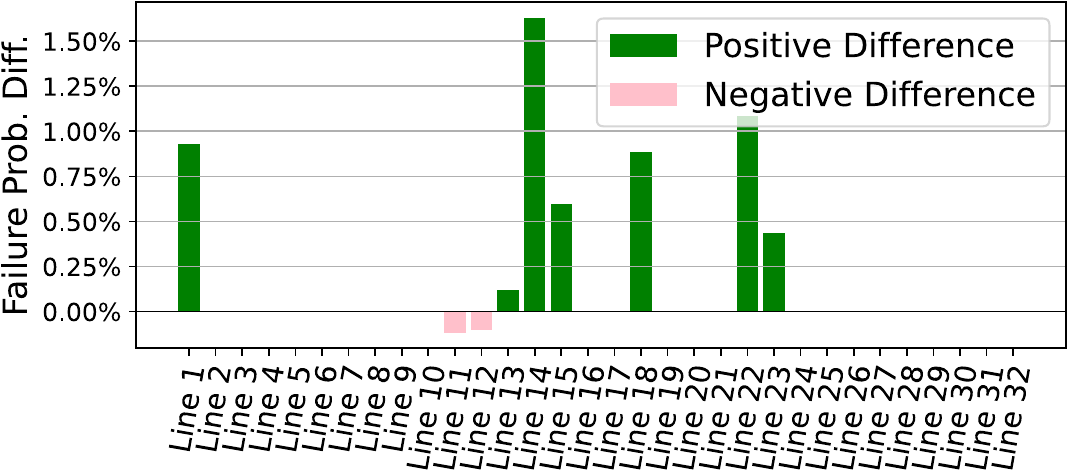}
        \caption{Relative change in line failure probability}
        \label{fig:line_fail_prob_diff_600}
    \end{subfigure}
    \caption{Relative change in bus and line failure probability (CRM relative to CM in percent) at time 06:00.}
    \label{fig:bus_line_fail_prob_600}
\end{figure}

\begin{figure}
    \centering
    \begin{subfigure}[b]{0.495\textwidth}
        \centering
        \includegraphics[width=\textwidth]{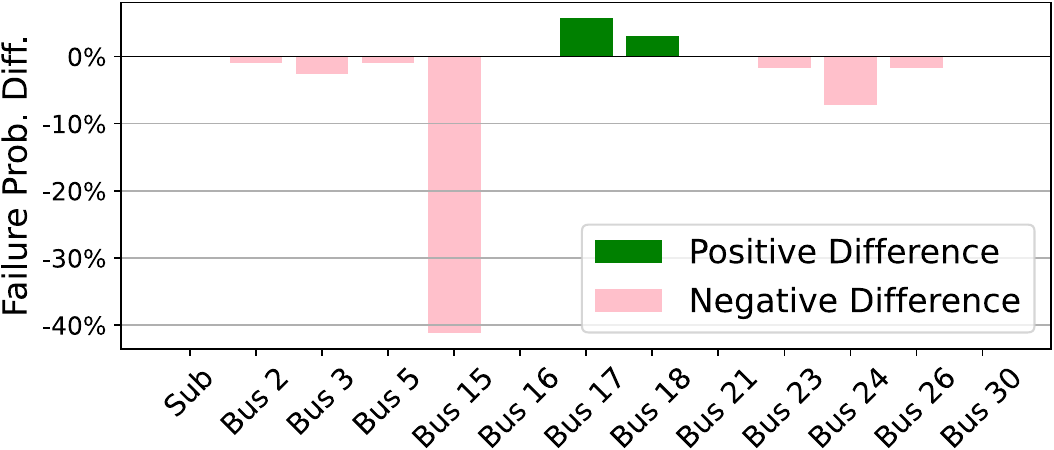}
        \caption{Relative change in bus failure probability}
        \label{fig:bus_fail_prob_diff_1800}
    \end{subfigure}
    \hfill
    \begin{subfigure}[b]{0.495\textwidth}
        \centering
        \includegraphics[width=\textwidth]{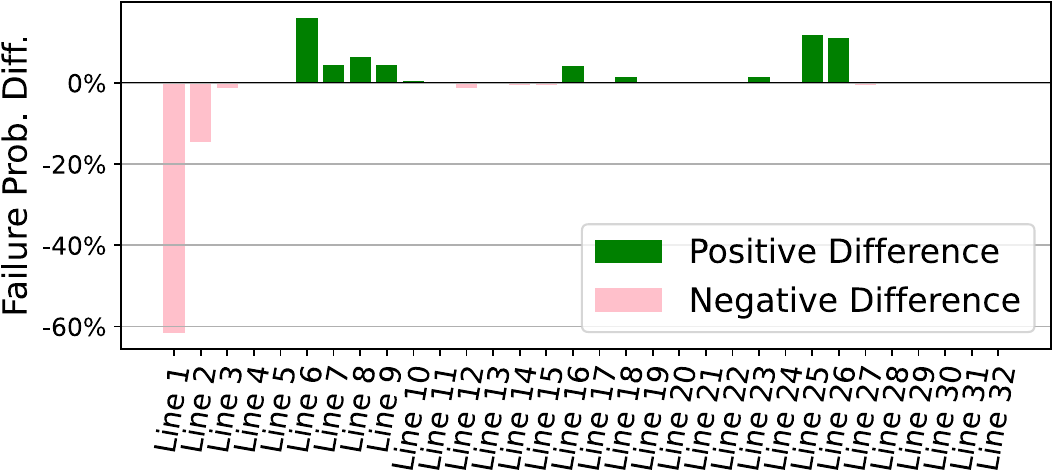}
        \caption{Relative change in line failure probability}
        \label{fig:line_fail_prob_diff_1800}
    \end{subfigure}
    \caption{Relative change in bus and line failure probability (CRM relative to CM in percent) at time 18:00.}
    \label{fig:bus_line_fail_prob_1800}
\end{figure}

\section{Conclusion}
\label{sec:conclusion}
Motivated by the potential of distributed energy resources (DERs) to contribute to distribution system reliability, this paper proposed methods to model and optimize decision- and context-dependent reliability in active distribution systems. 
We first established a reliability-myopic cost-only model (CM) of an active distribution system with controllable DERs as a baseline.
We then proposed a reliability-aware modification of CM, the cost-and-reliability model (CRM), that internalizes the cost of expected energy not served (EENS). We modeled EENS via a decision- and context-aware failure rate model using a logistic function model. This model reflected the dependency of component reliability model decisions (i.e., current and bus net power consumption as a result of DER control setpoints) and context (e.g., ambient temperature).
The resulting optimization problem was non-convex and we proposed an effective iterative approach to solve it. 
Additionally, we extensively discussed options to accurately estimate the parameters of the proposed reliability model. 
Building on a synthetic test dataset, we demonstrated and discussed our model and solution algorithm on a modified IEEE 33-bus test system. 
We observed that CRM efficiently manages power flows, utilizing available battery storage systems (BESSs) to store energy during low-load periods and discharge during high-load periods, and additional activating DR resources in response to increased demand. 

There are several future research directions that arise from the work presented in this paper. 
Our model currently does not consider tie line switches (TS) for fault isolation and service restoration. Introducing TS  will complicate the model of failure events due to the resulting option of optimal switching actions. 
We will extend our model to include TS operations alongside and extensive analysis of the scalability of the proposed approach.
Promising avenues for further improving model solution speed are (i) considering explicit parametric formulations of the linearized reliability functions and (ii) exploring options for machine-learning supported approximations of the reliability function.
Further work is also required on performance guarantees for the proposed sequential convex programming approach.

\newpage
\appendix
\section{Logistic Regression Model and Cox' PH Model Comparison for Discrete Time Interval}\label{appendix:log_reg_cox}

We compare the proposed logistic regression model and Cox proportional hazard (PH) model by obtaining reformulations of both models and show their equivalence for our studied problem.
Throughout the Appendix \ref{appendix:log_reg_cox}, we use $t^{\text{c}}$ to denote the continuous time index. The following analysis is a modified version of the work presented in \cite{abbott1985logistic}.

\paragraph{Proportional Hazard (PH) model.} 
The \textit{hazard rate function}, or \textit{failure rate function}, is defined as the limit of the failure rate as $\Delta t^{\text{c}}$ approaches $0$ (\cite{pham_sre_1}):
\begin{equation*}
    \lambda(t^{\text{c}}) = \lim_{\Delta t^{\text{c}} \rightarrow 0} \frac{Re(t^{\text{c}}) - Re(t^{\text{c}} + \Delta t^{\text{c}})}{\Delta t^{\text{c}} Re(t^{\text{c}})}.
\end{equation*}
In the PH model, the hazard rate is defined as (\cite{abbott1985logistic, cox1972}):
\begin{equation} \label{eq:appendix_cox_hazard}
    \lambda(t^{c}) = \lambda_0(t^{\text{c}}) \exp{\left( \sum \beta_k x_k \right)},
\end{equation}
where $\lambda_0(t^{\text{c}})$ is the baseline failure rate, variables $X_k$ are covariates and $\beta_k$'s are the coefficients of the $X_k$. 
The relationship between the hazard rate and reliability function is given by (\cite{abbott1985logistic, pham_sre_1}):
\begin{equation*}
    \lambda(t^{\text{c}}) = \frac{d \log\left[ Re(t^{\text{c}}) \right]}{dt^{\text{c}}}
\end{equation*}
which can also be written as (\cite{pham_sre_1}):
\begin{equation} \label{eq:appendix_def_rel}
    Re(t^{\text{c}}) = \exp{\left( -\int_{0}^{t^{\text{c}}} \lambda(z) \right)} dz
\end{equation}
Note that conditional probability of the event not happening during the time interval $(t^{\text{c}}, \Delta t^{\text{c}})$ given that the event has not happened before time $t^{\text{c}}$ is given as $Re(t^{\text{c}} + \Delta t^{\text{c}})/Re(t^{\text{c}})$. 

Define:
\begin{equation*}
    p_{t_i} = \text{Pr}\left( Y_{t_i} = 1 | Y_{t_j} = 0, t_j < t_i \right),
\end{equation*}
where $i$, $j$ are some non-negative integers. The relationship between the continuous time $t^{\text{c}}$ and the discrete time step $t$ is given by $t_i = \left(t_{i - \! 1}^{\text{c}}, t_{i - \! 1}^{\text{c}} \!\! + \! \delta  \right]$, where $=t_{i - \! 1}^{\text{c}} \in t^{\text{c}}$ and $\delta$ denotes the length of the time step (\cite{abbott1985logistic}). 

From the definition of conditional probability and \eqref{eq:appendix_def_rel}:
\begin{align} \label{eq:appendix_cox_log_conditional}
    &\text{Pr}\left( Y_{t_i} = 0 | Y_{t_j} = 0, t_j < t_i \right) = \frac{Re(t_{i - \! 1}^{\text{c}} + \delta)}{Re(t_{i - \! 1}^{\text{c}})} = \frac{Re(t_i)}{Re(t_{i - \! 1})} \notag \\
    =&\frac{\exp{\left[ - \int_0^{t_{i - \! 1}^{\text{c}} + \delta} \lambda(z) dz\right]}}{\exp{\left[ - \int_0^{t_{i - \! 1}^{\text{c}}} \lambda(z) dz\right]}} = \exp{\left[ - \int_{t_{i - \! 1}^{\text{c}}}^{t_{i - \! 1}^{\text{c}} + \delta} \lambda(z) dz\right]} \notag \\
    =& \exp{\left[ -\int_{t_{i - \! 1}^{\text{c}}}^{t_{i - \! 1}^{\text{c}} + \delta} \lambda_{0}(z) \exp{\left( \sum \beta_k x_k \right)} dz \right]} \notag \\
    =& \exp{\left[ -\exp{\left( \sum \beta_k x_k \right)\int_{t_{i - \! 1}^{\text{c}}}^{t_{i - \! 1}^{\text{c}} + \delta} \lambda_{0}(z) dz} \right]} \notag \\
    =& \exp{\left[ -\exp{\left( \sum \beta_k x_k \right) \cdot \exp{(\alpha_k)}} \right]} \notag \\
    =& \exp{\left[ -\exp{\left( \alpha_k + \sum \beta_k x_k \right)} \right]},
\end{align}
where $\alpha_k = \log{\left[ \int_{t_{i - \! 1}^{\text{c}}}^{t_{i - \! 1}^{\text{c}} + \delta} \lambda_{0}(z) dz \right]}$.
Next, we use use $p_{t_i}$ to rewrite \eqref{eq:appendix_cox_log_conditional}:
\begin{equation*}
    1 - p_{t_i} = \exp{\left[ -\exp{\left( \alpha_k + \sum \beta_k x_k \right)} \right]},
\end{equation*}
and write $1 - p_{t_i}$ in terms of a power series as:
\begin{equation*}
    1 - p_{t_i} = 1 - G + \frac{G^2}{2!} - \frac{G^3}{3!} + \cdots + (-1)^n \frac{G^n}{n!} + \cdots
\end{equation*}
where $G = \exp{\left( \alpha_{k} + \sum \beta_k x_k \right)}$. When the probability of an event in each interval is small, $G$ will be small, allowing higher-order terms of G to be neglected, leaving only $1 - G$ (\cite{abbott1985logistic}).  

\paragraph{Logistic Regression Model.}
By definition of the logit transform (\cite{mccullagh_glm_4}):
\begin{equation*}
    \log{\left[ \frac{p_{t_i}}{1 - p_{t_i}} \right]} = \alpha_{k} + \sum \beta_k x_k \Rightarrow 1 - p_{t_i} = \frac{1}{1 + \exp{\left[\alpha_{k} + \sum \beta_k \alpha_k\right]}}
\end{equation*}
In case of $|\exp{\left(\alpha_{k} + \sum \beta_k \alpha_k\right)}| < 1$, $1 - p_{t_i}$ can be expanded using geometric series as:
\begin{equation*}
    1 - p_{t_i} = 1 - G + G^2 - G^3 + \cdots + (-1)^n G^n + \cdots
\end{equation*}

\noindent Similarly, when the probability of an event in each interval is small, leaving only $1 - G$ (\cite{abbott1985logistic}). 

In our study, the event probability refers to the failure of a component, such as a bus or line, within a $2$-hour window. Based on historical data, these failure probabilities are small. Therefore, it is appropriate to approximate the Cox' PH model using generalized logistic regression (\cite{richard_epdr_4, brown2004failure}). 

\section{Clarification of Claim \ref{claim:non_convex}} \label{appendix:proof_non_convex}
We build this discussion on the fact that a twice differentiable function $f:\mathbf{D}\subset\mathbb{R}^n \rightarrow \mathbb{R}$ is convex (concave) if and only if its Hessian is positive (negative) semi-definite (\cite{boyd_co_3}), i.e.:
\begin{equation*}
    H(\bm{v}) = \left[ h_{i, j}(\bm{v}) \right] = \left[ \frac{\partial^2 f(\bm{v})}{\partial v_i \partial v_j} \right] \succeq (\preceq) \text{ } 0.
\end{equation*}
Consider the function $\mathcal{C}_{1}^{\text{eens}}\left( \bm{v}_{\text{b}1}\right)$, which represents the expected cost of energy not served by bus $i=1$ at time step $t=1$:
\begin{equation*}
    \mathcal{C}_{1}^{\text{eens}}\left( \bm{v}_{\text{b}1}\right) = Q_{1, 1}^{\rm b} \left[ 1 - \left( 1 - \rm{Pr}_{1, 1}^{\rm b} \right) \left( 1 - \rm{Pr}_{1, 1}^{\rm l} \right) \right], 
\end{equation*}
where
\begin{align*}
    Q_{1, 1}^{\rm b} &= w_{1}^{\rm c} p_{1, 1}^c + w_{1}^{\rm DG} p_{1, 1}^{\rm DG} + w_{1}^{\rm B, c} p_{1, 1}^{\rm B, c} + w_{1}^{\rm B, d} p_{1, 1}^{\rm B, d} + w_{1}^{\rm DR} p_{1, 1}^{\rm DR} \\
    \rm{Pr}_{1, 1}^{\text{b}} &= \frac{1}{1 + \lambda_{\text{b}, 1} \cdot e^{-\left(\beta_{1, 1}^{\rm b} \left|- p_{1, 1}^{\rm c} + p_{1, 1}^{\rm DG} - p_{1, 1}^{\rm B, c} + p_{1, 1}^{\rm B, d} + p_{1, 1}^{\rm DR}\right| + \beta_{2, 1}^{\rm b} {\rm AT}_1\right)}} \\
    \rm{Pr}_{1, 1}^{\text{l}} &= \frac{1}{1 + \lambda_{\text{l}, 1} \cdot e^{-\left(\beta_{1, 1}^{\rm l} l_{1, 1} + \beta_{2, 1}^{\rm l} {\rm AT}_1\right)}}.
\end{align*}

\noindent Note that the domain of all the variables, specifically in $\mathcal{C}_{1}^{\text{eens}}\left( \bm{v}_{\text{b}1}\right)$: $p_{1, 1}^{\rm B, c}$, $p_{1, 1}^{\rm B, d}$, $p_{1, 1}^{\rm DR}$, include the value $0$. For convenience of the below derivations and w.l.o.g., assume $p_{1, 1}^{\rm B, c} = 0$, $p_{1, 1}^{\rm B, d} = 0$ and $0 < p_{1, 1}^{\rm DG} \leq P_{1}^{\text{DG,max}}$. W.l.o.g. also assume that $- p_{1, 1}^{\rm c} + p_{1, 1}^{\rm DG} \geq 0$. Then:
\begin{align} \label{eq:eens_bus_1_full}
    &\mathcal{C}_{1}^{\text{eens}}\left( \bm{v}_{\text{b}1}\right) \notag \\
    =& (w_{1}^{\rm c} p_{1, 1}^c \! + \! w_{1}^{\rm DG} p_{1, 1}^{\rm DG}) \!\! \left[ \! 1 \! - \!\! \left( \!\! 1 \! - \! \frac{1}{1 + \lambda_{\text{b}, 1} e^{-\left(\beta_{1, \! 1}^{\rm b} p_{1, 1}^{\rm DG} \! + \beta_{2, 1}^{\rm b} {\rm AT}_1 \! \right)}} \!\! \right) \!\!\! \left( \!\! 1 \! - \! \frac{1}{1 + \lambda_{\text{l}, 1} e^{-\left(\beta_{1, \! 1}^{\rm l} l_{1, \! 1} \! + \beta_{2, \! 1}^{\rm l} {\rm AT}_1 \! \right)}} \!\! \right) \!\! \right].
\end{align}
Note that $w_{1}^{\rm c}$, $p_{1, 1}^c$, $w_{1}^{\rm DG}$, $\lambda_{\text{b}, 1}$, $\beta_{1, 1}^{\rm b}$, $\beta_{2, 1}^{\rm b}$, ${\rm AT}_1$, $\lambda_{\text{l}, 1}$, $\beta_{1, 1}^{\rm l}$ and $\beta_{2, 1}^{\rm l}$ are all parameters. For the remainder of this section, we use $\nu_{*} \in \mathbb{R}$ to denote some constant scalar terms. Rewrite \eqref{eq:eens_bus_1_full} as: 
\begin{equation*} 
    \mathcal{C}_{1}^{\text{eens}}\left( \bm{v}_{\text{b}1}\right) = (\nu_{1} + \nu_{2} p_{1, 1}^{\rm DG}) \left[ 1 - \left( 1 - \frac{1}{1 + \nu_{3} e^{-\left(\nu_{4} p_{1, 1}^{\rm DG}\right)}} \right) \left( 1 - \frac{1}{1 + \nu_{5} e^{-\left(\nu_{6} l_{1, 1}\right)}}\right)\right],
\end{equation*}
where $\nu_1 = \omega_1^{\text{c}}p_{1, 1}^{\text{c}} = 8043.2197$, $\nu_2 = \omega_1^{\text{DG}} = 2 \times 10^{4}$, $\nu_3 = \lambda_{\text{b}, 1}e^{-\beta_{2, 1}^{\text{b}}\text{AT}_{1}} = 5.0833 \times 10^{-12}$, $\nu_4 = \beta_{1, 1}^{\text{b}} = 0.3039$, $\nu_5 = \lambda_{\text{l}, 1}e^{-\beta_{2, 1}^{\text{l}}\text{AT}_{1}} = 1.1032 \times 10^{-11}$, $\nu_6 = \beta_{1,1}^{l} = 0.9501$. \\

\noindent The Hessian of $\mathcal{C}_{1}^{\text{eens}}\left( \bm{v}_{\text{b}1}\right)$ is
\begin{equation} \label{eq:hessian_r_1_1}
    H\left( p_{1, 1}^{\rm DG}, l_{1, 1} \right) = \begin{bmatrix}
        \frac{\partial^2 \mathcal{C}_{1}^{\text{eens}}\left( \bm{v}_{\text{b}1}\right)}{\partial {p_{1, 1}^{\rm DG}}^2} & \frac{\partial^2 \mathcal{C}_{1}^{\text{eens}}\left( \bm{v}_{\text{b}1}\right)}{\partial p_{1, 1}^{\rm DG} \partial l_{1, 1}} \\
        \frac{\partial^2 \mathcal{C}_{1}^{\text{eens}}\left( \bm{v}_{\text{b}1}\right)}{\partial l_{1, 1} \partial p_{1, 1}^{\rm DG}} & \frac{\partial^2 \mathcal{C}_{1}^{\text{eens}}\left( \bm{v}_{\text{b}1}\right)}{\partial {l_{1, 1}}^2}, 
    \end{bmatrix}
\end{equation}
where
\begin{align*}
    \frac{\partial^2 \mathcal{C}_{1}^{\text{eens}}\left( \bm{v}_{\text{b}1}\right)}{\partial {p_{1, 1}^{\rm DG}}^2} &= \frac{ 2\nu_2 \nu_3 \nu_4 \left( 1 - \frac{1}{\nu_5 e^{-\left(\nu_6 l_{1, 1}\right)} + 1} \right) e^{-\left(\nu_4 p_{1, 1}^{\rm DG} \right)}}{\left( \nu_3 e^{-\left( \nu_4 p_{1, 1}^{\rm DG} \right)} + 1 \right)^2} \notag \\
    &+ \frac{ 2\nu_3^2 \nu_4^2\left(\nu_2 p_{1, 1}^{\rm DG} + \nu_1 \right) \left( 1 - \frac{1}{\nu_5 e^{-\left(\nu_6 l_{1, 1}\right)} + 1} \right) e^{-\left(2\nu_4 p_{1, 1}^{\rm DG} \right)}}{\left( \nu_3 e^{-\left( \nu_4 p_{1, 1}^{\rm DG} \right)} + 1 \right)^3} \notag \\
    &- \frac{ \nu_3\nu_4^2 \left( \nu_2 p_{1, 1}^{\rm DG} + \nu_1 \right) \left( 1 - \frac{1}{\nu_5 e^{-\left(\nu_6 l_{1, 1}\right)} + 1} \right) e^{-\left(\nu_4 p_{1, 1}^{\rm DG} \right)}}{\left( \nu_3 e^{-\left( \nu_4 p_{1, 1}^{\rm DG} \right)} + 1 \right)^2} \notag \\
    \frac{\partial^2 \mathcal{C}_{1}^{\text{eens}}\left( \bm{v}_{\text{b}1}\right)}{\partial p_{1, 1}^{\rm DG} \partial l_{1, 1}} &= \frac{\partial^2 \mathcal{C}_{1}^{\text{eens}}\left( \bm{v}_{\text{b}1}\right)}{\partial l_{1, 1} \partial p_{1, 1}^{\rm DG}} \notag \\
    &= \frac{ \nu_2 \nu_5 \nu_6 \left( 1 - \frac{1}{\nu_3 e^{-\left(\nu_4 p_{1, 1}^{\rm DG}\right)} + 1} \right) e^{-\left(\nu_6 l_{1, 1} \right)}}{\left( \nu_5 e^{-\left( \nu_6 l_{1, 1} \right)} + 1 \right)^2} \notag \\
    &-\frac{ \nu_3 \nu_4 \nu_5 \nu_6 \left(\nu_2 p_{1, 1}^{\rm DG} + \nu_1\right) e^{-\left( \nu_4 p_{1, 1}^{\rm DG} \right)} e^{-\left(\nu_6 l_{1, 1}\right)} }{\left( \nu_3 e^{-\left( \nu_4 p_{1, 1}^{\rm DG} \right)} + 1 \right)^2 \left( \nu_5 e^{-\left( \nu_6 l_{1, 1} \right)} + 1 \right)^2} \notag \\
    \frac{\partial^2 \mathcal{C}_{1}^{\text{eens}}\left( \bm{v}_{\text{b}1}\right)}{\partial {l_{1, 1}}^2} &= \frac{ 2\nu_5^2 \nu_6^2\left(\nu_2 p_{1, 1}^{\rm DG} + \nu_1\right) \left( 1 - \frac{1}{\nu_3 e^{-\left(\nu_4 p_{1, 1}^{\rm DG}\right)} + 1} \right) e^{-\left(2\nu_6 l_{1, 1}\right)}}{\left( \nu_5 e^{-\left(\nu_6 l_{1, 1} \right)} + 1 \right)^3} \notag \\
    &- \frac{ \nu_5\nu_6^2(\nu_2 p_{1, 1}^{\rm DG} + \nu_1) \left( 1 - \frac{1}{\nu_3 e^{-\left(\nu_4 p_{1, 1}^{\rm DG}\right)} + 1} \right) e^{-\left(\nu_6 l_{1, 1} \right)}}{\left( \nu_5 e^{-\left( \nu_6 l_{1, 1} \right)} + 1 \right)^2}. \notag 
\end{align*}
It is now straightforward to find a parametrization such that Hessian \eqref{eq:hessian_r_1_1} is not positive (negative) semidefinite. For example setting
$p_{1, 1}^{\rm DG} = 0.1$, $l_{1, 1} = 1$ and choosing $\mathbf{x} = [0.1, 0.4]^T$ we have $\mathbf{x}^{T}H \mathbf{x} < 0$ (setting
$p_{1, 1}^{\rm DG} = 0.1$, $l_{1, 1} = 1$ and choosing $\mathbf{x} = [0.4, 0.1]^T$ \mbox{we have $\mathbf{x}^{T}H \mathbf{x} > 0$).}

To establish the claim that $\mathcal{C}^{\text{eens}}$ is non-convex (non-concave), we first recall the definition of $\mathcal{C}^{\text{eens}}$ as the sum across all buses at every time step. Given that the domain of all variables includes $0$'s, we can set all variables $p_{i, t}^{\diamond}$ for $i \in \mathcal{N}$, $t \in \mathcal{T}$, $\diamond = \{\text{DG}, \text{(B,c)}, \text{(B,d)}, \text{DR}\}$ and $l_{j, t}$ for $j \in \mathcal{N}^{+}$, $t \in \mathcal{T}$ to $0$, except for $p_{1,1}^{\text{DG}}$ and $l_{1,1}$. Under this configuration, we have that $\mathcal{C}^{\text{eens}} = \mathcal{C}_{1}^{\text{eens}}\left( \bm{v}_{\text{b}1}\right) + \nu_7$, where $\nu_7$ is a constant term. This demonstrates the non-convexity (non-concavity) of $\mathcal{C}^{\text{eens}}$ and confirms the claim.

\section{Proof of Claim \ref{claim:add_var_equal}} \label{appendix:proof_add_var_equal}

\begin{proposition}\label{prop:aux_val_equality}
    For a function $f = f(\bm{t})$ that can be written as $\tilde{f} = \tilde{f}\left( \bm{t}, \bm{x} \right)$, where $\bm{t} = \left(  t_1, t_2, \cdots, t_m \right)^T$, $\bm{x} = \left( x_1, x_2, \cdots, x_n \right)$. And $x_i$'s are linear combinations of $\bm{t}$, $i = 1, \cdots, n$, i.e.:
    \begin{equation} \label{eq:linear_trans}
        x_i = c_{i, 1} t_1 + c_{i, 2} t_2 + \cdots c_{i, m} t_m + c_{i, 0} = \bm{c}_i^T \bm{t} + c_{i,0}
    \end{equation}
    $\bm{c}_i = \left(c_{i, 1}, c_{i, 2}, \cdots, c_{i, m} \right)^T$, $i = 1, \cdots, n$ are constant vectors. The first order Taylor expansion $\mathfrak{T}\left( f | \bm{t}^{\star}\right)$ of $f$ w.r.t. variables $\bm{t}$ 
    at the point $\bm{t}^{\star}$ is equal to the first order Taylor expansion of $\tilde{f}$ w.r.t. variables $(\bm{t},\bm{x})$ at the point $(\bm{t}^{\star}, \bm{x}^{\star})$:
    \begin{equation*}
        \mathfrak{T}\big( \tilde{f} | (\bm{t}^{\star}, \bm{x}^{\star}) \big) = \mathfrak{T}\left( f | \bm{t}^{\star} \right).
    \end{equation*}
\end{proposition}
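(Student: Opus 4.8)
The plan is to reduce the statement to the multivariate chain rule, with the affine structure \eqref{eq:linear_trans} doing the essential work. First I would write both first-order expansions explicitly. Setting $\bm{x}^{\star}$ to be the image of $\bm{t}^{\star}$ under \eqref{eq:linear_trans}, the expansion of $\tilde{f}$ in the joint variables is
\begin{equation*}
\mathfrak{T}\big(\tilde{f}\,|\,(\bm{t}^{\star},\bm{x}^{\star})\big) = \tilde{f}(\bm{t}^{\star},\bm{x}^{\star}) + \sum_{j=1}^{m}\frac{\partial \tilde{f}}{\partial t_j}(\bm{t}^{\star},\bm{x}^{\star})\,(t_j - t_j^{\star}) + \sum_{i=1}^{n}\frac{\partial \tilde{f}}{\partial x_i}(\bm{t}^{\star},\bm{x}^{\star})\,(x_i - x_i^{\star}),
\end{equation*}
whereas $\mathfrak{T}(f\,|\,\bm{t}^{\star})$ involves only the variables $\bm{t}$.

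Then I would record two facts. Because $\bm{x}^{\star}$ is defined as the image of $\bm{t}^{\star}$ under \eqref{eq:linear_trans}, the constant terms agree: $\tilde{f}(\bm{t}^{\star},\bm{x}^{\star}) = f(\bm{t}^{\star})$. Because \eqref{eq:linear_trans} is affine, imposing the constraint gives $x_i - x_i^{\star} = \bm{c}_i^{T}(\bm{t}-\bm{t}^{\star})$ \emph{exactly}, with no higher-order remainder. Collecting the Jacobian entries $c_{i,j} = \partial x_i/\partial t_j$ into a matrix $C$ whose rows are the $\bm{c}_i^{T}$, the substituted linear term becomes $(\nabla_{\bm{x}}\tilde{f})^{T} C(\bm{t}-\bm{t}^{\star})$.

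The crux is the chain-rule identity $\nabla_{\bm{t}} f = \nabla_{\bm{t}}\tilde{f} + C^{T}\nabla_{\bm{x}}\tilde{f}$ evaluated at the expansion point, i.e. $\partial f/\partial t_j = \partial \tilde{f}/\partial t_j + \sum_i (\partial \tilde{f}/\partial x_i)\,c_{i,j}$. Substituting the affine relation into $\mathfrak{T}(\tilde{f}\,|\,(\bm{t}^{\star},\bm{x}^{\star}))$ and grouping the coefficient of each $(t_j - t_j^{\star})$ reproduces precisely this gradient, so the substituted expansion equals $f(\bm{t}^{\star}) + (\nabla_{\bm{t}} f)^{T}(\bm{t}-\bm{t}^{\star}) = \mathfrak{T}(f\,|\,\bm{t}^{\star})$.

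The main obstacle is conceptual rather than computational: $\mathfrak{T}(\tilde{f}\,|\,(\bm{t}^{\star},\bm{x}^{\star}))$ is a priori a function of the \emph{independent} variables $(\bm{t},\bm{x})$, and the asserted equality must be understood after imposing $\bm{x}=\bm{x}(\bm{t})$. The affineness of \eqref{eq:linear_trans} is exactly what guarantees that this substitution introduces no second- or higher-order terms, so that the result is again an affine function of $\bm{t}$ and hence a legitimate first-order expansion that coincides with that of $f$; for a genuinely nonlinear map the substituted expression would retain curvature and the identity would fail. To deploy the proposition for Claim~\ref{claim:add_var_equal}, I would note that around a fixed expansion point at which each net-power argument has a definite sign, the absolute-value maps \eqref{eq:addition_var_sub}, \eqref{eq:addition_var_bu} are locally affine in $\bm{v}$, so the proposition applies with $\bm{t}=\bm{v}$ and the $x_i$ taken to be the auxiliary variables $\tilde{p}^{\rm b}$.
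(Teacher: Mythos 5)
Your proposal is correct and follows essentially the same route as the paper's proof: both rest on the chain-rule identity $\partial f/\partial t_j = \partial \tilde{f}/\partial t_j + \sum_i c_{i,j}\,\partial \tilde{f}/\partial x_i$, the exactness of the affine substitution $x_i - x_i^{\star} = \bm{c}_i^{T}(\bm{t}-\bm{t}^{\star})$, and the agreement of the constant terms $\tilde{f}(\bm{t}^{\star},\bm{x}^{\star}) = f(\bm{t}^{\star})$; the paper merely packages the chain rule through auxiliary linear maps $l_k$ where you use the Jacobian matrix $C$ directly. Your closing remark on applying the proposition to Claim~\ref{claim:add_var_equal} via the local affineness of the absolute-value maps at a point of definite sign matches the paper's own justification.
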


\begin{proof}
We have:
    \begin{align}
        &\mathfrak{T}\left( f | \bm{t}^{\star} \right) = \sum_{j = 1}^{j = m} \frac{\partial f}{\partial t_j}\left( \bm{t}^{\star} \right)  \left( t_{j} - t_{j}^{\star} \right) + f\left( \bm{t}^{\star} \right) 
        \notag \\
        &\mathfrak{T}\left( \tilde{f} | \bm{t}^{\star}, \bm{x}^{\star}  \right) \notag \\
        =& \sum_{j = 1}^{j = m} \frac{\partial \tilde{f}}{\partial t_j}\left( \bm{t}^{\star}, \bm{x}^{\star} \right) \left( t_j - t_{j}^{\star} \right) + \sum_{i = 1}^{i = n} \frac{\partial \tilde{f}}{\partial x_i}\left( \bm{t}^{\star}, \bm{x}^{\star} \right) \left( x_i - x_{i}^{\star} \right) + \tilde{f}\left( \bm{t}^{\star}, \bm{x}^{\star} \right) \label{eq:aux_taylor}
    \end{align}
    Further, per definition of $\tilde{f}$: 
    \begin{equation*}
        \tilde{f} = \tilde{f}\left[ l_k \left( \bm{t}\right), k = 1, \cdots, m+n \right]
    \end{equation*}
    where we introduce $l_k\left( \bm{t}\right) = \bm{c}_k^T \bm{t} + c_{k, 0}$ as linear mappings in addition to \eqref{eq:linear_trans}.
    \begin{align*}
        c_{k, j} = \begin{cases}
            0 & \text{if } k \neq j \cap k \leq m \\
                1 & \text{if } k = j \cap k \leq m \\
                c_{*, *} &  \text{otherwise}
        \end{cases}.
    \end{align*}
    where $k = 1, \cdots, m+n$; $j = 1, \cdots, m$; $c_{\ast, \ast}$ and $c_{k, 0}$ are any constants. \\
    
    \noindent Per the chain rule we have:
    \begin{align} \label{eq:ori_aux_eq}
        &\mathfrak{T}\left( \tilde{f} | \bm{t}^{\star}, \bm{x}^{\star} \right) =\sum_{j = 1}^{j = m} \sum_{k = 1}^{k = m+n}\frac{\partial \tilde{f}}{\partial l_{k}} \frac{\partial l_{k}}{\partial t_j}\left( \bm{t}^{\star}, \bm{x}^{\star} \right) \left( t_j - t_j^{\star} \right) + \tilde{f}\left( \bm{t}^{\star}, \bm{x}^{\star} \right) \notag \\
        =& \sum_{j = 1}^{j = m} \left[ \sum_{k = 1}^{k = m}\frac{\partial \tilde{f}}{\partial l_{k}} \frac{\partial l_{k}}{\partial t_j}\left( \bm{t}^{\star}, \bm{x}^{\star} \right) \left( t_j - t_j^{\star} \right) + \sum_{k = m+1}^{k = m+n}\frac{\partial \tilde{f}}{\partial l_{k}} \frac{\partial l_{k}}{\partial t_j}\left( \bm{t}^{\star}, \bm{x}^{\star} \right) \left( t_j - t_j^{\star}\right) \right] \notag \\
        +& \tilde{f} \left( \bm{t}^{\star}, \bm{x}^{\star} \right).
    \end{align}
    Note that ${\partial l_j}/{\partial t_j} = 1$ and ${\partial l_k}/{\partial t_j} = 0$ for $k \neq j$. Thus, the first term in the R.H.S. of \eqref{eq:ori_aux_eq} is:
    \begin{equation*}
        \sum_{j = 1}^{j = m}\frac{\partial \tilde{f}}{\partial t_j} \left( \bm{t}^{\star}, \bm{x}^{\star} \right) \left( t_j - t_j^{\star} \right).
    \end{equation*}
    Because $\tilde{f}$ is a reformulation of $f$: the expansion point $\left( \bm{t}^{\star}, \bm{x}^{\star} \right) = \left( \bm{t}^{\star} \right)$ and $\tilde{f}\left( \bm{t}^{\star}, \bm{x}^{\star} \right) = f(\bm{t}^{\star})$. Therefore, the first term of \eqref{eq:aux_taylor} is equivalent to the first term of \eqref{eq:ori_aux_eq}. Since $x_i$ are linear combinations of $t_1, t_2, \cdots, t_,$, the second term in the R.H.S. of \eqref{eq:ori_aux_eq} can be written as:
    \begin{equation*}
        \sum_{j = 1}^{j = m}\sum_{k = m+1}^{k = m+n}\frac{\partial \tilde{f}}{\partial l_{k}} \frac{\partial l_{k}}{\partial t_j}\left( \bm{t}^{\star}, \bm{x}^{\star} \right) \left( t_j - t_j^{\star} \right) = \sum_{j = 1}^{j = m}\sum_{i = 1}^{i = n} (c_{i, j}) \frac{\partial \tilde{f}}{\partial x_{i}} (\bm{t}^{\star}, \bm{x}^{\star}) (t_j - t_j^{\star}).
    \end{equation*}
    Finally, the remaining summation terms in \eqref{eq:aux_taylor} can be written as
    \begin{align*}
        & \sum_{i = 1}^{i = n}\frac{\partial \tilde{f}}{\partial x_i}\left( \bm{t}^{\star} \right) \left(\left( c_{i,0} - c_{i, 0} \right) + \sum_{j = 1}^{j = m}c_{i, j} \left(t_{j} - t_{j}^{\star}\right)\right) 
        \\
        =& \sum_{i = 1}^{i = n}\frac{\partial \tilde{f}}{\partial x_i}\left(  \bm{t}^{\star} \right) \left(\sum_{j = 1}^{j = m}c_{i, j} \left(t_{j} - t_{j}^{\star} \right)\right) = \sum_{j = 1}^{j = m}\sum_{i = 1}^{i = n} \frac{\partial \tilde{f}}{\partial x_{i}} (c_{i, j}) (\bm{t}^{\star}) (t_j - t_j^{\star}),
    \end{align*}
    which is equivalent to the second term in \eqref{eq:aux_taylor}. 
\end{proof}
Claim \ref{claim:add_var_equal} follows directly from Proposition~\ref{prop:aux_val_equality} and the observation that in an optimal solution of CRM $\tilde{p}_{0,t}^b$ will be equal to either \eqref{eq:addition_var_sub_const_p} or \eqref{eq:addition_var_sub_const_n} and   $\tilde{p}_{i,t}^b$ will be equal to either \eqref{eq:addition_var_bu_const_p} or \eqref{eq:addition_var_bu_const_n}, respectively. $\blacksquare$

\newpage
\section{IEEE 33 Bus Test System Layout} \label{appendix:sys_layout}
\begin{enumerate}
    \item \textbf{DGs}: Bus 15, Bus 16, Bus 17, Bus 18, Bus 21, Bus 23, Bus 24, Bus 26, Bus 30
    \item \textbf{BESSs}: Bus 17, Bus 18, Bus 23, Bus 24
    \item \textbf{DR}: Bus 2, Bus 3, Bus 5
\end{enumerate}
\begin{figure}[htbp]
\centering
\begin{tikzpicture}[>={Latex[width=2mm,length=2mm]},
                    bus/.style={circle, draw, text centered, minimum size=0.75cm, inner sep=2pt},
                    unit/.style={align=center, font=\scriptsize, execute at begin node=\setlength{\baselineskip}{10pt}},
                    scale=0.5, every node/.style={scale=0.5}]

  \node[bus] (bus0) {Substation};
  \node[bus, below=0.4cm of bus0] (bus1) {1};
  \node[bus, below=0.4cm of bus1] (bus2) {2};
  \node[bus, below=0.4cm of bus2] (bus3) {3};
  \node[bus, below=0.4cm of bus3] (bus4) {4};
  \node[bus, below=0.4cm of bus4] (bus5) {5};
  \node[bus, below=0.4cm of bus5] (bus6) {6};
  \node[bus, below=0.4cm of bus6] (bus7) {7};
  \node[bus, below=0.4cm of bus7] (bus8) {8};
  \node[bus, below=0.4cm of bus8] (bus9) {9};
  \node[bus, below=0.4cm of bus9] (bus10) {10};
  \node[bus, below=0.4cm of bus10] (bus11) {11};
  \node[bus, below=0.4cm of bus11] (bus12) {12};
  \node[bus, below=0.4cm of bus12] (bus13) {13};
  \node[bus, below=0.4cm of bus13] (bus14) {14};
  \node[bus, below=0.4cm of bus14] (bus15) {15};
  \node[bus, below=0.4cm of bus15] (bus16) {16};
  \node[bus, below=0.4cm of bus16] (bus17) {17};

  \node[bus, left=3.5cm of bus3] (bus22) {22};
  \node[bus, below=0.4cm of bus22] (bus23) {23};
  \node[bus, below=0.4cm of bus23] (bus24) {24};

  \node[bus, left=2cm of bus6] (bus25) {25};
  \node[bus, below=0.4cm of bus25] (bus26) {26};
  \node[bus, below=0.4cm of bus26] (bus27) {27};
  \node[bus, below=0.4cm of bus27] (bus28) {28};
  \node[bus, below=0.4cm of bus28] (bus29) {29};
  \node[bus, below=0.4cm of bus29] (bus30) {30};
  \node[bus, below=0.4cm of bus30] (bus31) {31};
  \node[bus, below=0.4cm of bus31] (bus32) {32};

  \node[bus, right=3cm of bus2] (bus18) {18};
  \node[bus, below=0.4cm of bus18] (bus19) {19};
  \node[bus, below=0.4cm of bus19] (bus20) {20};
  \node[bus, below=0.4cm of bus20] (bus21) {21};

  \draw[->] (bus0) -- (bus1);
  \draw[->] (bus1) -- (bus2);
  \draw[->] (bus2) -- (bus3);
  \draw[->] (bus3) -- (bus4);
  \draw[->] (bus4) -- (bus5);
  \draw[->] (bus5) -- (bus6);
  \draw[->] (bus6) -- (bus7);
  \draw[->] (bus7) -- (bus8);
  \draw[->] (bus8) -- (bus9);
  \draw[->] (bus9) -- (bus10);
  \draw[->] (bus10) -- (bus11);
  \draw[->] (bus11) -- (bus12);
  \draw[->] (bus12) -- (bus13);
  \draw[->] (bus13) -- (bus14);
  \draw[->] (bus14) -- (bus15);
  \draw[->] (bus15) -- (bus16);
  \draw[->] (bus16) -- (bus17);
  
  \draw[->] (bus2) -| (bus22);
  \draw[->] (bus22) -- (bus23);
  \draw[->] (bus23) -- (bus24);

  \draw[->] (bus5) -| (bus25);
  \draw[->] (bus25) -- (bus26);
  \draw[->] (bus26) -- (bus27);
  \draw[->] (bus27) -- (bus28);
  \draw[->] (bus28) -- (bus29);
  \draw[->] (bus29) -- (bus30);
  \draw[->] (bus30) -- (bus31);
  \draw[->] (bus31) -- (bus32);

  \draw[->] (bus1) -| (bus18);
  \draw[->] (bus18) -- (bus19);
  \draw[->] (bus19) -- (bus20);
  \draw[->] (bus20) -- (bus21);

  \node[unit, right=0.05cm of bus2]{DR};
  \node[unit, right=0.05cm of bus3]{DR};
  \node[unit, right=0.05cm of bus5]{DR};
  \node[unit, right=0.05cm of bus15]{DG};
  \node[unit, right=0.05cm of bus16]{DG};
  \node[unit, right=0.05cm of bus17]{DG\\Battery};
  \node[unit, right=0.05cm of bus18]{DG\\Battery};
  \node[unit, right=0.05cm of bus21]{DG};
  \node[unit, left=0.05cm of bus23]{DG\\Battery};
  \node[unit, left=0.05cm of bus24]{DG\\Battery};
  \node[unit, right=0.05cm of bus26]{DG};
  \node[unit, right=0.05cm of bus30]{DG};
  
\end{tikzpicture}
\caption{33 Bus Power System Radial Network}
\label{fig:33_bus_power_radial_system}
\end{figure}
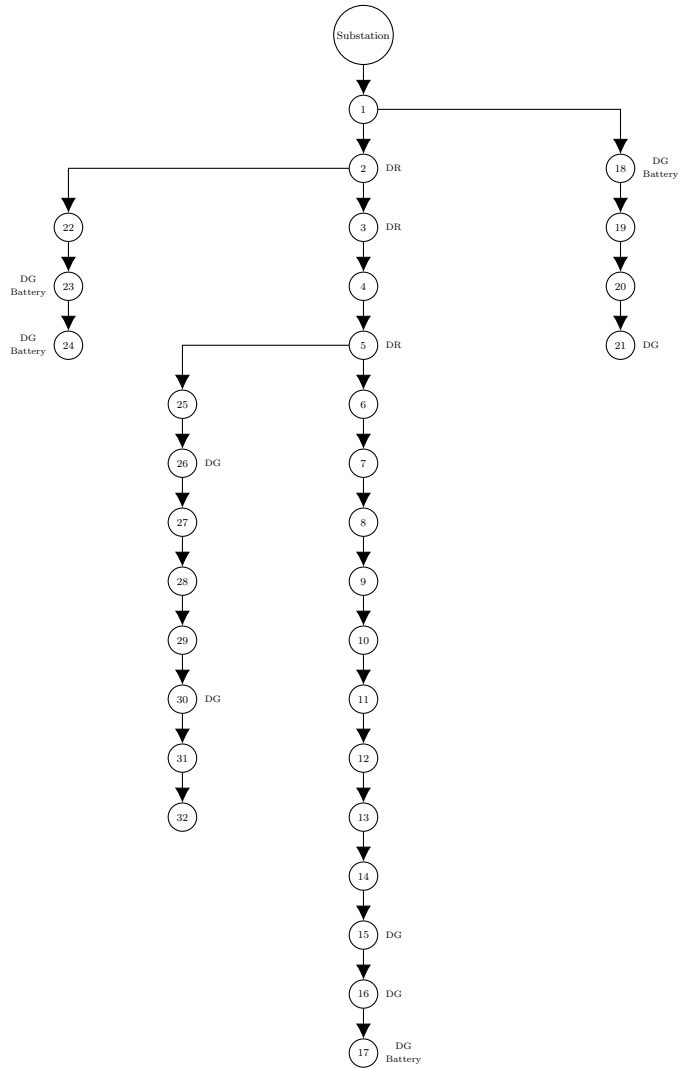

\newpage
\section{Parameter Values} \label{appendix:para_values}
\begin{table}[htbp]
    \centering
    \resizebox{\textwidth}{!}{
        \begin{tabular}{cc}
            \hline
            \textbf{Parameter} & \textbf{Value} \\
            \hline
            $C_t^{\rm sub}$, $\bm{C_{i, t}} = (c_{i, t}^{\rm DG}, c_{i, t}^{\rm B, c}, c_{i, t}^{\rm B, d}, c_{i, t}^{\rm DR})$; $i \in \mathcal{N}^+$, $t \in \mathcal{T}$ & $\$50$, $(\$8, -\$15, \$28, \$100)$  \\
            $P_{i}^{\text{DG, min}}$, $i = 15, 16, 17, 18, 21, 23, 24, 26, 30$ & $(0.000, 0.000, 0.000, 0.000, 0.000, 0.000, 0.000, 0.000, 0.000)$ p.u. \\
            $Q_{i}^{\text{DG, min}}$, $i = 15, 16, 17, 18, 21, 23, 24, 26, 30$ & $(0.000, 0.000, 0.000, 0.000, 0.000, 0.000, 0.000, 0.000, 0.000)$ p.u. \\
            $P_{i}^{\text{DG, max}}$, $i = 15, 16, 17, 18, 21, 23, 24, 26, 30$ & $(0.400, 0.240, 0.100, 0.100, 0.100, 0.220, 0.400, 0.150, 0.150)$ p.u. \\
            $Q_{i}^{\text{DG, max}}$, $i = 15, 16, 17, 18, 21, 23, 24, 26, 30$ & $(0.600, 0.360, 0.150, 0.150, 0.150, 0.330, 0.600, 0.225, 0.225)$ p.u. \\
            $P_{i}^{\text{B, min}}$, $Q_{i}^{\text{B, min}}$; $i = 17, 18, 23, 24$ & $(0.000, 0.000, 0.000, 0.000)$, $(0.000, 0.000, 0.000, 0.000)$p.u. \\
            $P_{i}^{\text{B, max}}$, $Q_{i}^{\text{B, max}}$; $i = 17, 18, 23, 24$ & $(0.240, 0.240, 0.240, 0.240)$, $(0.360, 0.360, 0.360, 0.360)$ p.u. \\
            $P_{i}^{\text{DR, min}}$, $Q_{i}^{\text{DR, min}}$; $i = 2, 3, 5$ & $(0.000, 0.000, 0.000)$, $(0.000, 0.000, 0.000)$ p.u. \\
            $P_{i}^{\text{DR, max}}$, $Q_{i}^{\text{DR, max}}$; $i = 2, 3, 5$ & $(0.240, 0.240, 0.240)$, $(0.360, 0.360, 0.360)$ p.u. \\
            $\delta_i$, $\eta_i^{\rm B, c}$, $\eta_i^{\rm B, d}$; $i = 17, 18, 23, 24$ & $(0\%, 0\%, 0\%, 0\%)$, $(100\%, 100\%, 100\%, 100\%)$, $(100\%, 100\%, 100\%, 100\%)$ \\
            ${\rm {SOC}}_{i}^{\rm B, min}$, ${\rm {SOC}}_{i}^{\rm B, max}$; $i = 17, 18, 23, 24$ & $(0\%, 0\%, 0\%, 0\%)$, $(100\%, 100\%, 100\%, 100\%)$ \\
            $w_{\rm sub}$, $\bm{w_{{b}_i}} = (w_{i}^{\rm c}, w_{i}^{\rm DG}, w_{i}^{\rm B, c}, w_{i}^{\rm B, d}, w_{i}^{\rm DR})$; $i \in \mathcal{N}^{+}$ & $\$1 \times 10^{5}$, $(\$1\times{10}^5, \$2\times{10}^4, \$2\times{10}^4, \$2\times{10}^4, \$2\times{10}^4)$ \\
            $\rm{AT}_t$; $t \in \mathcal{T}$ & $(21.5, 19.3, 17.9, 16.3, 15.2, 15.7, 18.2, 22.0, 23.8, 25.3, 26.1, 25.8)$ $\text{C}^{\circ}$ \\
            $V_{0}^{\text{con}}$, $V_{i}^{\text{min}}$, $V_{i}^{\text{max}}$, $G_j$, $B_j$; $\forall i \in \mathcal{N}^{+}$, $\forall j \in \mathcal{N}$ & $1.03, 0.9, 1.1, 0, 0$ p.u. \\
            \hline
        \end{tabular}
    }
    \caption{CM And CRM Parameter Values}
    \label{tab:para_val}
\end{table}

\begin{table}[htbp]
    \centering
    \resizebox{0.98\textwidth}{!}{
        \begin{tabular}{c||ccc||ccc}
            \hline
            \textbf{Bus/Line} $i$, $i \in \mathcal{N}$ & $\lambda_{b, i}$ & $\beta_{1, i}^{b}$ & $\beta_{2, i}^{b}$ & $\lambda_{l, i}$ & $\beta_{1, i}^{l}$ & $\beta_{2, i}^{l}$ \\ \hline
            0  & 1156113700 & 1.8307053 & 0.24685119 & -          & -           & -          \\ 
            1  & 268660350  & 0.30390543 & 0.30673215 & 460174750  & 0.9500556   & 0.24567656 \\ 
            2  & 298944960  & 0.2884359  & 0.31724855 & 344302800  & 1.0492594   & 0.25378057 \\ 
            3  & 242551490  & 0.44588602 & 0.311652   & 443482800  & 1.4735763   & 0.2638999  \\ 
            4  & 275446140  & 0.18725057 & 0.3119664  & 342151580  & 1.5987376   & 0.272045   \\ 
            5  & 262219680  & 0.17005566 & 0.30642742 & 318478430  & 1.7211572   & 0.26812083 \\ 
            6  & 174550750  & 0.70556873 & 0.2923882  & 196448900  & 2.2786012   & 0.31977552 \\ 
            7  & 623449400  & 0.33577964 & 0.32093257 & 272240220  & 0.9843094   & 0.3327228  \\ 
            8  & 177938450  & 0.22490926 & 0.29845428 & 487715700  & 4.2256393   & 0.34695134 \\ 
            9  & 188718510  & 0.21641213 & 0.3026553  & 437785540  & 6.201332    & 0.3500279  \\ 
            10 & 269642400  & 0.13316843 & 0.30742267 & 826891460  & 7.019942    & 0.362927   \\ 
            11 & 308211000  & 0.19764385 & 0.3135902  & 1070140400 & 5.5902643   & 0.36101142 \\ 
            12 & 265625570  & 0.18195678 & 0.3100992  & 1477688300 & 6.2913237   & 0.3642413  \\ 
            13 & 315843360  & 0.39239794 & 0.3171655  & 1354310000 & 5.034037    & 0.35175356 \\ 
            14 & 224416740  & 0.23627062 & 0.30976433 & 2211711700 & 5.59901     & 0.34484866 \\ 
            15 & 563844740  & 2.2013748  & 0.31617218 & 3357516000 & 5.608351    & 0.3432856  \\ 
            16 & 387065000  & 1.2012444  & 0.31643263 & 256633400  & 1.7178524   & 0.3328324  \\ 
            17 & 288559940  & 1.0218177  & 0.31374693 & 269417760  & 0.0966832   & 0.3339903  \\ 
            18 & 229609400  & 1.2618285  & 0.31427383 & 181219700  & 1.3357927   & 0.32394075 \\ 
            19 & 143127070  & 0.31744176 & 0.28802255 & 226100530  & 0.46914005  & 0.33014536 \\ 
            20 & 193695630  & 0.34026065 & 0.30224663 & 222501800  & 0.11520606  & 0.32999086 \\ 
            21 & 272977020  & 0.9444536  & 0.30839005 & 300525120  & -0.05748484 & 0.33137864 \\ 
            22 & 246455710  & 0.39065266 & 0.31751642 & 246405870  & 2.1500254   & 0.32351357 \\ 
            23 & 221162200  & 0.8296937  & 0.28750572 & 219469570  & 1.6886002   & 0.32250196 \\ 
            24 & 421450500  & 3.4731283  & 0.32212225 & 221565440  & 1.151456    & 0.32871062 \\ 
            25 & 265509070  & 0.18225878 & 0.30883035 & 548647550  & 3.6369984   & 0.2874418  \\ 
            26 & 230688850  & 0.6348009  & 0.29217732 & 364029570  & 3.6293347   & 0.28503045 \\ 
            27 & 307971230  & 0.1981767  & 0.31558347 & 916634240  & 2.8616748   & 0.29052076 \\ 
            28 & 275704770  & 0.35590678 & 0.30712327 & 648119200  & 3.3561606   & 0.28549427 \\ 
            29 & 281854530  & 0.5795351  & 0.30781204 & 489752540  & 3.7715204   & 0.29009366 \\ 
            30 & 420545500  & 1.5111128  & 0.32065573 & 265131570  & 1.0008259   & 0.331785   \\ 
            31 & 214335260  & 0.758895   & 0.3024018  & 233778380  & 1.2644929   & 0.32463384 \\ 
            32 & 312010700  & 0.14149183 & 0.30900797 & 214972750  & -0.08237231 & 0.32953462 \\ 
            \hline
        \end{tabular}
    }
    \caption{Logistics Regression Model Parameter For Buses and Lines}
    \label{tab:para_logistic}
\end{table}

\newpage
\section{Objective Values} \label{appendix:obj_values}
The experiments were conducted on a system running Windows $10$, equipped with an $13^{th}$ Gen Inter(R) Core(TM) i$7$-$1360$P.
\begin{table}[htbp]
    \centering
    \resizebox{0.98\textwidth}{!}{
        \begin{tabular}{ccccccc}
            \hline
            \textbf{Iteration} & \textbf{CRM} (\$) & \textbf{CRM-APPX} (\$) & $\epsilon_{1}$ & $\epsilon_{2}$ & $\epsilon_{3}$ & \textbf{Comp. Time (Sec.)} \\
            \hline
            0  & 956.1050    & 956.1050    & -          & -          & -  & 10.0695          \\ 
            1  & 370712.8418 & 370496.6175 & -          & 15750.1645 & -        & 1920.1130  \\ 
            2  & 358365.1071 & 358222.7163 & 1.1884     & 12503.1271 & 0.0419   & 1932.1140  \\ 
            3  & 348404.2667 & 348308.1215 & 0.9920     & 10067.8763 & 0.0345   & 1913.2482  \\ 
            4  & 340290.2623 & 340225.5969 & 0.8269     & 8187.8056  & 0.0286   & 1912.5347  \\ 
            5  & 333626.9679 & 333583.8371 & 0.6927     & 6714.1001  & 0.0238   & 1911.5029  \\ 
            6  & 328117.7579 & 328089.3415 & 0.5815     & 5544.0868  & 0.0200   & 1944.9554  \\ 
            7  & 323537.0030 & 323518.4921 & 0.4901     & 4604.7458  & 0.0168   & 1953.1037  \\ 
            8  & 319710.9668 & 319699.0511 & 0.4135     & 3842.6189  & 0.0142   & 1954.5051  \\ 
            9  & 316501.2830 & 316493.6924 & 0.3495     & 3221.2426  & 0.0120   & 1969.4355  \\ 
            10 & 313800.6919 & 313795.9107 & 0.2956     & 2708.7347  & 0.0101   & 1971.3271  \\ 
            11 & 311593.8632 & 311591.0762 & 0.2502     & 2212.1642  & 0.0086   & 1966.0825  \\ 
            12 & 309748.2508 & 309746.6010 & 0.1935     & 1849.3361  & 0.0071   & 1955.1917  \\ 
            13 & 308186.5547 & 308185.5645 & 0.1605     & 1564.4392  & 0.0060   & 1935.7529  \\ 
            14 & 306863.5288 & 306862.9400 & 0.1360     & 1325.0978  & 0.0051   & 2033.7460  \\ 
            15 & 305741.7644 & 305741.4179 & 0.1153     & 1123.3664  & 0.0043   & 1911.3439  \\ 
            16 & 304790.0182 & 304789.8167 & 0.0978     & 953.0115   & 0.0037   & 1913.8146  \\ 
            17 & 303981.0040 & 303980.8894 & 0.0831     & 810.0296   & 0.0031   & 1925.7290  \\ 
            18 & 303294.9766 & 303294.9119 & 0.0714     & 686.8544   & 0.0027   & 1938.8149  \\ 
            19 & 302712.3328 & 302712.2975 & 0.0598     & 583.3270   & 0.0023   & 1933.4018  \\ 
            20 & 302217.3771 & 302217.3586 & 0.0509     & 495.5236   & 0.0019   & 1928.3233  \\ 
            21 & 301796.8510 & 301796.8420 & 0.0432     & 421.0017   & 0.0016   & 1935.1277  \\ 
            22 & 301439.5258 & 301439.5219 & 0.0367     & 357.7250   & 0.0014   & 1941.6433  \\ 
            23 & 301135.8713 & 301135.8702 & 0.0312     & 303.9918   & 0.0012   & 1935.4866  \\ 
            24 & 300877.8191 & 300877.8192 & 0.0265     & 258.3376   & 0.0010   & 1944.5957  \\ 
            25 & 300658.5111 & 300658.5118 & 0.0225     & 219.5498   & 0.0009   & 1932.6275  \\ 
            26 & 300472.1048 & 300472.1056 & 0.0191     & 186.6114   & 0.0007   & 1903.0784  \\ 
            27 & 300313.6920 & 300313.6929 & 0.0163     & 158.5870   & 0.0006   & 1926.1540  \\ 
            28 & 300179.0551 & 300179.0558 & 0.0138     & 134.7849   & 0.0005   & 1963.3941  \\ 
            29 & 300064.6003 & 300064.6009 & 0.0117     & 114.5801   & 0.0004   & 1986.8101  \\ 
            30 & 299967.3305 & 299967.3309 & 0.0100     & 97.3767    & 0.0004   & 1948.3431  \\ 
            31 & 299884.6557 & 299884.6561 & 0.0085     & 82.7656    & 0.0003   & 1924.9493  \\ 
            32 & 299814.3811 & 299814.3814 & 0.0072     & 70.3516    & 0.0003   & 1939.8494  \\ 
            33 & 299755.1160 & 299755.1162 & 0.0061     & 59.3488    & 0.0002   & 1944.3648  \\ 
            34 & 299704.1494 & 299704.1496 & 0.0062     & 51.0108    & 0.0002   & 1942.8310  \\ 
            35 & 299660.9327 & 299660.9327 & 0.0044     & 43.2579    & 0.0002   & 2118.2168  \\ 
            36 & 299624.2656 & 299624.2658 & 0.0038     & 36.7070    & 0.0001   & 2089.3453  \\ 
            37 & 299593.0856 & 299593.0857 & 0.0032     & 31.2142    & 0.0001   & 1953.7750  \\ 
            38 & 299566.5859 & 299566.5860 & 0.0027     & 26.5288    & 0.0001   & 1973.3946  \\ 
            39 & 299544.0586 & 299544.0586 & 0.0023     & 22.5520    & 8.85E-05 & 2066.2396  \\ 
            40 & 299524.9668 & 299524.9668 & 0.0020     & 19.1173    & 7.52E-05 & 1973.9318  \\ 
            41 & 299508.5901 & 299508.5901 & 0.0018     & 16.3924    & 6.38E-05 & 1996.2207  \\ 
            42 & 299495.8235 & 299495.8236 & 0.0014     & 12.7977    & 5.46E-05 & 2431.9818  \\ 
            43 & 299483.5762 & 299483.5762 & 0.0023     & 12.2566    & 4.27E-05 & 2143.4372  \\ 
            44 & 299474.6961 & 299474.6960 & 0.0010     & 8.9116     & 4.08E-05 & 2004.5153  \\ 
            45 & 299466.2461 & 299466.2461 & 0.0022     & 8.4644     & 2.97E-05 & 2034.9476  \\ 
            46 & 299459.9347 & 299459.9347 & 0.0011     & 6.3344     & 2.82E-05 & 2038.9545  \\ 
            47 & 299454.5197 & 299454.5196 & 0.0017     & 5.4340     & 2.11E-05 & 2029.5761  \\ 
            48 & 299449.2458 & 299449.2457 & 0.0015     & 5.2806     & 1.81E-05 & 1977.5771  \\ 
            \hline
        \end{tabular}
    }
    \caption{Objective Values And Computation Time For Each Iteration}
    \label{tab:obj_time_iteration}
\end{table}

\newpage
\printbibliography[title={References}]

@article{izadi2019optimal,
  title={Optimal placement of protective and controlling devices in electric power distribution systems: A MIP model},
  author={Izadi, Milad and Safdarian, Amir and Moeini-Aghtaie, Moein and Lehtonen, Matti},
  journal={IEEE Access},
  volume={7},
  pages={122827--122837},
  year={2019},
  publisher={IEEE}
}

@article{billinton1996optimal,
  title={Optimal switching device placement in radial distribution systems},
  author={Roy Billinton and Satish Jonnavithula},
  journal={IEEE transactions on power delivery},
  volume={11},
  number={3},
  pages={1646--1651},
  year={1996},
  publisher={IEEE}
}

@techreport{njbpu2022gridmod,
    author = {{Guidehouse Inc}},
    title = {Grid Modernization Study: New Jersey Board of Public Utilities},
    institution = {New Jersey Board of Public Utilities},
    year = {2022},
    url={https://www.nj.gov/bpu/pdf/reports/NJBPU%20Grid%20Modernization%20Final%20Report.pdf}
}

@article{andrews2022data,
  title={Data-driven examination of the impact energy efficiency has on demand response capabilities in institutional buildings},
  author={Andrews, Abigail and Roth, Jonathan and Jain, Rishee K and Mathieu, Johanna L},
  journal={Journal of Engineering for Sustainable Buildings and Cities},
  volume={3},
  number={2},
  pages={024501},
  year={2022},
  publisher={American Society of Mechanical Engineers}
}

@article{wang2017robust,
  title={Robust line hardening strategies for improving the resilience of distribution systems with variable renewable resources},
  author={Wang, Xu and Li, Zhiyi and Shahidehpour, Mohammad and Jiang, Chuanwen},
  journal={IEEE Transactions on Sustainable Energy},
  volume={10},
  number={1},
  pages={386--395},
  year={2017},
  publisher={IEEE}
}

@article{kim2018enhancing,
  title={Enhancing distribution system resilience with mobile energy storage and microgrids},
  author={Kim, Jip and Dvorkin, Yury},
  journal={IEEE Transactions on Smart Grid},
  volume={10},
  number={5},
  pages={4996--5006},
  year={2018},
  publisher={IEEE}
}

@article{baran1989optimal,
  title={Optimal capacitor placement on radial distribution systems},
  author={Baran, Mesut E and Wu, Felix F},
  journal={IEEE Transactions on power Delivery},
  volume={4},
  number={1},
  pages={725--734},
  year={1989},
  publisher={IEEE}
}

@book{griffith2022electrify,
  title={Electrify: An optimist’s playbook for our clean energy future},
  author={Griffith, Saul},
  year={2022},
  publisher={MIT Press}
}

@ARTICLE{thurner2018pandapower,
    author={L. Thurner and A. Scheidler and F. Sch{\"a}fer and J. Menke and J. Dollichon and F. Meier and S. Meinecke and M. Braun},
    journal={IEEE Transactions on Power Systems},
    title={pandapower — An Open-Source Python Tool for Convenient Modeling, Analysis, and Optimization of Electric Power Systems},
    year={2018},
    month={11},
    volume={33},
    number={6},
    pages={6510-6521},
    doi={10.1109/TPWRS.2018.2829021},
    ISSN={0885-8950}}

@INCOLLECTION{boyd_co_3,
   author = {Boyd, Stephen and Vandenberghe, Lieven},
   title={Convex Sets},
   booktitle = {Convex Optimization},
   publisher={Cambridge University Press},
   year={2004},
   chapter = {3},
   pages = {67-125},
   address = {Cambridge}
}

@INCOLLECTION{hoff_afcbsm_10,
   author={Peter D. Hoff},
   title = {Nonconjugate priors and Metropolis-Hastings algorithms},
   booktitle = {A First Course in Bayesian Statistical Methods},
   publisher={Springer},
   year={2009},
   chapter = {10},
   pages = {171-193},
   address = {New York},
   edition={1}
}

@INCOLLECTION{gelman_hmcmc_5,
   author={Steve Brooks and Andrew Gelman and Galin L. Jones and Xiao-Li Meng},
   title = {MCMC Using Hamiltonian Dynamics},
   booktitle = {Handbook of Markov Chain Monte Carlo},
   publisher={Chapman \& Hall/CRC},
   year={2011},
   chapter = {5},
   pages = {113-162},
   address = {New York},
   edition={1st ed.}
}

@INCOLLECTION{martin_bmacip_11,
   author = {Osvaldo A. Martin and Ravin Kumar and Junpeng Lao},
   title = {Appendiceal Topics},
   booktitle = {Bayesian Modeling and Computation in Python},
   publisher = {Chapman \& Hall/CRC},
   month = {12},
   year = {2021},
   chapter = {11},
   pages = {323-382},
   address = {Boca Raton}
}

@INCOLLECTION{mccullagh_glm_4,
   author = {P.McCullagh and J.A. Nelder},
   title = {Binary Data},
   booktitle = {Generalized Linear Model},
   publisher = {Routledge},
   year = {1989},
   chapter = {4},
   pages = {98-148},
   address = {New York},
   edition = {2nd ed.}
}

@INCOLLECTION{richard_epdr_3,
  author    = {Richard E. Brown},
  title     = {Interruption Causes},
  booktitle = {Electric Power Distribution Reliability},
  publisher = {CRC Press},
  year      = {2009},
  chapter   = {3},
  pages     = {107-161},
  address   = {Boca Raton},
  edition   = {2nd ed.}
}

@INCOLLECTION{richard_epdr_4,
  author    = {Richard E. Brown},
  title     = {Component Modeling},
  booktitle = {Electric Power Distribution Reliability},
  publisher = {CRC Press},
  year      = {2009},
  chapter   = {4},
  pages     = {163-190},
  address   = {Boca Raton},
  edition   = {2nd ed.}
}

@INCOLLECTION{pham_sre_1,
  author    = {Hoang Pham},
  title     = {Basic Probability, Statistics, and Reliability},
  booktitle = {Statistical Reliability Engineering: Methods, Models and Applications},
  publisher = {Springer Cham},
  year      = {2021},
  chapter   = {1},
  pages     = {1-65},
  address   = {Switzerland},
  edition   = {1st ed.}
}

@INCOLLECTION{schittkowski1995sequential,
   author="Schittkowski, K. and Zillober, C.",
   editor="Marti, Kurt and Kall, Peter",
   title="Sequential Convex Programming Methods",
   bookTitle="Stochastic Programming: Numerical Techniques and Engineering Applications",
   year="1995",
   publisher="Springer Berlin Heidelberg",
   address="Berlin, Heidelberg",
   pages="123--141"
}

@ARTICLE{cox1972,
    author = {Cox, D. R.},
    title = {Regression Models and Life-Tables.},
    journal = {Journal of the Royal Statistical Society. Series B (Methodological)},
    year = {1972},
    volume = {34},
    number = {2},
    pages = {pp. 187–220}
}

@ARTICLE{duane1987hybrid,
   title = {Hybrid Monte Carlo},
   journal = {Physics Letters B},
   volume = {195},
   number = {2},
   pages = {216-222},
   year = {1987},
   author = {Simon Duane and A.D. Kennedy and Brian J. Pendleton and Duncan Roweth}
}

@ARTICLE{guanyang2022exact,
   author = {Guanyang Wang},
   title = {{Exact convergence analysis of the independent Metropolis-Hastings algorithms}},
   volume = {28},
   journal = {Bernoulli},
   number = {3},
   publisher = {Bernoulli Society for Mathematical Statistics and Probability},
   pages = {2012-2033},
   year = {2022}
}

@ARTICLE{john2023fifty,
    author = {John D. Kalbfleisch and Douglas E. Schaubel},
    title = {Fifty Years of the Cox Model},
    journal = {Annual Review of Statistics and Its Application},
    year = {2023},
    volume = {10},
    number = {1},
    pages = {1–23}
}

@ARTICLE{abbott1985logistic,
  title={Logistic regression in survival analysis},
  author={Abbott, Robert D},
  journal={American journal of epidemiology},
  volume={121},
  number={3},
  pages={465-471},
  year={1985},
  publisher={Oxford University Press}
}

@ARTICLE{low2014convex,
  title={Convex relaxation of optimal power flow—Part I: Formulations and equivalence},
  author={Low, Steven H},
  journal={IEEE Transactions on Control of Network Systems},
  volume={1},
  number={1},
  pages={15-27},
  year={2014},
  publisher={IEEE}
}

@ARTICLE{heidelberger1995fast,
   author = {Heidelberger, Philip},
   title = {Fast simulation of rare events in queueing and reliability models},
   year = {1995},
   publisher = {Association for Computing Machinery},
   address = {New York, NY, USA},
   volume = {5},
   number = {1},
   journal = {ACM Trans. Model. Comput. Simul.},
   month = {1},
   pages = {43–85}
}

@ARTICLE{jens1993exchangeably,
   author = {Jens Praestgaard and Jon A. Wellner},
   journal = {The Annals of Probability},
   number = {4},
   pages = {2053-2086},
   publisher = {Institute of Mathematical Statistics},
   title = {Exchangeably Weighted Bootstraps of the General Empirical Process},
   volume = {21},
   year = {1993}
}

@ARTICLE{king2001logistic, 
   title={Logistic Regression in Rare Events Data}, volume={9},
   number={2}, 
   journal={Political Analysis}, 
   author={King, Gary and Zeng, Langche}, 
   year={2001}, 
   pages={137–163}
}

@ARTICLE{mieth2018data,
  author={Mieth, Robert and Dvorkin, Yury},
  journal={IEEE Control Systems Letters}, 
  title={Data-Driven Distributionally Robust Optimal Power Flow for Distribution Systems}, 
  year={2018},
  volume={2},
  number={3},
  pages={363-368}
}

@ARTICLE{moein2016,
  author    = {Moein Choobineh and Paulo C. Tabares-Velasco and Salman Mohagheghi},
  title     = {Optimal energy management of a distribution network during the course of a heat wave},
  journal   = {Electric Power Systems Research},
  volume    = {130},
  pages     = {230-240},
  year      = {2016}
}

@ARTICLE{mieth2022risk,
  title={Risk-aware dimensioning and procurement of contingency reserve},
  author={Mieth, Robert and Dvorkin, Yury and Ortega-Vazquez, Miguel A},
  journal={IEEE Transactions on Power Systems},
  volume={38},
  number={2},
  pages={1081-1093},
  year={2022},
  publisher={IEEE}
}

@article{robert2018accelerating,
   author = {Robert, Christian P. and Elvira, Víctor and Tawn, Nick and Wu, Changye},
   title = {Accelerating MCMC algorithms},
   journal = {WIREs Computational Statistics},
   volume = {10},
   number = {5},
   pages = {e1435},
   year = {2018}
}

@ARTICLE{luca2022,
   title = {A reliability-centered methodology for identifying renovation actions for improving resilience against heat waves in power distribution grids},
   journal = {International Journal of Electrical Power \& Energy Systems},
   volume = {137},
   pages = {107813},
   year = {2022},
   author = {Luca Bellani and Michele Compare and Enrico Zio and Alessandro Bosisio and Bartolomeo Greco and Gaetano Iannarelli and Andrea Morotti}
}

@ARTICLE{abdin2019,
   title = {A modeling and optimization framework for power systems design with operational flexibility and resilience against extreme heat waves and drought events},
   journal = {Renewable and Sustainable Energy Reviews},
   volume = {112},
   pages = {706-719},
   year = {2019},
   author = {A.F. Abdin and Y.-P. Fang and E. Zio}
}

@ARTICLE{milad2023,
   title = {Mixed-integer exponential conic optimization for reliability enhancement of power distribution systems},
   journal = {Optimization and Engineering},
   year = {2023},
   author = {Filabadi, Milad Dehghani and Chen, Chen and Conejo, Antonio}
}

@inproceedings{zhang2019data,
  title={Data-driven feature description of heat wave effect on distribution system},
  author={Zhang, Yang and Mazza, Andrea and Bompard, Ettore and Roggero, Emiliano and Galofaro, Giuliana},
  booktitle={Proceedings of the 2019 IEEE Milan PowerTech},
  pages={1-6},
  year={2019},
  organization={IEEE}
}

@ARTICLE{mathaios2016,
  author={Panteli, Mathaios and Pickering, Cassandra and Wilkinson, Sean and Dawson, Richard and Mancarella, Pierluigi},
  journal={IEEE Transactions on Power Systems}, 
  title={Power System Resilience to Extreme Weather: Fragility Modeling, Probabilistic Impact Assessment, and Adaptation Measures}, 
  year={2017},
  volume={32},
  number={5},
  pages={3747-3757}
}

@article{wang2016resilience,
  title={Resilience enhancement with sequentially proactive operation strategies},
  author={Wang, Chong and Hou, Yunhe and Qiu, Feng and Lei, Shunbo and Liu, Kai},
  journal={IEEE Transactions on Power Systems},
  volume={32},
  number={4},
  pages={2847-2857},
  year={2016},
  publisher={IEEE}
}

@article{dall2017chance,
  title={Chance-constrained AC optimal power flow for distribution systems with renewables},
  author={Dall’Anese, Emiliano and Baker, Kyri and Summers, Tyler},
  journal={IEEE Transactions on Power Systems},
  volume={32},
  number={5},
  pages={3427-3438},
  year={2017},
  publisher={IEEE}
}

@article{andrianesis2021optimal,
  title={Optimal distributed energy resource coordination: A decomposition method based on distribution locational marginal costs},
  author={Andrianesis, Panagiotis and Caramanis, Michael and Li, Na},
  journal={IEEE Transactions on Smart Grid},
  volume={13},
  number={2},
  pages={1200-1212},
  year={2021},
  publisher={IEEE}
}

@article{bajpai2016novel,
  title={A novel metric to quantify and enable resilient distribution system using graph theory and choquet integral},
  author={Bajpai, Prabodh and Chanda, Sayonsom and Srivastava, Anurag K},
  journal={IEEE Transactions on Smart Grid},
  volume={9},
  number={4},
  pages={2918--2929},
  year={2016},
  publisher={IEEE}
}

@inproceedings{campbell2012weather,
  title={Weather-related power outages and electric system resiliency},
  author={Campbell, Richard J and Lowry, Sean},
  year={2012},
  organization={Congressional Research Service, Library of Congress Washington, DC}
}

@ARTICLE{hanchen2022,
   title = {A sequentially preventive model enhancing power system resilience against extreme-weather-triggered failures},
   journal = {Renewable and Sustainable Energy Reviews},
   volume = {156},
   pages = {111945},
   year = {2022},
   author = {Hanchen Liu and Chong Wang and Ping Ju and Hongyu Li}
}

@article{mieth2019online,
  title={Online learning for network constrained demand response pricing in distribution systems},
  author={Mieth, Robert and Dvorkin, Yury},
  journal={IEEE Transactions on Smart Grid},
  volume={11},
  number={3},
  pages={2563-2575},
  year={2019},
  publisher={IEEE}
}

@ARTICLE{emmanuel2017,
   title = {Evolution of dispatchable photovoltaic system integration with the electric power network for smart grid applications: A review},
   journal = {Renewable and Sustainable Energy Reviews},
   volume = {67},
   pages = {207-224},
   year = {2017},
   author = {Michael Emmanuel and Ramesh Rayudu}
}

@ARTICLE{yuqing2018,
   title = {Battery energy storage system size determination in renewable energy systems: A review},
   journal = {Renewable and Sustainable Energy Reviews},
   volume = {91},
   pages = {109-125},
   year = {2018},
   author = {Yuqing Yang and Stephen Bremner and Chris Menictas and Merlinde Kay}
}

@ARTICLE{nasif2016,
   title = {Review of control strategies for voltage regulation of the smart distribution network with high penetration of renewable distributed generation},
   journal = {Renewable and Sustainable Energy Reviews},
   volume = {64},
   pages = {582-595},
   year = {2016},
   author = {Nasif Mahmud and A. Zahedi}
}

@ARTICLE{jamshid2013,
   title = {Demand response in smart electricity grids equipped with renewable energy sources: A review},
   journal = {Renewable and Sustainable Energy Reviews},
   volume = {18},
   pages = {64-72},
   year = {2013},
   author = {Jamshid Aghaei and Mohammad-Iman Alizadeh}
}

@ARTICLE{hani2023,
title = {A weather-based power distribution system reliability assessment},
journal = {Alexandria Engineering Journal},
volume = {78},
pages = {256-264},
year = {2023},
author = {Hani A. Aldhubaib and Mohamed {Hassan Ahmed} and Magdy M.A. Salama}
}

@ARTICLE{scikitlearn,
  title={Scikit-learn: Machine Learning in {P}ython},
  author={Pedregosa, F. and Varoquaux, G. and Gramfort, A. and Michel, V. and Thirion, B. and Grisel, O. and Blondel, M. and Prettenhofer, P. and Weiss, R. and Dubourg, V. and Vanderplas, J. and Passos, A. and Cournapeau, D. and Brucher, M. and Perrot, M. and Duchesnay, E.},
  journal={Journal of Machine Learning Research},
  volume={12},
  pages={2825-2830},
  year={2011}
}

@INPROCEEDINGS{eckstrom2022outing,
  title={Outing power outages: real-time and predictive socio-demographic analytics for New York City},
  author={Eckstrom, Samuel and Murphy, Graham and Ye, Eileen and Acharya, Samrat and Mieth, Robert and Dvorkin, Yury},
  booktitle={Proceedings of the 2022 IEEE Power \& Energy Society General Meeting (PESGM)},
  pages={1-5},
  year={2022},
  organization={IEEE}
}

@INPROCEEDINGS{billinton2005consideration,
  author={Billinton, R. and Acharya, J.},
  booktitle={Proceedings of the Canadian Conference on Electrical and Computer Engineering, 2005.}, 
  title={Consideration of multi-state weather models in reliability evaluation of transmission and distribution systems}, 
  year={2005},
  pages={916-922}
}

@INPROCEEDINGS{miguel2016,
  author={Miguel A Ortega-Vazquez},
  booktitle={Proceedings of the 2016 IEEE Power and Energy Society General Meeting (PESGM)}, 
  title={Assessment of N-k contingencies in a probabilistic security-constrained optimal power flow}, 
  year={2016},
  pages={1-5}
}

@INPROCEEDINGS{billinton2006distribution,
  author={Billinton, Roy and Acharya, Janak},
  booktitle={Proceedings of the 41st International Universities Power Engineering Conference}, 
  title={Distribution System Reliability Assessment Incorporating Weather Effects}, 
  year={2006},
  volume={1},
  pages={282-286}
}

@INPROCEEDINGS{brown2004failure,
   author={Brown, R.E.},
   booktitle={Proceedings of the IEEE Power Engineering Society General Meeting, 2004.}, 
   title={Failure rate modeling using equipment inspection data}, 
   year={2004},
   pages={693-700 Vol.1}
}

@ONLINE{copernicus,
   author = {{Copernicus Climate Change}},
   title = "ERA5 hourly data on single levels from 1940 to present",
   year = "2024",
   url = "https://cds-beta.climate.copernicus.eu/",
   addendum = "(accessed: 08.01.2024)",
   keywords = "ERA5, climate data, Copernicus"
}

@ONLINE{nyiso,
   author = {{NYISO}},
   title = "NYISO Load Data",
   year = "2024",
   url = "https://www.nyiso.com/load-data",
   addendum = "(accessed: 08.01.2024)",
   keywords = "NYISO, load data, power sytem"
}

@MISC{tensorflow2015whitepaper,
   title={ {TensorFlow}: Large-Scale Machine Learning on Heterogeneous Systems},
   url={https://www.tensorflow.org/},
   note={Software available from tensorflow.org},
   author={Mart\'{i}n~Abadi and Ashish~Agarwal and Paul~Barham and Eugene~Brevdo and Zhifeng~Chen and Craig~Citro and Greg~S.~Corrado and Andy~Davis and Jeffrey~Dean and Matthieu~Devin and Sanjay~Ghemawat and Ian~Goodfellow and Andrew~Harp and Geoffrey~Irving and Michael~Isard and Yangqing Jia and Rafal~Jozefowicz and Lukasz~Kaiser and Manjunath~Kudlur and Josh~Levenberg and Dandelion~Man\'{e} and Rajat~Monga and Sherry~Moore and Derek~Murray and Chris~Olah and Mike~Schuster and Jonathon~Shlens and Benoit~Steiner and Ilya~Sutskever and Kunal~Talwar and Paul~Tucker and Vincent~Vanhoucke and Vijay~Vasudevan and Fernanda~Vi\'{e}gas and Oriol~Vinyals and Pete~Warden and Martin~Wattenberg and Martin~Wicke and Yuan~Yu and Xiaoqiang~Zheng},
  year={2015},
}

@misc{michael2018conceptual,
   title={A Conceptual Introduction to Hamiltonian Monte Carlo}, 
   author={Michael Betancourt},
   year={2018},
   eprint={1701.02434},
   archivePrefix={arXiv},
   primaryClass={stat.ME},
   url={https://arxiv.org/abs/1701.02434}
}

@misc{yuan2024markov,
   title={Markov chain Monte Carlo without evaluating the target: an auxiliary variable approach}, 
   author={Wei Yuan and Guanyang Wang},
   year={2024},
   eprint={2406.05242},
   archivePrefix={arXiv},
   primaryClass={stat.CO},
   url={https://arxiv.org/abs/2406.05242}
}

\end{document}